\theoremstyle{plain}
\newtheorem{theorem}{Theorem}[section]
\newtheorem{mtheorem}[theorem]{Main Theorem}
\newtheorem{corollary}[theorem]{Corollary}
\newtheorem{definition}[theorem]{Definition}
\newtheorem{example}[theorem]{Example}
\newtheorem{lemma}[theorem]{Lemma}
\newtheorem{notation}[theorem]{Notation}
\newtheorem{proposition}[theorem]{Proposition}
\newtheorem{remark}[theorem]{Remark}
\numberwithin{equation}{section}
\begin{document}
\title[Local Automorphisms]{Local automorphisms of finitary incidence algebras}

\author{Jordan Courtemanche}
\address[Jordan Courtemanche]{Department of Mathematics, Baylor University, One Bear Place \#97328, Waco, TX 76798-7328, USA}
\email{Jordan\_Courtemanche@baylor.edu.}

\author{Manfred Dugas}
\address[Manfred Dugas]{Department of Mathematics, Baylor University, One Bear Place \#97328, Waco, TX 76798-7328, USA}
\email{Manfred\_Dugas@baylor.edu}

\author{Daniel Herden}
\address[Daniel Herden]{Department of Mathematics, Baylor University, One Bear Place \#97328, Waco, TX 76798-7328, USA}
\email{Daniel\_Herden@baylor.edu}

\subjclass[2000] {16S60, 16S50, 16G20}
\keywords {local automorphisms, $n$-local automorphisms, finitary incidence algebras, cartesian products}
\thanks{Corresponding author: Manfred Dugas}

\begin{abstract}
Let $R$ be a commutative, indecomposable ring with identity and $(P,\le)$ a partially ordered set.
Let $FI(P)$ denote the finitary incidence algebra of $(P,\le)$ over $R$. We will
show that, in most cases, local automorphisms of $FI(P)$ are actually $R$-algebra automorphisms.
In fact, the existence of local automorphisms which fail to be $R$-algebra automorphisms will depend
on the chosen model of set theory and will require the existence of measurable cardinals. We will
discuss local automorphisms of cartesian products as a special case in preparation of the general
result.
\end{abstract}

\maketitle

\tableofcontents

\section{Introduction}

Let $R$ be a commutative ring with identity and $A$ some $R$-algebra. Let $n$
denote a positive integer. The $R$-linear map $\eta:A\rightarrow A$ is called an
\emph{$n$-local automorphism} if for any elements $a_{i}\in A$, $1\leq i\leq n$,
there exists some $R$-algebra automorphism $\varphi: A\rightarrow A$
such that $\eta(a_{i})=\varphi(a_{i})$ for all $1\leq i\leq n$. Any $1$-local
automorphism is simply called a \emph{local automorphism}. Local automorphisms of algebras
have attracted attention over the years. For example, in 1990, Larson and
Sourour \cite{LS} proved that if $X$ is an
infinite-dimensional Banach space and $\eta$ is a local automorphism of
$\mathfrak{B}(X)$, the algebra of all bounded linear operators on $X$, then
$\eta$ is an automorphism of $\mathfrak{B}(X)$. In 1999, Crist \cite{C} studied the
local automorphisms of finite dimensional CSL algebras. In 2003, Hadwin and Li \cite{HL}
showed that any surjective $2$-local automorphism of a nest algebra $\mathcal{A}$
is an automorphism. We refer to the introductions and references of these three
papers for more details on the history and results regarding local automorphisms of algebras.
For further reading, see also \cite{BS1,BS2,S}.

From now on, we will always assume that the commutative ring $R$ is
indecomposable, i.e., $0$ and $1$ are the only idempotent elements of $R$. Let
$(P,\le)$ be a poset. Generalizing the notion of the incidence algebra \cite{R,SComb} of a
locally finite poset $(P,\le)$ over $R$, Khripchenko and Novikov \cite{KN} introduced the concept
of the \emph{finitary incidence algebra} $FI(P)$ for arbitrary posets $(P,\le)$ in 2009. One year
later, Khripchenko showed that the algebra $FI(P)$ has exactly the same kind of
automorphisms as in the case of a locally finite poset $(P,\le)$, cf. \cite{Khrip,Stanley}.
Our goal is to determine the local automorphisms of $FI(P)$. The proof will introduce a number
of new ideas and concepts for finitary incidence algebras, combining methods from set theory,
combinatorics and algebra.

We will associate a cardinal $\mu_{R}$ to the ring $R$ as follows: If $R$ is
finite, then $\mu_{R}=\aleph_{0}$. If $R$ is infinite, then $\mu_{R}$ is the
least cardinal that allows a non-trivial $\left\vert R\right\vert ^{+}$-additive
measure with values $0$ and $1$. Let $\operatorname{Aut}(A)$ denote the group of all $R$-algebra
automorphisms of the $R$-algebra $A$, which is a subset of $\operatorname{LAut}(A)$,
the set of all local automorphisms of $A$. Our main result is the following:

\begin{mtheorem} \label{thm:main0}
Let $(P,\le)$ be a poset and $R$ an indecomposable ring.
\begin{itemize}
\item[(a)] If $\left\vert P\right\vert <\mu_{R}$, then $\operatorname{LAut}(FI(P))=\operatorname{Aut}(FI(P))$.
\item[(b)] If $\left\vert P\right\vert \geq\mu_{R}$, then non-surjective local automorphisms may exist,
and examples of local automorphisms which are not $R$-algebra automorphisms are provided.
\item[(c)] If $\eta\in \operatorname{LAut}(FI(P))$ is surjective and $\left\vert R\right\vert \geq 3$, then $\eta\in \operatorname{Aut}(FI(P))$.
\end{itemize}
\end{mtheorem}

Let $\mu_0$ denote the least measurable cardinal, if it exists. Measurable cardinals are large cardinals
known to not exist in many set-theoretic models of ZFC, for example $V=L$,
and it cannot be proven that the existence of measurable
cardinals is consistent with ZFC. We refer to \cite{Jech} for more details on
measurable cardinal. For $\aleph_{0}\leq\left\vert R\right\vert <\mu_0$, we have $\mu_{R}=\mu_0$,
and we note the following immediate consequence of Theorem \ref{thm:main0}(a).

\begin{corollary}
Let $R$ be an infinite indecomposable ring and $(P,\le)$ a poset such that
$\left\vert P\right\vert <\mu_0$. Then $\operatorname{LAut}(FI(P))=\operatorname{Aut}(FI(P))$.
\end{corollary}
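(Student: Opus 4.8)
The plan is to derive this corollary directly from part~(a) of Theorem~\ref{thm:main0}, which asserts $\operatorname{LAut}(FI(P))=\operatorname{Aut}(FI(P))$ as soon as $|P|<\mu_R$. Thus the only thing that needs checking is that the hypothesis $|P|<\mu_0$ guarantees $|P|<\mu_R$; equivalently, that $\mu_R\ge\mu_0$ for every infinite indecomposable ring $R$. Once this inequality between cardinals is in hand, the corollary follows at once.

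To establish $\mu_R\ge\mu_0$ I would argue by cases on $|R|$. Recall that, $R$ being infinite, $\mu_R$ is the least cardinal $\lambda$ carrying a nontrivial $|R|^+$-additive two-valued measure on $\mathcal{P}(\lambda)$. In the case $|R|<\mu_0$: a nontrivial $|R|^+$-additive two-valued measure is in particular nontrivial and countably additive (since $R$ infinite gives $|R|^+\ge\aleph_1$), so by the classical result of Ulam the least cardinal supporting one is measurable, whence no cardinal strictly below $\mu_0$ can support such a measure and $\mu_R\ge\mu_0$. (The reverse inequality also holds, giving $\mu_R=\mu_0$ as remarked above: $\mu_0$ is measurable, hence inaccessible, so $|R|^+\le\mu_0$ and the $\mu_0$-additive witnessing measure is a fortiori $|R|^+$-additive.) In the case $|R|\ge\mu_0$: if $\lambda$ carries a nontrivial $|R|^+$-additive two-valued measure, then every singleton is null and, by closure under unions of at most $|R|$ null sets, $\lambda$ itself would be null unless $\lambda>|R|$; hence $\mu_R>|R|\ge\mu_0$.

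In either case $\mu_R\ge\mu_0>|P|$, so Theorem~\ref{thm:main0}(a) applies and yields $\operatorname{LAut}(FI(P))=\operatorname{Aut}(FI(P))$, completing the argument. (If one works in a model with no measurable cardinal, then reading $\mu_0=\infty$ the hypothesis $|P|<\mu_0$ imposes no restriction and $\mu_R$ is correspondingly unbounded, so the conclusion still holds.)

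Since this is purely a matter of unwinding the definition of $\mu_R$ on top of Theorem~\ref{thm:main0}(a), there is no serious obstacle; the only nonroutine ingredient is the appeal to Ulam's theorem that the least cardinal admitting a nontrivial countably additive two-valued measure is measurable, together with the elementary observation that inaccessibility of $\mu_0$ lets $|R|^+$-additivity be inherited from $\mu_0$-additivity when $|R|<\mu_0$.
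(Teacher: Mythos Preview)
Your proposal is correct and follows exactly the paper's approach: the corollary is presented as an immediate consequence of Theorem~\ref{thm:main0}(a) via the inequality $\mu_R\ge\mu_0$ for infinite $R$. The paper only spells out the case $\aleph_0\le|R|<\mu_0$ (where $\mu_R=\mu_0$) in the sentence preceding the corollary, while you also handle $|R|\ge\mu_0$ explicitly; the latter is covered in the paper by the observation $\mu_R>|R|$ in Remark~\ref{rem:muRprop}(1), so your argument is simply a slightly more detailed unpacking of the same reasoning.
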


If $(P,\le)$ is a trivial poset, i.e., the partial order is just equality, then
$\Pi=FI(P)=\prod_{x\in P} Re_{x}$ is just the full cartesian product
of $\left\vert P\right\vert $ copies of $R$. Note that for any poset $(P,\le)$,
the $R$-algebra $\Pi$ is naturally an epimorphic image of $FI(P)$. This motivates
the study of $\operatorname{Aut}(\Pi)$ and $\operatorname{LAut}(\Pi)$ as preparation
for a proof of Theorem \ref{thm:main0}.

For any permutation $\rho: P \to P$, the map $\widehat{\rho}:\Pi \rightarrow\Pi$ with
$\widehat{\rho}\left({\sum}_{x\in P}r_{x}e_{x}\right)={\sum}_{x\in P}r_{x}e_{\rho(x)}$ is in
$\operatorname{Aut}(\Pi)$. Our starting point will be the simple observation that every $R$-algebra
automorphisms of $\Pi$ arises like this from a suitable permutation $\rho$ of $P$.

\begin{itemize}
\item[] {\bf Proposition \ref{prop:inducedauto}.}
\emph{$\operatorname{Aut(\Pi)} =\{ \widehat\rho: \rho \mbox{ is a permutation of $P$}\}.$}
\end{itemize}

From here we set out to discuss local automorphisms of $\Pi$ and their properties.
By nature, local automorphisms will be very close to $R$-algebra automorphisms, and the question
whether, or when, $\operatorname{Aut}(\Pi)=\operatorname{LAut}(\Pi)$ holds will be pursued systematically
in Section \ref{sec:cartprod}. For $2$-local automorphisms we will show the following.

\begin{itemize}
\item[] {\bf Corollary \ref{cor:sur2localauto}.}
\emph{If $\eta \in \operatorname{LAut}(\Pi)$ is a surjective 2-local automorphism, then $\eta \in \operatorname{Aut}(\Pi)$.}
\end{itemize}

In addition, we have a correlating result for local automorphisms.

\begin{itemize}
\item[] {\bf Theorem \ref{thm:surjectivelautisaut}.}
\emph{Let $\left\vert R\right\vert \geq3$ and $\eta\in \operatorname{LAut}(\Pi)$ such that
$e_{x}\in\operatorname{Im}(\eta)$, the image of $\eta$, for all $x\in P$. Then $\eta\in \operatorname{Aut}(\Pi)$.}
\end{itemize}

Thus, under some weak assumption of surjectivity, local automorphisms indeed coincide with $R$-algebra automorphisms.
The case $\left\vert R\right\vert =2$, i.e., $R=\mathbb{Z}_2$, however, is special.

\begin{itemize}
\item[] {\bf Theorem \ref{thm:R=2nonsurjlaut}.}
\emph{Let $R=\mathbb{Z}_2$ and $\kappa=\left\vert P\right\vert $ an infinite
cardinal. Then there exist $2^{2^{\kappa}}$-many non-surjective local
automorphisms of $\Pi$, as well as $2^{2^{\kappa}}$-many bijective local
automorphisms of $\Pi$ that {\bf are not} $R$-algebra automorphisms of $\Pi$.}
\end{itemize}

Our next main result shows that ``$n$-local'' does, in general, not imply ``surjective'' for local automorphisms:

\begin{itemize}
\item[] {\bf Lemma \ref{lem:finiteRnonsurjective}.}
\emph{Let $R$ be finite and $P$ be countably infinite. Then there exists some
$\eta\in \operatorname{LAut}(\Pi)$ such that $\eta$ is an $n$-local automorphism for all natural numbers $n$,
but $\eta$ is {\bf not} surjective. In particular, $\eta\notin \operatorname{Aut}(\Pi)$.}
\end{itemize}

This result holds independently of the chosen model of ZFC and generalizes to infinite rings $R$ as follows.

\begin{mtheorem} \label{thm:main1}\mbox{}
\begin{itemize}
\item[(a)] $\operatorname{LAut}(\Pi)=\operatorname{Aut}(\Pi)$ if and only if $\left\vert P\right\vert <\mu_{R}$. Moreover,
\item[(b)] Let $\left\vert P\right\vert \geq\mu_{R}$. Then there exists some unital
$R$-linear map $\eta$ such that $\eta$ is an $n$-local automorphism for all
natural numbers $n$, but $\eta$ is {\bf not} surjective. In particular, $\eta\notin \operatorname{Aut}(\Pi)$.
\end{itemize}
\end{mtheorem}

This theorem is notable, as it links the existence of nontrivial local automorphisms $\eta\notin \operatorname{Aut}(\Pi)$ to the existence
of measurable cardinals. Therefore, the existence of nontrivial local automorphisms depends on the chosen model of set theory.

Theorem \ref{thm:main1} extends to $\operatorname{End}(\Pi)$, the set of $R$-algebra endomorphisms of $\Pi$.
For a subset $I$ of $P$, we define $e_{I}={\sum}_{x\in I}e_{x}$, and we call $\eta\in \operatorname{End}(\Pi)$
\emph{induced} if there is a family $\{A_{x}:x\in P\}$ of pairwise disjoint subsets of $P$, such that
$\eta({\sum}_{x\in P}a_{x}e_{x})  ={\sum}_{x\in P}a_{x}e_{A_{x}}$ for all ${\sum}_{x\in P}a_{x}e_{x}\in\Pi$. We have
the following result.

\begin{itemize}
\item[] {\bf Theorem \ref{thm:mainendo}.}
\emph{Let $R$ be a field. Then every $\eta\in \operatorname{End}(\Pi)$ is induced if and
only if $\left\vert P\right\vert <\mu_{R}$.}
\end{itemize}

Theorem \ref{thm:main1} will serve as a stepping stone for proving Theorem \ref{thm:main0} in Section~\ref{sec:fips}.
With Section \ref{sec:fipsdef} we include a short overview of finitary incidence algebras. Our notation is standard,
cf. \cite{SOD}.

\section{Finitary incidence algebras} \label{sec:fipsdef}

For the convenience of the reader, we recall some definitions and results.
Let $(P,\leq)$ denote a poset and $R$ a commutative ring with $1$.

\begin{definition}[Khripchenko, Novikov, \protect\cite{KN}]
\mbox{}

\begin{itemize}
\item[(a)] We call $I(P)=\prod_{x,y\in P,\,x\leq y}Re_{xy}$ the induced
\emph{incidence space }of $(P,\le)$ over $R$.
$I(P)$ is an $R$-module under componentwise addition and scalar
multiplication.

\item[(b)] Let
\begin{align*}
& FI(P)=\{\,a=\sum\limits_{x\leq y}a_{xy}e_{xy}\in I(P): \\
& \{(u,v):x\leq u<v\leq y\mbox{ and }a_{uv}\not=0\}
\mbox{ is finite for all }x<y\}.
\end{align*}
Then $FI(P)$ is an $R$-algebra, the induced \emph{finitary incidence algebra of}
\mbox{$(P,\le)$} over $R$. The multiplication (of formal sums) is induced by
\[
e_{xu}e_{vy}=\left\{
\begin{array}{rl}
e_{xy}, & \mbox{ if }x\leq u=v\leq y, \\
0, & \mbox{ otherwise.}
\end{array}
\right.
\]
\end{itemize}
\end{definition}

Note that $I(P)$ is a bimodule over the ring $FI(P)$ and
$a=\sum_{x\leq y}a_{xy}e_{xy}\in FI(P)$ is a unit in $FI(P)$ if and only if all
coefficients $a_{xx}$ are units in $R$. Moreover,
\[
Z:=Z(FI(P))=\{
a=\textstyle \sum_{x\leq y}a_{xy}e_{xy}\in FI(P):a_{xx}=0\text{ \textit{for all }} x\in P\}
\]
is a two-sided ideal of $FI(P)$ and $FI(P)/Z\cong \prod_{x\in P}Re_{xx}$.

\begin{definition}
\mbox{}
\begin{itemize}
\item[(a)] For any invertible $f\in FI(P)$ let $\psi _{f}$ be the induced
\emph{inner automorphism}, and $\operatorname{Inn}(FI(P))$ denotes the set of all inner
automorphisms of $FI(P)$. Note that $\operatorname{Inn}(FI(P))$ is a normal subgroup of
$\operatorname{Aut}(FI(P))$.

\item[(b)] For any order automorphism $\rho $ of $(P,\leq )$ let $\widehat{\rho }$
denote the induced $R$-algebra automorphism of $FI(P)$ with
\[
\widehat{\rho }\left( \sum_{x\leq y}a_{xy}e_{xy}\right) =\sum_{x\leq
y}a_{xy}e_{\rho (x)\rho (y)}.
\]
Let $\operatorname{Aut}(P)=\{\widehat{\rho}:\rho \mbox{ order automorphism of } P\}$,
a subgroup of $\operatorname{Aut}(FI(P))$.
\item[(c)] Given units $\sigma _{xy}\in R$ with $\sigma _{xy}\sigma
_{yz}=\sigma _{xz}$ for all $x\leq y\leq z$ let $M_{\sigma }$ denote the
induced $R$-algebra automorphism \emph{(Schur multiplication)} of $FI(P)$,
\[
M_{\sigma }\left( \sum_{x\leq y}a_{xy}e_{xy}\right) =\sum_{x\leq
y}a_{xy}\sigma _{xy}e_{xy}.
\]
Let $\operatorname{Mult}(FI(P))$ denote the set of all Schur multiplications of $%
FI(P)$. Then $\operatorname{Mult}(FI(P))$ is a subgroup of $\operatorname{Aut}(FI(P))$.
\end{itemize}
\end{definition}

Quite frequently, we will apply the following result:

\begin{theorem}[Khripchenko, \protect\cite{Khrip}]
\label{thm:charaut} Let $(P,\leq )$ be a poset and $R$ a commutative ring
with $1$. If $R$ is indecomposable, then every $\varphi \in \operatorname{Aut}(FI(P))$
decomposes canonically:
\[
\varphi =\psi _{f}\circ M_{\sigma }\circ \widehat{\rho }.
\]
\end{theorem}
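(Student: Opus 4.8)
The plan is to factor an arbitrary $\varphi\in\operatorname{Aut}(FI(P))$ by peeling off, in turn, an inner automorphism, an order automorphism, and a Schur multiplication, using that the diagonal idempotents $e_{xx}$ form a distinguished family to which $\varphi$ must conform. \textbf{Step 1 (straightening the diagonal idempotents).} Each $e_{xx}$ is a primitive idempotent, since $e_{xx}FI(P)e_{xx}=Re_{xx}\cong R$ and $R$ is indecomposable; hence each $\varphi(e_{xx})$ is a primitive idempotent of $FI(P)$. Its diagonal part $\sum_u(\varphi(e_{xx}))_{uu}e_{uu}$ is an idempotent of $\prod_u Re_{uu}$, and indecomposability of $R$ forces every coefficient into $\{0,1\}$; write this part as $e_{S_x}$ with $S_x=\{u:(\varphi(e_{xx}))_{uu}=1\}$. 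Orthogonality of $\{\varphi(e_{xx})\}$ makes the $S_x$ pairwise disjoint, and — once the summability issue discussed below is settled — their union is $P$. One then produces a single unit $g\in FI(P)$ with $g\,e_{S_x}g^{-1}=\varphi(e_{xx})$ for all $x$ at once; the natural candidate $g=\sum_x\varphi(e_{xx})e_{S_x}=1+\sum_x\bigl(\varphi(e_{xx})-e_{S_x}\bigr)e_{S_x}$ has its correction term in the ideal $Z$, so $g$ has unit diagonal and is invertible — provided that correction term genuinely satisfies the finitariness condition. Absorbing $\psi_{g^{-1}}$ into $\varphi$, we may assume $\varphi(e_{xx})=e_{S_x}$; primitivity of $e_{S_x}=\varphi(e_{xx})$ then forces $|S_x|=1$, say $S_x=\{\rho(x)\}$, and $\rho\colon P\to P$ is a bijection.

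\textbf{Step 2 ($\rho$ respects the order).} For any $a,b\in P$ one has $e_{aa}FI(P)e_{bb}=Re_{ab}$ if $a\le b$ and $0$ otherwise, so $e_{aa}FI(P)e_{bb}\neq0\iff a\le b$. Applying $\varphi$ and using surjectivity, $e_{\rho(x)\rho(x)}FI(P)e_{\rho(y)\rho(y)}=\varphi\bigl(e_{xx}FI(P)e_{yy}\bigr)$, whence $\rho(x)\le\rho(y)\iff x\le y$. Thus $\rho$ is an order automorphism, $\widehat\rho\in\operatorname{Aut}(FI(P))$, and $\chi:=\widehat\rho^{-1}\circ\varphi$ fixes every $e_{xx}$.

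\textbf{Step 3 (the remainder is a Schur multiplication, and reassembly).} Since $\chi(e_{xx})=e_{xx}$, for every $a=\sum a_{pq}e_{pq}\in FI(P)$ and every $p\le q$ we extract $e_{pp}\chi(a)e_{qq}=\chi(e_{pp}ae_{qq})=a_{pq}\chi(e_{pq})$, and $\chi(e_{pq})\in e_{pp}FI(P)e_{qq}=Re_{pq}$, say $\chi(e_{pq})=\sigma_{pq}e_{pq}$; hence the $(p,q)$-coefficient of $\chi(a)$ is $\sigma_{pq}a_{pq}$, i.e.\ $\chi=M_\sigma$ coefficientwise (this step needs no continuity hypothesis). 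Multiplicativity of $\chi$ on the $e_{pq}$ gives $\sigma_{pz}\sigma_{zy}=\sigma_{py}$ for $p\le z\le y$, and the same analysis applied to $\chi^{-1}$ (which also fixes the $e_{xx}$) shows each $\sigma_{pq}$ is a unit of $R$, so $M_\sigma\in\operatorname{Mult}(FI(P))$. Therefore $\varphi=\widehat\rho\circ M_\sigma$; restoring the inner automorphism removed in Step 1 and using that $\operatorname{Inn}(FI(P))$ is normal together with $\widehat\rho\circ M_\sigma\circ\widehat\rho^{-1}=M_{\sigma'}$ (again a Schur multiplication, $\sigma'_{uv}=\sigma_{\rho^{-1}(u)\rho^{-1}(v)}$), we rearrange to the canonical form $\varphi=\psi_f\circ M_{\sigma'}\circ\widehat\rho$. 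Uniqueness of the three factors follows by reading off $\rho$ from the action on the diagonal idempotents and then $\sigma$ from the off-diagonal coefficients.

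\textbf{The main obstacle} lies entirely in Step 1: controlling how an arbitrary automorphism interacts with the infinitary (summability) structure of $FI(P)$. One must know that $\{\varphi(e_{xx})\}_{x\in P}$ is a summable family with sum $1$ — so that the $S_x$ cover $P$ and the correction term defining $g$ vanishes off $\bigcup_x S_x$ — and that $g=1+w$ with $w\in Z$ is itself a finitary element of $FI(P)$. The natural remedy is to first establish that every automorphism of $FI(P)$ is continuous in the coefficientwise topology, equivalently that it preserves the ideal $Z$ and sends summable families to summable families with the same sum. A useful ingredient here is that on each interval $[x,y]$ the restriction of $Z$ consists of nilpotent elements of bounded index — the nonzero off-diagonal coefficients of a finitary element span a finite directed graph whose path lengths are bounded — which yields an intrinsic, automorphism-invariant description of $Z$. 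With continuity in hand, Steps 1–3 proceed as above.
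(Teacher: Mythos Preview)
The paper does not prove Theorem~\ref{thm:charaut}; it is quoted from Khripchenko~\cite{Khrip} without argument. So there is no ``paper's proof'' to compare against directly. That said, the paper's Section~4 carries out essentially the same three-step peeling for the harder class of \emph{local} automorphisms (Theorem~\ref{thm:inducedrho}, Lemma~\ref{lem:beta2}, Theorem~\ref{thm:step3}), and your outline matches that architecture: reduce to $\varphi$ acting trivially modulo $Z$, conjugate the primitive idempotents back to the diagonal, and read off a Schur multiplication on the $e_{xy}$. Your Steps~2 and~3 are fine (the corner argument $e_{xx}FI(P)e_{yy}\neq 0\iff x\le y$ is exactly right, and the cocycle identity for $\sigma$ drops out as you say).

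The genuine gap is the one you flag yourself, but your proposed remedy does not close it. First, ``on each interval the restriction of $Z$ consists of nilpotents of bounded index'' does not yield an automorphism-invariant description of $Z$: intervals are not a priori preserved by $\varphi$, and globally elements of $Z$ need not be nilpotent, so you cannot identify $Z$ with the Jacobson radical unless $R$ is semiprimitive. Second, even granting $\varphi(Z)=Z$ (equivalently, that $\varphi$ is continuous for the coefficientwise topology), this does not by itself make $g=\sum_x\varphi(e_{xx})e_{S_x}$ finitary: you must still argue that on each interval $[a,b]$ only finitely many of the off-diagonal coefficients of the individual $\varphi(e_{xx})$ survive. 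Concretely, writing $g_{uv}=(\varphi(e_{x(v)}))_{uv}$ with $v\in S_{x(v)}$, the danger is that infinitely many distinct $x(v)$ contribute nonzero entries inside a single $[a,b]$, and no amount of ``$\varphi$ preserves $Z$'' rules that out. The paper's Lemma~\ref{lem:beta2} shows precisely how this finitariness is won (for the harder local case): one exploits orthogonality relations of the form $\eta(e_x)\eta(e_Y)=\eta(e_x)$ for carefully chosen infinite $Y$ (Theorem~\ref{thm:idempow6}) and a combinatorial pigeonhole on the set of offending pairs $(u,v)$. For genuine automorphisms Khripchenko's argument is similar in spirit; you should either reproduce that combinatorial step or cite it, rather than appeal to continuity.
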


For the original result on incidence algebras, cf. \cite{SOD,Stanley}.\medskip

The theorem implies that $\operatorname{Aut}(FI(P))$ is a product of the subgroups
$\operatorname{Inn}(FI(P))$, $\operatorname{Mult}(FI(P))$ and $\operatorname{Aut}(P)$.
It follows from the definition, that $\operatorname{Aut}(P)$ normalizes $\operatorname{Mult}(FI(P))$,
which implies that $\operatorname{Aut}(FI(P))$ is the product of these three subgroups in
any order of the factors. Note that, in general,
\[
\operatorname{Inn}(FI(P))\cap \operatorname{Mult}(FI(P))\neq \{\operatorname{id}_{FI(P)}\},
\]
but $\operatorname{Aut}(FI(P))$ is a semidirect product of $\operatorname{Inn}(FI(P)) \operatorname{Mult}(FI(P))$ by $\operatorname{Aut}(P)$.
Moreover, the subgroup
\[
\operatorname{Inn}(FI(P)) \operatorname{Mult}(FI(P))\subseteq \operatorname{Aut}(FI(P))
\]
corresponds to those $R$-algebra automorphisms of $FI(P)$ which induce the identity map on $FI(P)/Z$.
One of the many consequences is that the ideal $Z$ is invariant under all $R$-algebra automorphisms of $FI(P)$.

Let $D(P)=FI(P)\oplus I(P)$ denote the idealization (Dorroh extension) of $I(P)$ by $FI(P)$.
A complete description of $\operatorname{Aut}(D(P))$ was obtained in \cite{DW}.

\section{Local automorphisms of cartesian products} \label{sec:cartprod}

In Section \ref{sec:cartprod} we will investigate the structure of local automorphisms
and $R$-algebra endomorphisms of cartesian products. We fix the following notations and definitions.

\begin{notation}
Let $P$ be a set and $R$ a commutative, indecomposable ring, i.e., $0$ and $1$
are the only idempotent elements of the ring $R$. Let $\Pi=\prod_{x\in P}Re_{x}$
be the cartesian product. Then $\Pi$ is an $R$-algebra, and by
$\operatorname{End}(\Pi)$ and $\operatorname{Aut}(\Pi)$ we denote the set of all $R$-algebra
endomorphisms and automorphisms, respectively, of $\Pi$.
We will associate any element $a \in \Pi$ with its standard representation $a=\sum_{x\in P}a_{x}e_{x}$.
For $X\subseteq P$ define $e_{X}=\sum_{x\in X}e_{x}$.
Let $\rho :P\rightarrow P$ be any map. Define $\widehat{\rho}:\Pi\rightarrow\Pi$ by
$\widehat{\rho}\:(\sum_{x\in P}a_{x}e_{x})=\sum_{x\in P}a_{x}e_{\rho(x)}$ for all $a\in \Pi$.
It is readily verified that $\widehat{\rho}$ is an $R$-algebra endomorphism of $\Pi$ if and
only if $\rho$ is injective.
Note that $\widehat{\rho}$ may not be unital.
\end{notation}

We record a first easy observation on $\widehat \rho$.

\begin{proposition} \label{prop:inducedauto}
Let $R$ be an indecomposable ring. Then for $\Pi=\prod_{x\in P}Re_{x}$ holds
\[
\operatorname{Aut(\Pi)} =\{ \widehat\rho: \rho \mbox{ is a permutation of $P$}\}.
\]
\end{proposition}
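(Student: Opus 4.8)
The plan is to show both inclusions. The inclusion $\{\widehat\rho:\rho\text{ a permutation of }P\}\subseteq\operatorname{Aut}(\Pi)$ is immediate: if $\rho$ is a bijection then $\widehat\rho$ is multiplicative and $R$-linear by inspection, it sends the identity $e_P=\sum_{x\in P}e_x$ to $\sum_{x\in P}e_{\rho(x)}=e_P$, hence is unital, and $\widehat{\rho^{-1}}$ is a two-sided inverse. So the work is in the reverse inclusion: given $\varphi\in\operatorname{Aut}(\Pi)$, produce a permutation $\rho$ with $\varphi=\widehat\rho$.

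First I would recover the permutation from the action of $\varphi$ on idempotents. The key point is that the $e_x$ are exactly the \emph{primitive} idempotents of $\Pi$: an idempotent $\varepsilon=\sum_x\varepsilon_x e_x$ has each $\varepsilon_x$ idempotent in $R$, hence (by indecomposability of $R$) each $\varepsilon_x\in\{0,1\}$, so idempotents of $\Pi$ correspond bijectively to subsets of $P$ via $X\mapsto e_X$; and $e_X$ is primitive (nonzero, not a sum of two nonzero orthogonal idempotents) precisely when $|X|=1$. Since $\varphi$ is an algebra automorphism it permutes the primitive idempotents, so there is a permutation $\rho:P\to P$ with $\varphi(e_x)=e_{\rho(x)}$ for all $x\in P$. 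More generally, because $\varphi$ preserves the lattice of idempotents (orthogonality, sums), $\varphi(e_X)=e_{\rho(X)}$ for every $X\subseteq P$; in particular applying this to complements, $\varphi(e_{P\setminus X})=e_{P\setminus\rho(X)}$.

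Next I would pin down $\varphi$ on arbitrary elements. Fix $a=\sum_x a_x e_x\in\Pi$ and set $b=\varphi(a)=\sum_y b_y e_y$; I want $b_{\rho(x)}=a_x$ for all $x$, which is exactly the assertion $\varphi=\widehat\rho$. To see this, note $a\,e_x=a_x e_x$, so applying $\varphi$ gives $b\,e_{\rho(x)}=\varphi(a_x e_x)=a_x e_{\rho(x)}$ (using $R$-linearity for the scalar $a_x$), and the left side equals $b_{\rho(x)}e_{\rho(x)}$; comparing the $\rho(x)$-coordinate yields $b_{\rho(x)}=a_x$. Since $\rho$ is onto, this determines every coordinate of $b$, giving $\varphi(a)=\sum_x a_x e_{\rho(x)}=\widehat\rho(a)$. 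Hence $\varphi=\widehat\rho$, completing the proof.

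I expect the only genuinely delicate point to be the identification of the primitive idempotents of $\Pi$ and the resulting bijection between idempotents and subsets of $P$ — this is where indecomposability of $R$ is used, and where one must be careful that $\Pi$ is a full (not restricted) product so that \emph{every} subset $X$, including infinite ones, gives an idempotent $e_X$. Everything after that is a short bookkeeping argument with the relations $a e_x=a_x e_x$, and no convergence or finiteness issues arise since all computations take place coordinatewise in $\Pi$.
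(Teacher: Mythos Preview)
Your proposal is correct and follows essentially the same approach as the paper: identify the $e_x$ as the primitive idempotents of $\Pi$ (using indecomposability of $R$), deduce that an automorphism $\varphi$ permutes them via some $\rho$, and then read off every coordinate of $\varphi(a)$ from the relation $\varphi(a)e_{\rho(x)}=\varphi(ae_x)=a_x e_{\rho(x)}$. Your write-up is slightly more explicit about why the $e_x$ are precisely the primitive idempotents, and your aside on $\varphi(e_X)=e_{\rho(X)}$ is not needed for the argument, but otherwise the proofs match.
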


\begin{proof}
If $\rho$ is a permutation of $P$, then $\widehat{\rho^{-1}}=\widehat{\rho}^{-1}$, and
thus $\widehat{\rho}$ is an automorphism.
Now assume that $\eta\in \operatorname{Aut(\Pi)}$ is an automorphism and let $E=\{e_{x}:x\in P\}$. Then $E$
is the set of all primitive idempotents of $\Pi$ and thus $\eta(E)\subseteq E$
as well as $\eta^{-1}(E)\subseteq E$. This shows that
${\eta\upharpoonright}_E: E\rightarrow E$ is bijective and we infer that there exists a permutation
$\rho$ of $P$ such that $\eta(e_{x})=e_{\rho(x)}$ for all $x\in P$. Let
$a=\sum_{x\in P}a_{x}e_{x}\in\Pi$. Then
\[
\eta (a) e_{\rho(x)} =\eta(a)\eta(e_{x}) =\eta(a e_{x}) =\eta(a_x e_{x}) =a_x\eta(e_{x}) =a_{x}e_{\rho(x)}
\]
for all $x\in P$. Thus $a_{x}$ is the entry of the $e_{\rho(x)}$-coordinate of $\eta(a)$.
This shows that $\eta=\widehat{\rho}$.
\end{proof}

We continue with the central definition of this section.

\begin{definition} \mbox{}
\begin{itemize}
\item[(a)] An $R$-linear map $\eta: FI(P) \rightarrow FI(P)$ is called an \emph{$n$-local automorphism}
if for every choice of $a_i \in FI(P)$, $1\le i\le n$, there exists some $R$-algebra automorphism
$\varphi \in \operatorname{Aut}(FI(P))$ with $\eta(a_i)=\varphi(a_i)$ for all $1\le i\le n$.
\item[(b)] We will refer to $1$-local automorphisms simply as \emph{local automorphisms}.
With $\operatorname{LAut}(FI(P))$ we denote the set of local automorphisms.
\end{itemize}
\end{definition}

\begin{remark}
Every $R$-algebra automorphism is an $n$-local automorphism for all $n>0$.
However, a local automorphism does not need to be an $R$-algebra homomorphism.
Clearly, $n$-local automorphisms will preserve multiplication for $n\ge 3$.
\end{remark}

We note some general properties of local automorphisms.

\begin{proposition} \label{prop:triviallautoproperties}
Let $\eta$ be a local automorphism of an $R$-algebra $A$. Then the following holds:
\begin{itemize}
\item[(a)] $\eta$ is injective.
\item[(b)] $\eta$ preserves idempotents.
\item[(c)] $\eta$ preserves primitive idempotents.
\end{itemize}
\end{proposition}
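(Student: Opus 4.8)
The plan is to verify the three claims in increasing order of difficulty, using only the defining property of a local automorphism: for each $a \in A$ there is some $\varphi \in \operatorname{Aut}(A)$ (depending on $a$) with $\eta(a) = \varphi(a)$. For \textbf{(a)}, suppose $\eta(a) = 0$. Pick $\varphi \in \operatorname{Aut}(A)$ with $\eta(a) = \varphi(a)$; then $\varphi(a) = 0$, and since $\varphi$ is a bijection this forces $a = 0$. Hence $\eta$ is injective.

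For \textbf{(b)}, let $a \in A$ be an idempotent, $a^2 = a$, and choose $\varphi \in \operatorname{Aut}(A)$ with $\eta(a) = \varphi(a)$. Since $\varphi$ is an $R$-algebra homomorphism, $\eta(a)^2 = \varphi(a)^2 = \varphi(a^2) = \varphi(a) = \eta(a)$, so $\eta(a)$ is again idempotent. For \textbf{(c)}, let $a$ be a primitive idempotent. By (b), $\eta(a)$ is idempotent; we must show it is primitive, i.e. that it cannot be written as a sum of two nonzero orthogonal idempotents. Choose $\varphi \in \operatorname{Aut}(A)$ with $\eta(a) = \varphi(a)$. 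Since $\varphi$ is an automorphism, it carries primitive idempotents to primitive idempotents (if $\varphi(a) = f + g$ with $f,g$ nonzero orthogonal idempotents, then $a = \varphi^{-1}(f) + \varphi^{-1}(g)$ is such a decomposition of $a$, contradicting primitivity). Hence $\eta(a) = \varphi(a)$ is primitive.

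The only subtlety worth flagging is in (c): one should make sure the notion of ``primitive idempotent'' being used is the algebraic one (a nonzero idempotent not expressible as a sum of two nonzero orthogonal idempotents), which is manifestly preserved under $R$-algebra automorphisms; no ring-theoretic hypotheses on $A$ beyond those already standing are needed. So the main ``obstacle'' is merely bookkeeping — confirming that each property of $a$ we invoke is transported correctly through the automorphism $\varphi$ attached to that particular $a$ — and there is no genuine difficulty here. The whole argument is a direct unwinding of definitions.
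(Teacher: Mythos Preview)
Your proof is correct and follows essentially the same approach as the paper: for each part you invoke the automorphism $\varphi$ attached to the given element and transfer the relevant property (being zero, being idempotent, being a primitive idempotent) through $\varphi$. The paper's proof is more terse, simply noting that $R$-algebra automorphisms preserve these properties, but the underlying argument is identical.
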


\begin{proof}
Let $a \in \operatorname{Ker}(\eta)$. Then $a \in \operatorname{Ker}(\varphi)$ for some
$\varphi \in \operatorname{Aut(A)}$. Thus $\operatorname{Ker}(\eta)=0$, and (a) holds.
Similarly, as $R$-algebra automorphisms preserve idempotents and primitive idempotents,
(b) and (c) hold.
\end{proof}

\subsection{Surjective local automorphisms} \label{sec:surlocalautos}

In this section we present some first results on surjective local automorphisms.
We start with the main result.

\begin{theorem} \label{thm:surjectivelautisaut}
Let $R$ be an indecomposable ring with $|R|\ge 3$, and $\eta$
a local automorphism of $\Pi=\prod_{x\in P}Re_{x}$ such that $e_{x}\in \operatorname{Im}(\eta)$ for all $x\in P$.
Then $\eta$ is an $R$-algebra automorphism, and $\eta=\widehat{\rho}$ for some permutation
$\rho:P\rightarrow P$.
\end{theorem}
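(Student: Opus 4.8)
The plan is to show that $\eta$ permutes the primitive idempotents $E=\{e_x:x\in P\}$ bijectively, and then that $\eta$ is determined on all of $\Pi$ by this permutation exactly as in the proof of Proposition~\ref{prop:inducedauto}. The key point is that although a local automorphism need not be surjective in general, the hypothesis $e_x\in\operatorname{Im}(\eta)$ for all $x$ forces surjectivity onto $E$.

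First I would observe, via Proposition~\ref{prop:triviallautoproperties}, that $\eta$ is injective and maps each primitive idempotent to a primitive idempotent; since by Proposition~\ref{prop:inducedauto} every automorphism of $\Pi$ is of the form $\widehat\rho$ and hence permutes $E$, we get $\eta(E)\subseteq E$. Thus there is an injective map $\sigma:P\to P$ with $\eta(e_x)=e_{\sigma(x)}$. The hypothesis $e_y\in\operatorname{Im}(\eta)$ for all $y$ is what I must leverage to conclude $\sigma$ is onto: given $y$, write $e_y=\eta(a)$ for some $a=\sum_x a_x e_x\in\Pi$, pick $\varphi=\widehat\rho\in\operatorname{Aut}(\Pi)$ with $\eta(a)=\varphi(a)$, so $e_y=\sum_x a_x e_{\rho(x)}$, forcing $a_{\rho^{-1}(y)}=1$ and $a_x=0$ otherwise, i.e. $a=e_{\rho^{-1}(y)}$ is itself a primitive idempotent. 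Then $e_y=\eta(e_{\rho^{-1}(y)})=e_{\sigma(\rho^{-1}(y))}$, so $y\in\operatorname{Im}(\sigma)$. Hence $\sigma$ is a permutation of $P$; set $\rho:=\sigma$.

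It remains to show $\eta=\widehat\rho$ on all of $\Pi$, and here is where I expect the only real work. For arbitrary $a=\sum_x a_x e_x\in\Pi$ and fixed $x\in P$, choose (using that $\eta$ is a local automorphism applied to $a$) some $\varphi=\widehat\tau\in\operatorname{Aut}(\Pi)$ with $\eta(a)=\varphi(a)=\sum_x a_x e_{\tau(x)}$. I want to pin down the $e_{\rho(x)}$-coordinate of $\eta(a)$. The difficulty is that $\tau$ depends on $a$ and need not equal $\rho$, so I cannot simply read off coordinates; I need an intrinsic characterization of the $e_{\rho(x)}$-entry of $\eta(a)$. The natural move is to use multiplication by the idempotent $e_x$: one has $a e_x = a_x e_x$, but $\eta$ is only $R$-linear, not multiplicative, so I instead work inside the single automorphism $\varphi$. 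A cleaner route: apply the local-automorphism property to the \emph{two} elements $a$ and $e_x$ simultaneously? That is not available (we only have $1$-local). Instead I would argue coordinate-by-coordinate using the freedom in $a$: the $e_{\rho(x)}$-coordinate map $\pi_{\rho(x)}\circ\eta:\Pi\to R$ is $R$-linear, and on the idempotents it satisfies $\pi_{\rho(x)}(\eta(e_z))=\pi_{\rho(x)}(e_{\rho(z)})=\delta_{xz}$. This suggests the idea, but $R$-linearity alone does not extend from finite to infinite products. So the genuine obstacle is controlling $\pi_{\rho(x)}\circ\eta$ on elements with infinite support, and the tool must again be the local-automorphism hypothesis: for the specific $a$, the witnessing $\varphi=\widehat\tau$ gives $\pi_{\rho(x)}(\eta(a))=\pi_{\rho(x)}(\widehat\tau(a))=a_{\tau^{-1}(\rho(x))}$, and I must show $\tau^{-1}(\rho(x))=x$, equivalently $\tau(x)=\rho(x)$. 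Here I finally use $|R|\ge 3$: choosing $a$ with $a_x$ and the other coordinates taking three distinct values appropriately, any two automorphisms $\widehat\tau,\widehat\rho$ agreeing with $\eta$ on $a$ and on all the $e_z$ must satisfy $\tau(x)=\rho(x)$, because otherwise two coordinates get swapped in a way that changes the value. I would make this precise by comparing $\eta(a+e_x)$ computed two ways, or by noting $\eta(e_x)=e_{\rho(x)}$ is fixed and $\widehat\tau$ must send the ``$a$-pattern'' to match. Once $\tau(x)=\rho(x)$ for every $x$ (with $\tau$ allowed to vary with $a$ but always agreeing with $\rho$ on the support where it matters), we conclude $\pi_{\rho(x)}(\eta(a))=a_x$ for all $x$ and all $a$, i.e. $\eta=\widehat\rho$, which is an $R$-algebra automorphism by Proposition~\ref{prop:inducedauto}. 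The case $R=\mathbb{Z}_2$ is genuinely excluded, consistent with Theorem~\ref{thm:R=2nonsurjlaut}.
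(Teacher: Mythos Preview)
Your first paragraph, establishing that $\eta$ restricts to a bijection on $E$ and hence $\eta(e_x)=e_{\rho(x)}$ for a permutation $\rho$, is correct and matches the paper's argument.

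The gap is in the second half. You correctly identify the key move---compare $\eta(a+e_x)$ with $\eta(a)+e_{\rho(x)}$---but you never explain how it actually closes. Write $\eta(a)=\widehat\sigma(a)$ and $\eta(a+e_y)=\widehat\tau(a+e_y)$. Equating gives
\[
(a_y+1)e_{\tau(y)}+\sum_{x\neq y}a_x e_{\tau(x)} \;=\; e_{\rho(y)}+\sum_{x}a_x e_{\sigma(x)}.
\]
To conclude $\sigma(y)=\rho(y)$ you need that the value $a_y$ does \emph{not} appear among the coordinates of the left side; this holds precisely when $a_x\neq a_y$ for all $x\neq y$. For a general $a$ that condition fails, so the comparison yields nothing. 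Your phrase ``choosing $a$ with $a_x$ and the other coordinates taking three distinct values appropriately'' is the problem: $a$ is \emph{given}, not chosen. What is missing is the decomposition step the paper supplies: write an arbitrary $a$ as $b+c$ where \emph{both} $b$ and $c$ satisfy the distinctness condition $b_x\neq b_y$, $c_x\neq c_y$ for $x\neq y$; then the comparison argument applies to each summand separately and $R$-linearity finishes. Constructing such a pair is exactly where $|R|\ge 3$ is used (pick $b_y=0$ and $b_x\in R\setminus\{0,a_x-a_y\}$ for $x\neq y$; set $c=a-b$). Without this splitting trick your argument does not reach general $a$, and your invocation of $|R|\ge 3$ remains a gesture rather than a step.
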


\begin{proof}
Let $E=\{e_{x}:x\in P\}$ be the set of all primitive idempotents of $\Pi$.
Then $\eta(E)\subseteq E$, and $E$ is closed under preimages under $\eta$.
We infer that $\eta(e_{x})=e_{\rho(x)}$ defines a permutation $\rho:P\rightarrow P$.

Fix some $y\in P$ and consider an element $a=\sum_{x\in P} a_{x}e_{x}\in \Pi$ such that
\begin{align} \label{eq:ycoordinate1}
a_{x}\neq a_{y} \mbox{\quad for all } y\neq x\in P.
\end{align}
Since $\eta$ is a local automorphism of $\Pi$, there exist permutations
$\sigma, \tau:P\rightarrow P$ such that
\[
\eta(a)=\widehat\sigma(a)=\sum_{x\in P}a_{x}e_{\sigma(x)} \mbox{ and }
\eta(a+e_{y})=\widehat\tau(a+e_{y})=(a_y+1)e_{\tau(y)}+\sum_{y\not=x\in P}a_{x}e_{\tau(x)}.
\]
In particular,
\[
(a_y+1)e_{\tau(y)}+\sum_{y\not=x\in P}a_{x}e_{\tau(x)} = \eta(a+e_{y}) = \eta(a)+ \eta(e_{y})= e_{\rho(y)}+\sum_{x\in P}a_{x}e_{\sigma(x)}.
\]
For $\rho(y)\neq\sigma(y)$, note that $a_y$ is the $e_{\sigma(y)}$-coordinate of the right-hand side, while all coordinates on the
left-hand side differ from $a_y$. Thus, we infer $\rho(y)=\sigma(y)$ or, in terms of coordinate entries of $a$ and $\eta(a)$,
\begin{align} \label{eq:ycoordinate2}
(\eta(a))_{\rho(y)}=(\eta(a))_{\sigma(y)}=a_y.
\end{align}

Now let $a \in\Pi$ be arbitrary. If we can decompose $a=b+c$ in such a way that $b,c \in \Pi$ both satisfy Property \eqref{eq:ycoordinate1},
then \eqref{eq:ycoordinate2} applies to $b$ and $c$, and
\begin{align} \label{eq:ycoordinate3}
(\eta(a))_{\rho(y)}=(\eta(b))_{\rho(y)}+(\eta(c))_{\rho (y)}=b_{y}+c_{y}=a_{y}=(\widehat\rho(a))_{\rho(y)}.
\end{align}
We define $b\in\Pi$ by choosing $b_{y}=0$ and $b_{x}\in R\setminus \{0,a_{x}-a_{y}\}$ for all $y \not= x\in P$.
Let $c=a-b$. Then $c_{y}=a_{y}$ and
\[
c_{x}=a_{x}-b_{x}\notin\{a_{x}-0,a_{x}-(a_{x}-a_{y})\}=\{a_{x},a_{y}\}=\{a_{x},c_{y}\}.
\]
for all $y \not= x\in P$. Thus, Property \eqref{eq:ycoordinate1} holds for both $b$ and $c$.

Note that \eqref{eq:ycoordinate3} holds for all $y\in P$, thus $\eta(a)=\widehat \rho(a)$ for all $a \in \Pi$,
and $\eta = \widehat \rho$.
\end{proof}

We note an important immediate consequence of this theorem.

\begin{corollary} \label{cor:surjectivelautisaut}
Let $R$ be an indecomposable ring with $|R|\ge 3$, and $\eta$
a surjective local automorphism of $\Pi=\prod_{x\in P}Re_{x}$.
Then $\eta$ is an $R$-algebra automorphism.
\end{corollary}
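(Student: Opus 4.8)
The plan is to deduce this immediately from Theorem \ref{thm:surjectivelautisaut}. The key observation is that the hypothesis of that theorem --- namely $e_{x}\in\operatorname{Im}(\eta)$ for all $x\in P$ --- is a priori weaker than surjectivity of $\eta$, so it is automatically satisfied in the present setting. Thus essentially no work remains: the substantive argument (the coordinate-wise analysis using Property \eqref{eq:ycoordinate1} and the decomposition $a=b+c$) has already been carried out in the proof of Theorem \ref{thm:surjectivelautisaut}, where the assumption $|R|\ge 3$ is used to produce the required elements $b_x\in R\setminus\{0,a_x-a_y\}$.

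First I would note that since $\eta$ is surjective, $\operatorname{Im}(\eta)=\Pi$; in particular $e_{x}\in\operatorname{Im}(\eta)$ for every $x\in P$. Hence the hypotheses of Theorem \ref{thm:surjectivelautisaut} are met, and that theorem yields a permutation $\rho:P\rightarrow P$ with $\eta=\widehat{\rho}$.

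Finally I would invoke Proposition \ref{prop:inducedauto}: as $\rho$ is a permutation of $P$, the map $\widehat{\rho}$ lies in $\operatorname{Aut}(\Pi)$, so $\eta=\widehat{\rho}$ is an $R$-algebra automorphism of $\Pi$, as claimed. (Alternatively, one may first remark that a surjective local automorphism is bijective by Proposition \ref{prop:triviallautoproperties}(a), but this is not needed once Theorem \ref{thm:surjectivelautisaut} is available.) I do not expect any obstacle here: the entire content of the corollary is contained in Theorem \ref{thm:surjectivelautisaut}, and the only step is the trivial remark that surjectivity forces the weaker image condition.
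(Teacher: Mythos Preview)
Your proposal is correct and matches the paper's approach exactly: the paper presents this corollary as an immediate consequence of Theorem~\ref{thm:surjectivelautisaut} without further proof, and your observation that surjectivity trivially implies $e_x\in\operatorname{Im}(\eta)$ for all $x\in P$ is precisely the intended one-line deduction.
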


Theorem \ref{thm:surjectivelautisaut} and Corollary \ref{cor:surjectivelautisaut}
fail in the case of $|R|=2$ as will be detailed in Section \ref{sec:R=2}.
In this respect, the $2$-local automorphisms of $\Pi$ are better behaved.

\begin{theorem}
Let $R$ be an indecomposable ring, and $\eta$
a $2$-local automorphism of $\Pi=\prod_{x\in P}Re_{x}$ such that $e_{x}\in \operatorname{Im}(\eta)$ for all $x\in P$.
Then $\eta$ is an $R$-algebra automorphism, and $\eta=\widehat{\rho}$ for some permutation
$\rho:P\rightarrow P$.
\end{theorem}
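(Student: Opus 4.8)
The statement is the $2$-local analogue of Theorem~\ref{thm:surjectivelautisaut}, dropping the hypothesis $|R|\ge 3$. The plan is to imitate the proof of Theorem~\ref{thm:surjectivelautisaut}, exploiting that a $2$-local automorphism lets us fix a permutation on \emph{two} chosen elements simultaneously rather than one, which is exactly what lets us circumvent the size restriction on $R$. As before, the first step is to observe that $\eta$ maps the set $E=\{e_x:x\in P\}$ of primitive idempotents into itself and is closed under $\eta$-preimages (Proposition~\ref{prop:triviallautoproperties}(c) plus the hypothesis $e_x\in\operatorname{Im}(\eta)$), so $\eta(e_x)=e_{\rho(x)}$ defines a permutation $\rho$ of $P$; the goal is to show $\eta=\widehat\rho$.

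Fix $a=\sum_{x\in P}a_xe_x\in\Pi$ and fix $y\in P$. The key step is to apply the $2$-local property to the pair $\{a,\,a+e_y\}$: there is a single permutation $\tau$ of $P$ with $\eta(a)=\widehat\tau(a)$ and $\eta(a+e_y)=\widehat\tau(a+e_y)$. Subtracting (using $R$-linearity of $\eta$) gives $\eta(e_y)=\widehat\tau(e_y)$, i.e. $e_{\rho(y)}=e_{\tau(y)}$, so $\tau(y)=\rho(y)$. Therefore
\[
(\eta(a))_{\rho(y)} = (\widehat\tau(a))_{\tau(y)} = a_y = (\widehat\rho(a))_{\rho(y)}.
\]
Since $y\in P$ was arbitrary and $a$ was arbitrary, this shows $\eta(a)=\widehat\rho(a)$ for every $a\in\Pi$, hence $\eta=\widehat\rho$, which by Proposition~\ref{prop:inducedauto} is an $R$-algebra automorphism of $\Pi$. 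Notably, no decomposition $a=b+c$ into elements with distinct coordinates is needed, so the argument goes through verbatim for $|R|=2$.

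I expect there to be essentially no obstacle here: the only thing to check carefully is that subtracting the two equations $\eta(a)=\widehat\tau(a)$ and $\eta(a+e_y)=\widehat\tau(a+e_y)$ is legitimate, which it is because both $\eta$ and $\widehat\tau$ are $R$-linear, and that the coordinate comparison $\eta(e_y)=e_{\rho(y)}$ on the left matches $\widehat\tau(e_y)=e_{\tau(y)}$ on the right forces $\tau(y)=\rho(y)$ (distinct basis idempotents are distinct elements of $\Pi$). This is strictly simpler than the proof of Theorem~\ref{thm:surjectivelautisaut}, since the extra freedom of a $2$-local (versus $1$-local) automorphism replaces the combinatorial coordinate-separation trick that required $R$ to have at least three elements.
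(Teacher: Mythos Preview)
Your proof is correct and follows essentially the same approach as the paper. The only minor difference is that the paper applies the $2$-local property directly to the pair $(a,e_y)$ rather than to $(a,a+e_y)$, which avoids the subtraction step but leads to the identical conclusion $\tau(y)=\rho(y)$ and hence $(\eta(a))_{\rho(y)}=a_y$.
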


\begin{proof}
As seen in Theorem \ref{thm:surjectivelautisaut}, $\eta$ induces a permutation
$\rho:P\rightarrow P$ with $\eta(e_{x})=e_{\rho(x)}$ for all $x\in P$. Let
$a=\sum_{x\in P} a_{x}e_{x}\in \Pi$ and $y\in P$. Since $\eta$ is a $2$-local
automorphism, there exists a permutation $\sigma:P\rightarrow P$ such that
$\eta(a)=\widehat{\sigma}(a)$ and $\eta(e_{y})=\widehat{\sigma}(e_{y})$.
It follows that $e_{\rho(y)}=\eta(e_{y})=\widehat{\sigma}(e_{y})=e_{\sigma(y)}$,
and thus $\rho(y)=\sigma(y)$. Now we have
\begin{align} \label{eq:ycoordinate4}
(\eta (a))_{\rho(y)}=(\eta (a))_{\sigma(y)}=(\widehat{\sigma}(a))_{\sigma(y)}=a_{y}=(\widehat\rho(a))_{\rho(y)}.
\end{align}

Note that \eqref{eq:ycoordinate4} holds for all $y\in P$, thus $\eta(a)=\widehat \rho(a)$
for all $a \in \Pi$, and $\eta = \widehat \rho$.
\end{proof}

We mention the following counterpart to Corollary \ref{cor:surjectivelautisaut}.

\begin{corollary} \label{cor:sur2localauto}
Let $R$ be an indecomposable ring, and $\eta$
a surjective $2$-local automorphism of $\Pi=\prod_{x\in P}Re_{x}$.
Then $\eta$ is an $R$-algebra automorphism.
\end{corollary}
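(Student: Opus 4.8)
The statement to prove is Corollary \ref{cor:sur2localauto}: a surjective $2$-local automorphism of $\Pi = \prod_{x \in P} Re_x$ is an $R$-algebra automorphism.

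The plan is to deduce this directly from the preceding theorem (the $2$-local version of Theorem \ref{thm:surjectivelautisaut}), exactly in the way Corollary \ref{cor:surjectivelautisaut} is deduced from Theorem \ref{thm:surjectivelautisaut}. The preceding theorem asserts that if $\eta$ is a $2$-local automorphism with $e_x \in \operatorname{Im}(\eta)$ for all $x \in P$, then $\eta = \widehat\rho$ for a permutation $\rho$. So the only thing to check is that a surjective $2$-local automorphism satisfies the hypothesis $e_x \in \operatorname{Im}(\eta)$ for all $x$ — which is immediate, since surjectivity means $\operatorname{Im}(\eta) = \Pi \supseteq E = \{e_x : x \in P\}$. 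Note that a $2$-local automorphism is in particular a local automorphism, so Proposition \ref{prop:triviallautoproperties} applies if one wants injectivity for free, but it is not even needed here.

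So the proof is essentially one line: apply the previous theorem. First I would observe that $\eta$ being surjective gives $e_x \in \operatorname{Im}(\eta)$ for all $x \in P$. Then I would invoke the preceding theorem to conclude $\eta = \widehat\rho$ for some permutation $\rho$ of $P$, hence $\eta \in \operatorname{Aut}(\Pi)$ by Proposition \ref{prop:inducedauto}.

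There is no real obstacle here — the content was entirely in the preceding theorem, whose proof (for the $2$-local case) is short: the $2$-locality lets one, for each $a$ and each $y$, find a single permutation $\sigma$ matching $\eta$ simultaneously on $a$ and on $e_y$, forcing $\sigma(y) = \rho(y)$ and reading off the $\rho(y)$-coordinate of $\eta(a)$ as $a_y$. The corollary merely packages the surjective case. If anything deserves a remark, it is only that no restriction $|R| \ge 3$ is needed here, in contrast with Corollary \ref{cor:surjectivelautisaut}, because the $2$-local hypothesis is strong enough to avoid the coordinate-separation trick that required large $R$.

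\begin{proof}
Since $\eta$ is surjective, $\operatorname{Im}(\eta)=\Pi$, so in particular $e_{x}\in\operatorname{Im}(\eta)$ for all $x\in P$. By the preceding theorem, $\eta=\widehat{\rho}$ for some permutation $\rho:P\rightarrow P$, and hence $\eta\in\operatorname{Aut}(\Pi)$ by Proposition \ref{prop:inducedauto}.
\end{proof}
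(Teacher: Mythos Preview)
Your proof is correct and matches the paper's approach: the corollary is stated without proof in the paper, being an immediate consequence of the preceding theorem once one notes that surjectivity gives $e_x\in\operatorname{Im}(\eta)$ for all $x\in P$.
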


\subsection{The exceptional case $|R|=2$} \label{sec:R=2}

Now we consider the special case $|R| =2$, which was left out in Theorem \ref{thm:surjectivelautisaut}.
In this case,
$R=\mathbb{Z}_2=\{0,1\}$ is the field with two elements, and the $R$-algebra $\Pi=(\Pi,+,\cdot)$ is
naturally isomorphic to $\mathfrak{B}=(\mathcal{P}(P),\triangle,\cap)$, where
$\mathcal{P}(P)$ is the power set of $P$, and $\triangle$ denotes the symmetric
difference. Let $\varphi$ be an automorphism of $\mathfrak{B}$. By Proposition \ref{prop:inducedauto}
there exists some permutation $\rho:P\rightarrow P$ such that $\varphi
=\widehat{\rho}$, i.e.,
\[
\varphi(X)=\{\rho(x):x\in X\}=\rho(X) \mbox{\: and \:} P\setminus \varphi(X)=\{\rho(x):x\in P\setminus X\}=\rho(P\setminus X)
\]
for all subsets $X$ of $P$. Note, in particular, that $\varphi$ preserves cardinalities,
\[
|\varphi(X)| =|X| \mbox{\quad and \quad} |P\setminus \varphi(X)| =|P\setminus X| \mbox{\quad for all } X\subseteq P.
\]
This observation motivates the following characterization of local automorphisms.

\begin{proposition}
Let $\eta:\mathfrak{B}\rightarrow\mathfrak{B}$ be a map. Then $\eta$ is a
local automorphism of $\mathfrak{B}$ if and only if
\begin{align} \label{eq:R=2char}
|\eta(X)| =|X| \mbox{\quad and \quad} |P\setminus \eta(X)| =|P\setminus X| \mbox{\quad for all } X\subseteq P.
\end{align}
\end{proposition}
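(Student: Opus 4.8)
The plan is to prove both implications of the stated equivalence, exploiting the dictionary between $\Pi$ and $\mathfrak{B}=(\mathcal{P}(P),\triangle,\cap)$ and the description of $\operatorname{Aut}(\mathfrak{B})$ recorded just before the proposition.

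For the forward direction, suppose $\eta$ is a local automorphism of $\mathfrak{B}$ and fix $X\subseteq P$. By definition there is some $\varphi\in\operatorname{Aut}(\mathfrak{B})$ with $\eta(X)=\varphi(X)$. By Proposition \ref{prop:inducedauto} (transported through the isomorphism $\Pi\cong\mathfrak{B}$), $\varphi=\widehat{\rho}$ for a permutation $\rho$ of $P$, so $\varphi(X)=\rho(X)$ and $P\setminus\varphi(X)=\rho(P\setminus X)$. Since $\rho$ is a bijection it preserves cardinalities of subsets, so $|\eta(X)|=|\varphi(X)|=|\rho(X)|=|X|$ and likewise $|P\setminus\eta(X)|=|P\setminus X|$. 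As $X$ was arbitrary, \eqref{eq:R=2char} holds.

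For the converse, assume $\eta$ satisfies \eqref{eq:R=2char}; I must produce, for each fixed $X_0\subseteq P$, an automorphism $\varphi$ of $\mathfrak{B}$ agreeing with $\eta$ on $X_0$, i.e. a permutation $\rho$ of $P$ with $\rho(X_0)=\eta(X_0)$. The two cardinality conditions give $|X_0|=|\eta(X_0)|$ and $|P\setminus X_0|=|P\setminus\eta(X_0)|$, so one can choose a bijection $X_0\to\eta(X_0)$ and a bijection $P\setminus X_0\to P\setminus\eta(X_0)$; gluing these yields a permutation $\rho$ of $P$ with $\rho(X_0)=\eta(X_0)$, hence $\widehat{\rho}(X_0)=\eta(X_0)$. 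Since $\widehat{\rho}\in\operatorname{Aut}(\mathfrak{B})$, this exhibits $\eta$ as a local automorphism. I should also note at the outset that, being the image of an $\mathbb{Z}_2$-algebra automorphism on each input, $\eta$ is automatically $\mathbb{Z}_2$-linear in the relevant sense is \emph{not} needed here — the definition of local automorphism in $\mathfrak{B}$ only asks pointwise agreement with some automorphism — so the proposition is really a clean cardinality criterion and no linearity argument intervenes.

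The only subtlety, and the step I would be most careful about, is the gluing in the converse: one needs both that $X_0$ and $\eta(X_0)$ have equal cardinality \emph{and} that their complements do, precisely so that the two partial bijections assemble into a genuine bijection of all of $P$ — this is why \eqref{eq:R=2char} carries the second clause rather than just $|\eta(X)|=|X|$ (which alone would fail, e.g. when $P$ is infinite and $X_0$ is cofinite while $\eta(X_0)$ is finite of the same cardinality). I would state this explicitly. Everything else is routine: the forward direction is just "bijections preserve cardinality," and the transport of Proposition \ref{prop:inducedauto} along $\Pi\cong\mathfrak{B}$ was already set up in the paragraph preceding the statement.
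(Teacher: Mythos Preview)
Your argument is essentially identical to the paper's: the forward direction uses that automorphisms of $\mathfrak{B}$ come from permutations and hence preserve the two cardinalities, and the converse builds a permutation $\rho$ by gluing bijections $X_0\to\eta(X_0)$ and $P\setminus X_0\to P\setminus\eta(X_0)$.

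One caveat on your closing commentary: your claim that ``the definition of local automorphism in $\mathfrak{B}$ only asks pointwise agreement with some automorphism'' is not what the paper says. Definition~3.3 explicitly requires a local automorphism to be an $R$-linear map (here, a $\triangle$-homomorphism) \emph{in addition} to the pointwise-agreement condition. The paper's own proof of this proposition also does not verify linearity, so either the proposition is tacitly assuming $\eta$ is already $\mathbb{Z}_2$-linear, or both proofs share this omission; in any case, when the proposition is invoked in Theorem~\ref{thm:R=2nonsurjlaut}, the map $\eta=\operatorname{id}_K\oplus\theta$ is explicitly checked to be a $\triangle$-homomorphism before the cardinality criterion is applied. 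So your proof matches the paper's, but you should drop the remark that linearity is not part of the definition.
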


\begin{proof}
Assume that $\eta:\mathfrak{B}\rightarrow\mathfrak{B}$ is a local
automorphism of $\mathfrak{B}$, and $X\subseteq P$. Then there exists an
automorphism $\varphi:\mathfrak{B}\rightarrow\mathfrak{B}$ with
$\eta(X)=\varphi(X)$, which yields $|\eta(X)|=|\varphi(X)|=|X|$ and $|P\setminus \eta(X)|=|P\setminus \varphi(X)|=|P\setminus X|$.

Now assume that $\eta:\mathfrak{B}\rightarrow\mathfrak{B}$ is a map with
$|\eta(X)| =|X|$ and $|P\setminus \eta(X)| =|P\setminus X|$ for all $X\subseteq P$.
Thus, for any given $X\subseteq P$ we can choose some permutation
$\rho:P\rightarrow P$ with $\rho(X)=\eta(X)$.
Then $\widehat{\rho}$ is an automorphism of $\mathfrak{B}$ such that
$\eta(X)=\widehat{\rho}(X)$, and $\eta$ is a local automorphism.
\end{proof}

Property \eqref{eq:R=2char} provides a simple and powerful tool for
constructing various  local automorphisms. We will use it to give
counterexamples for Theorem~\ref{thm:surjectivelautisaut} and
Corollary \ref{cor:surjectivelautisaut}, namely, bijective local automorphisms
of $\mathfrak{B}$ which are no automorphisms.

\begin{theorem}  \label{thm:R=2nonsurjlaut}
Let $\kappa=|P|$ be infinite. Then there exist
$2^{2^{\kappa}}$-many non-surjective local automorphisms of $\mathfrak{B}$, as
well as $2^{2^{\kappa}}$-many bijective local automorphisms of $\mathfrak{B}$
that are not $R$-algebra automorphisms of $\mathfrak{B}$.
\end{theorem}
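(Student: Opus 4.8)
The plan is to use the characterization in the preceding proposition: a map $\eta:\mathfrak{B}\rightarrow\mathfrak{B}$ is a local automorphism precisely when $|\eta(X)|=|X|$ and $|P\setminus\eta(X)|=|P\setminus X|$ for all $X\subseteq P$. So I only need to manufacture many maps of $\mathcal{P}(P)$ to itself respecting these two cardinal invariants, and then count them, distinguishing the non-surjective ones from the bijective non-automorphisms.

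First I would treat the non-surjective case. Pick a point $p\in P$ and write $P=\{p\}\cup P'$ with $|P'|=\kappa$. The sets $X$ with $p\in X$ split into those cofinite and those not; more efficiently, I would isolate a ``large'' family of subsets that can be permuted freely. Concretely, let $\mathcal{S}=\{X\subseteq P : |X|=\kappa \text{ and } |P\setminus X|=\kappa\}$; every $X\in\mathcal{S}$ has the same pair of invariants, so any injection $\mathcal{S}\to\mathcal{S}$ extends (by the identity off $\mathcal{S}$) to a local automorphism, and it is non-surjective as soon as the injection is. Since $|\mathcal{S}|=2^{\kappa}$, there are $2^{2^{\kappa}}$ injections $\mathcal{S}\to\mathcal{S}$, and distinct injections give distinct $\eta$; this yields $2^{2^{\kappa}}$-many non-surjective local automorphisms. (One should check $|\mathcal{S}|=2^{\kappa}$: fixing a partition of $P$ into two pieces of size $\kappa$ and varying a subset inside one piece already gives $2^{\kappa}$ such sets, and the total is at most $2^{\kappa}$.)

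Next, the bijective case. I want bijections $\eta$ of $\mathcal{P}(P)$ preserving both invariants that are not of the form $\widehat{\rho}$. The idea is to take a bijection that is the identity outside a controlled region and permutes $\mathcal{S}$ by a bijection $f:\mathcal{S}\to\mathcal{S}$ that does not come from a permutation of $P$ — e.g. one that sends some $X$ to a set $Y$ with $|X\triangle Y|\neq$ what a point-permutation would force, or that fails to commute with $\cap$. A clean way: fix two sets $A,B\in\mathcal{S}$ that are ``independent'' (so $A\cap B$, $A\setminus B$, $B\setminus A$, $P\setminus(A\cup B)$ all have size $\kappa$), and define $\eta$ to be the transposition swapping $A$ and $B$ and fixing everything else; this is a bijection of $\mathcal{P}(P)$ preserving the two invariants, hence a bijective local automorphism, but it does not preserve $\cap$ in general (e.g. $\eta(A\cap B)=A\cap B\neq A=\eta(A)\cap\eta(B)$ when $A\cap B\notin\{A,B\}$ and $A,B\notin$ the swapped pair's constraints — arrange this), so it is not an $R$-algebra automorphism. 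To get $2^{2^{\kappa}}$ of them I would instead let $\eta$ permute $\mathcal{S}$ by an arbitrary permutation $f$ of $\mathcal{S}$ fixing at least two independent sets $A_0,B_0$ in a way that kills multiplicativity on them; since the permutation group of a set of size $2^{\kappa}$ has order $2^{2^{\kappa}}$ and only $2^{\kappa}$-many of those arise from permutations of $P$ (there are only $2^{\kappa}$ permutations of $P$), all but a small set of these $f$ give bijective local automorphisms that are not automorphisms, and there are $2^{2^{\kappa}}$-many of them.

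The main obstacle is the bijective, non-automorphism count done carefully: I must (i) guarantee that $\eta$ restricted to $\mathcal{S}$ being a genuine permutation, together with the identity off $\mathcal{S}$, gives a bijection of all of $\mathcal{P}(P)$ that still respects the invariants on every $X$ (trivial off $\mathcal{S}$, and automatic on $\mathcal{S}$ since all its members share the invariants), and (ii) rule out that such an $\eta$ is some $\widehat{\rho}$ — which follows because $\widehat{\rho}$ fixes $\emptyset,P$ and is determined by its action on singletons, none of which lie in $\mathcal{S}$, so $\widehat{\rho}\upharpoonright_{\mathcal{S}}$ is not an arbitrary permutation; more precisely, the map $\rho\mapsto\widehat{\rho}\upharpoonright_{\mathcal{S}}$ has image of size at most $2^{\kappa}<2^{2^{\kappa}}$, so a counting argument isolates $2^{2^{\kappa}}$-many $\eta$ that are not automorphisms. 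I would also double-check the easy direction that each constructed $\eta$ is well-defined as a map $\mathfrak{B}\to\mathfrak{B}$ and, in the non-surjective case, exhibit an explicit set not in the image (e.g. the image of a fixed non-surjective injection of $\mathcal{S}$).
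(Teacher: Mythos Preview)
Your argument has a genuine gap: a local automorphism is, by definition, an $R$-linear map, and for $R=\mathbb{Z}_2$ this means it must be a group homomorphism with respect to $\triangle$. The preceding proposition should be read as characterizing which $\triangle$-linear maps are local automorphisms; its backward direction only verifies the ``for each $X$ there is an automorphism'' clause, not additivity. Your maps---the identity off $\mathcal{S}=\{X:|X|=|P\setminus X|=\kappa\}$ together with an arbitrary injection or permutation on $\mathcal{S}$---are almost never $\triangle$-linear. For instance, take $A\in\mathcal{S}$ and a single point $p\in A$; then $A\setminus\{p\}\in\mathcal{S}$ and $\{p\}\notin\mathcal{S}$, so $\triangle$-linearity would force $f(A\setminus\{p\})=f(A)\triangle\{p\}$, a constraint your arbitrary $f$ does not meet. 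Likewise, a transposition of two sets in $\mathcal{S}$ fixing all other subsets is certainly not additive.

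The paper avoids this by working inside the $\mathbb{Z}_2$-vector space structure: it sets $K=\{X:|X|<\kappa\text{ or }|P\setminus X|<\kappa\}$, observes that $K$ is a $\triangle$-subgroup, picks a complement $C$ with $\mathfrak{B}=K\oplus C$, and shows $|C|=2^{\kappa}$ via an independent family. Then $\eta=\operatorname{id}_K\oplus\theta$ for any injective (respectively bijective) group endomorphism $\theta$ of $C$ is automatically $\triangle$-linear, and the cardinal invariants are preserved because every nonzero coset of $K$ consists of sets $X$ with $|X|=|P\setminus X|=\kappa$. Counting such $\theta$ on a $\mathbb{Z}_2$-space of dimension $2^{\kappa}$ gives $2^{2^{\kappa}}$, while only $2^{\kappa}$ of the bijective $\eta$ can be genuine automorphisms. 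To repair your approach you would need to restrict to $\triangle$-linear maps from the start, which essentially forces you back to the paper's decomposition.
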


\begin{proof}
Let
\[
K=\{X\subseteq P: |X| <\kappa \mbox{ or } |P\setminus X| <\kappa\}.
\]
It is easy to see that $K$ is closed with respect
to $\triangle$ and thus a subgroup of cardinality $|K|=2^{<\kappa}$ of the elementary
abelian $2$-group $\mathfrak{B}=(\mathcal{P}(P),\triangle)$. Thus, there is a subgroup $C$ of
$\mathfrak{B}$ such that $\mathfrak{B}=K\oplus C$ with respect to $\triangle$. We claim that
\[
|C|=|\mathfrak{B}|=2^\kappa.
\]
To see this, let us call a family $\mathcal{F}$ of subsets of $P$ \emph{independent}
if for any distinct sets $X_1,\dots,X_n,Y_1,\ldots,Y_m$ in $\mathcal{F}$ the intersection
\[
\bigcap_{i=1}^n X_i \cap \bigcap_{j=1}^m (P \setminus Y_j)
\]
has cardinality $\kappa$. With \cite[Lemma 7.7]{Jech} there exists an independent family
$\mathcal{F}\subseteq \mathcal{P}(P)$ of cardinality $|\mathcal{F}|=2^\kappa$,
and it is easy to check that $\mathcal{F}$ is a set of independent elements modulo $K$.
Hence $|C|=|\mathcal{F}|=2^\kappa$.

Now let $\theta:C\rightarrow C$ be any injective $\triangle$-homomorphism,
and define $\eta:\mathfrak{B}\rightarrow\mathfrak{B}$ by
$\eta=\operatorname{id}_{K}\oplus\, \theta$. Then $\eta$ is a
homomorphism with respect to $\triangle$ and with \eqref{eq:R=2char}
a local automorphism of $\mathfrak{B}$.

If $\theta$ is chosen to be non-surjective, then $\eta$ is a non-surjective local automorphism,
and there are $2^{2^{\kappa}}$ such maps. Of course, if $\theta$ is chosen to be bijective,
then $\eta$ is a bijective local automorphism, and there are again $2^{2^{\kappa}}$ such maps.
Note, however, that $\mathfrak{B}$ has only $2^{\kappa}$
many automorphisms since those are induced by permutations $\rho:P\rightarrow P$, cf. Proposition \ref{prop:inducedauto}.
\end{proof}

In the last theorem we caught a first glimpse of non-surjective local automorphisms.
These will become the topic of Section \ref{sec:nonsurjlaut}. Note also, that independent
families, as introduced in the proof of Theorem \ref{thm:R=2nonsurjlaut}, closely
relate to ultrafilters. This connection will intensify in the following section.

\subsection{Non-surjective local automorphisms} \label{sec:nonsurjlaut}
As seen in Section \ref{sec:surlocalautos}, surjective local automorphisms basically
coincide with $R$-algebra automorphisms.
In this section, we want to discuss the possible existence of non-surjective
local automorphisms on $\Pi$. We will see that this problem relates to a specific
large cardinal number $\mu_R$, cf. Definition \ref{def:muR}.

We will start discussing the central ultrafilter construction for the special case
of finite indecomposable rings $R$. This will need a small auxiliary result.

\begin{lemma} \label{lem:sigmagivesrho}
Let $\omega=X \,\dot\cup\, Y$ be a partition of $\omega$ such that
$Y$ is infinite, and let $\sigma: \omega \rightarrow \omega \setminus \{0\}$
be a bijection.
Then there exists a permutation $\rho$ of $\omega$ such that
${\rho\upharpoonright}_X={\sigma\upharpoonright}_X$ and $\rho(Y)=\sigma
(Y)\,\dot\cup\,\{0\}$.
\end{lemma}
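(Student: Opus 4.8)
The plan is to build $\rho$ by modifying $\sigma$ only on $Y$, fixing it to agree with $\sigma$ on $X$. The obstruction to simply setting $\rho=\sigma$ is that $\sigma$ maps onto $\omega\setminus\{0\}$, so $0$ is never hit, whereas we want $0$ to lie in the image of $\rho(Y)$; moreover we must arrange that $\rho$ is a bijection of all of $\omega$ while keeping the image of $Y$ equal to $\sigma(Y)\,\dot\cup\,\{0\}$. Since $X$ and $Y$ partition $\omega$ and $\sigma$ is a bijection, $\sigma(X)$ and $\sigma(Y)$ partition $\omega\setminus\{0\}$; the only ``defect'' is the missing point $0$, which belongs to neither $\sigma(X)$ nor $\sigma(Y)$.

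First I would note that since $Y$ is infinite, $\sigma(Y)$ is an infinite subset of $\omega\setminus\{0\}$, hence $\sigma(Y)\,\dot\cup\,\{0\}$ is also infinite and countable. Let $g\colon \sigma(Y)\to \sigma(Y)\,\dot\cup\,\{0\}$ be any bijection between these two countably infinite sets. Now define
\[
\rho(n)=\begin{cases} \sigma(n), & n\in X,\\ g(\sigma(n)), & n\in Y.\end{cases}
\]
Then ${\rho\upharpoonright}_X={\sigma\upharpoonright}_X$ by construction. Also ${\rho\upharpoonright}_Y = g\circ({\sigma\upharpoonright}_Y)$ is a composition of the bijection ${\sigma\upharpoonright}_Y\colon Y\to\sigma(Y)$ with the bijection $g\colon\sigma(Y)\to\sigma(Y)\,\dot\cup\,\{0\}$, hence a bijection from $Y$ onto $\sigma(Y)\,\dot\cup\,\{0\}$; in particular $\rho(Y)=\sigma(Y)\,\dot\cup\,\{0\}$, as required.

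It remains to check that $\rho$ is a permutation of $\omega$. Injectivity: ${\rho\upharpoonright}_X=\sigma\upharpoonright_X$ is injective with image $\sigma(X)$, and ${\rho\upharpoonright}_Y$ is injective with image $\sigma(Y)\,\dot\cup\,\{0\}$; since $\sigma(X)$ and $\sigma(Y)\,\dot\cup\,\{0\}$ are disjoint (because $\sigma(X)\subseteq\omega\setminus\{0\}$ is disjoint from $\sigma(Y)$ and does not contain $0$), the two pieces do not collide, so $\rho$ is injective on $X\,\dot\cup\,Y=\omega$. Surjectivity: $\operatorname{Im}(\rho)=\sigma(X)\,\dot\cup\,\bigl(\sigma(Y)\,\dot\cup\,\{0\}\bigr)=\bigl(\sigma(X)\,\dot\cup\,\sigma(Y)\bigr)\,\dot\cup\,\{0\}=(\omega\setminus\{0\})\,\dot\cup\,\{0\}=\omega$. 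Thus $\rho$ is the desired permutation. I do not expect any genuine obstacle here; the one point to be careful about is verifying the disjointness $\sigma(X)\cap(\sigma(Y)\cup\{0\})=\emptyset$, which is exactly what makes the two independently-defined bijective pieces glue into a global bijection.
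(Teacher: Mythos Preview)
Your proof is correct and follows essentially the same approach as the paper: define $\rho$ to agree with $\sigma$ on $X$ and to be a bijection from $Y$ onto $\sigma(Y)\,\dot\cup\,\{0\}$ on $Y$, then verify that the two pieces glue to a permutation of $\omega$. The only difference is cosmetic: the paper makes the bijection $g$ explicit by enumerating $Y=\{y_i:i\in\omega\}$ and setting $\rho(y_0)=0$, $\rho(y_i)=\sigma(y_{i-1})$ for $i\ge 1$, whereas you simply invoke the existence of a bijection between two countably infinite sets.
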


\begin{proof}
Note that $\omega=\sigma(X)\,\dot\cup\,\sigma(Y)\,\dot\cup\,\{0\}$
is a partition of $\omega$. Let $Y=\{y_{i}:i\in\omega\}$ be an
enumeration of the elements of $Y$ and define
\[
\rho(x) = \left\{
  \begin{array}{ll}
     \sigma(x) & \mbox{for $x \in X$}, \\
     0 & \mbox{for $x = y_0$},\\
     \sigma(y_{i-1}) & \mbox{for $x=y_i$, $1\leq i\in\omega$}.
  \end{array} \right.
\]
Note that $\rho(X) = \sigma(X)$ and $\rho(Y) = \sigma(Y) \cup \{0\}$,
and thus $\rho$ is surjective. Moreover, $\rho\upharpoonright X$ and
$\rho\upharpoonright Y$ are injective with
\[
\rho(X) \cap \rho(Y) = \sigma(X) \cap \left( \sigma(Y) \cup \{0\} \right) = \emptyset,
\]
which shows that $\rho$ is bijective.
\end{proof}

With this we are ready for our first ultrafilter construction of a non-surjective local
automorphism.

\begin{lemma} \label{lem:finiteRnonsurjective}
Let $R$ be a finite indecomposable ring and $\Pi=\prod_{i\in\omega}Re_{i}$.
Then there exists some unital $\eta\in \operatorname{End}(\Pi)$ such that $\eta$ is an
$n$-local automorphism for all $n>0$, but not surjective. In particular,
$\eta$ is not an $R$-algebra automorphism of $\Pi$.
\end{lemma}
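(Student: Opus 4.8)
The goal is to build a unital $R$-algebra endomorphism $\eta$ of $\Pi = \prod_{i\in\omega} Re_i$ that is an $n$-local automorphism for every $n$ but is not surjective. The natural way to fail surjectivity while preserving all algebraic structure is to use a non-principal ultrafilter $\mathcal{U}$ on $\omega$ to ``hide'' one coordinate. Concretely, I would fix a non-principal ultrafilter $\mathcal{U}$ on $\omega$ together with a bijection $\sigma:\omega\to\omega\setminus\{0\}$, and define $\eta$ on a standard element $a=\sum_{i\in\omega}a_ie_i$ by setting the $0$-th coordinate of $\eta(a)$ to be the $\mathcal{U}$-limit of the sequence $(a_i)_{i\in\omega}$ (which exists and is unique since $R$ is finite, so $R$ carries the discrete topology and is compact), and the $\sigma(i)$-th coordinate of $\eta(a)$ to be $a_i$. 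In formulas, if $a_i = r$ for all $i$ in some set in $\mathcal{U}$, put $(\eta(a))_0 = r$, and $(\eta(a))_{\sigma(i)} = a_i$ for all $i\in\omega$. One checks directly that $\eta$ is $R$-linear, multiplicative (the $\mathcal{U}$-limit respects pointwise products because $\mathcal{U}$ is a filter and $R$ is finite), and unital (the all-ones vector maps to the all-ones vector). It is not surjective: the $0$-th coordinate of $\eta(a)$ is always ``tied'' to the tail behaviour of $a$ via $\mathcal{U}$, and e.g. no element of $\operatorname{Im}(\eta)$ can have $0$-th coordinate $1$ while all $\sigma(i)$-th coordinates are $0$, since that would force $a_i=0$ for all $i$, hence $\mathcal{U}$-limit $0\neq 1$. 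In particular $\eta\notin\operatorname{Aut}(\Pi)$ by Proposition~\ref{prop:inducedauto}, or just because it is not onto.

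\textbf{Showing $\eta$ is $n$-local.} Fix $a^{(1)},\dots,a^{(n)}\in\Pi$; I must produce a permutation $\rho$ of $\omega$ with $\eta(a^{(k)})=\widehat\rho(a^{(k)})$ for all $k$. For each $k$ let $r_k\in R$ be the $\mathcal{U}$-limit of $a^{(k)}$, and let $S_k=\{i\in\omega: a^{(k)}_i = r_k\}\in\mathcal{U}$. Set $Y = \bigcap_{k=1}^n S_k\in\mathcal{U}$, so $Y$ is infinite (indeed cofinite-or-larger, in $\mathcal{U}$), and $X=\omega\setminus Y$ is its complement. On $X$ the map $\widehat\sigma$ already agrees with $\eta$ coordinate-for-coordinate (both send the $i$-th coordinate to position $\sigma(i)$); on $Y$, all the finitely many vectors $a^{(k)}$ are \emph{constant} equal to $r_k$, and $\eta$ sends their $i$-th coordinate ($i\in Y$) to position $\sigma(i)$ AND additionally puts $r_k$ at position $0$. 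This is exactly the situation of Lemma~\ref{lem:sigmagivesrho}: it yields a permutation $\rho$ of $\omega$ with $\rho\!\restriction_X = \sigma\!\restriction_X$ and $\rho(Y) = \sigma(Y)\,\dot\cup\,\{0\}$. For such $\rho$, $\widehat\rho(a^{(k)})$ has $a^{(k)}_i$ at position $\rho(i)=\sigma(i)$ for $i\in X$, and for $i\in Y$ it has $r_k$ at the positions $\rho(i)\in\sigma(Y)\cup\{0\}$ — which matches $\eta(a^{(k)})$ on all of $\sigma(Y)$ and at position $0$. Hence $\widehat\rho(a^{(k)})=\eta(a^{(k)})$ for every $k$, as required.

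\textbf{Expected main obstacle.} The bookkeeping in the previous paragraph is the delicate point: one must verify that $\widehat\rho$ and $\eta$ truly agree at \emph{every} coordinate of \emph{every} $a^{(k)}$, and the subtlety is precisely that on $Y$ the vectors are constant, so it does not matter that $\rho$ shuffles $Y$ differently from $\sigma$ — this is what makes the ``extra'' coordinate $0$ reconcilable with a genuine permutation. Lemma~\ref{lem:sigmagivesrho} is tailored to absorb exactly this; so once the lemma is in hand the argument is a matter of carefully tracking indices. A secondary routine check is that $\eta$ is a well-defined unital $R$-algebra homomorphism, which rests on the standard facts that $\mathcal{U}$-limits into a finite (discrete compact) ring exist, are unique, and are additive and multiplicative; finiteness of $R$ is used essentially here, which is why the passage to infinite $R$ in Theorem~\ref{thm:main1}(b) requires the extra hypothesis $|P|\ge\mu_R$.
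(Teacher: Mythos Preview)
Your proposal is correct and follows essentially the same approach as the paper: both construct $\eta$ via a non-principal ultrafilter on $\omega$ together with a bijection $\sigma:\omega\to\omega\setminus\{0\}$ (the paper uses the specific shift $\sigma(i)=i+1$), define the $0$-th coordinate of $\eta(a)$ as the unique $r\in R$ with $\{i:a_i=r\}\in\mathcal{U}$ (your ``$\mathcal{U}$-limit'' is precisely the paper's $\varphi(a)$), and then invoke Lemma~\ref{lem:sigmagivesrho} on the intersection $Y=\bigcap_k\{i:a^{(k)}_i=r_k\}\in\mathcal{U}$ to produce the required permutation $\rho$. The only differences are notational.
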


\begin{proof}
Let $\mathcal{U}$ be a non-principal ultrafilter for the set $\omega$ that contains all
cofinite subsets of $\omega$. Let $a\in\Pi$. Then there exist disjoint sets
$[a]_r\subseteq\omega$, $r\in R$, such that $a=\sum_{r\in R}re_{[a]_r}$.
Since $\mathcal{U}$ is an ultrafilter, there exists exactly one
$s=:\varphi(a)\in R$ such that $u(a):=[a]_{s}\in \mathcal{U}$. It is easy to check that
$\varphi:\Pi\rightarrow R$ is an $R$-algebra homomorphism. Now define
$\eta:\Pi\rightarrow\Pi$ by
\[
\eta(a)=\varphi(a)e_{0}+\sum_{i\in \omega} a_{i}e_{i+1}
\]
for all $a=\sum_{i\in\omega}a_{i}e_{i} \in\Pi$.
Then $\eta$ is an injective unital $R$-algebra homomorphism, but not surjective since,
for example, $e_0$ has no preimage under $\eta$.

Now let $a\in\Pi$. Note that $u(a)\in \mathcal{U}$ is an infinite set.
Let $Y$ be any infinite subset of $u(a)$, and let $X=\omega\setminus Y$.
Apply Lemma \ref{lem:sigmagivesrho} to $\omega=X\,\dot\cup\,Y$ and the
\emph{shift map} $\sigma$ on $\omega$, where $\sigma(i)=i+1$ for all $i\in\omega$.
This yields a permutation $\rho$ of $\omega$ with associated $R$-algebra automorphism
$\widehat{\rho} \in \operatorname{Aut}(\Pi)$. Note that
\begin{align*}
\widehat{\rho}(a) &= \widehat{\rho}\left( \sum_{x\in X} a_{x}e_{x}+\sum_{y\in Y} a_ye_{y} \right)
= \widehat{\rho}\left( \sum_{x\in X} a_{x}e_{x} \right) + \widehat{\rho}\left( \sum_{y\in Y} \varphi(a)e_{y} \right)\\
&= \widehat{\rho}\left( \sum_{x\in X} a_{x}e_{x} \right) + \widehat{\rho}\left( \varphi(a)e_Y \right)
= \sum_{x\in X}a_{x}e_{\rho(x)}+ \varphi(a) e_{\rho(Y)}\\
&= \sum_{x\in X}a_{x}e_{\sigma(x)}+ \varphi(a) e_{\sigma(Y)\cup \{0\}}
= \varphi(a)e_0 + \left( \sum_{x\in X}a_{x}e_{\sigma(x)}+ \varphi(a) e_{\sigma(Y)} \right)\\
&= \varphi(a)e_0 + \widehat{\sigma}(a) = \eta(a).
\end{align*}
This shows that $\eta$ is a local automorphism.

Now let $a_{1},a_{2},\ldots ,a_{n}\in\Pi$ and consider the infinite set
$Y=\bigcap_{1\leq i\leq n}u(a_{i}) \in \mathcal{U}$. Note that the automorphism
$\widehat{\rho}$ constructed above will have the property that
$\eta (a_{i})=\widehat{\rho}(a_{i})$ for all $1\leq i\leq n$.
Thus $\eta$ is an $n$-local automorphism.
\end{proof}

We include the following immediate consequence of Lemma \ref{lem:finiteRnonsurjective}.

\begin{corollary} \label{cor:finiteRnonsurjective}
If $R$ is a finite indecomposable ring and $P$ an infinite set, then there exists an injective
unital $R$-algebra endomorphism $\eta$ of $\Pi=\prod_{x\in P}Re_x$ such that $\eta\neq\widehat{\rho}$
for all injective maps $\rho:P\rightarrow P$.
\end{corollary}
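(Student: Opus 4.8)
The plan is to bootstrap from Lemma~\ref{lem:finiteRnonsurjective}, which already provides the desired map when $P=\omega$, and to transplant that example onto an arbitrary infinite $P$ by letting it act as the identity outside a countably infinite piece. First I would fix a countably infinite subset $P_{0}=\{x_{i}:i\in\omega\}$ of $P$ and put $P_{1}=P\setminus P_{0}$, so that $\Pi$ factors as an $R$-algebra product $\Pi\cong\Pi_{0}\times\Pi_{1}$, where $\Pi_{0}=\prod_{i\in\omega}Re_{x_{i}}$ and $\Pi_{1}=\prod_{x\in P_{1}}Re_{x}$. The bijection $x_{i}\mapsto i$ identifies $\Pi_{0}$ with $\prod_{i\in\omega}Re_{i}$, and Lemma~\ref{lem:finiteRnonsurjective} then supplies a \emph{unital} injective $R$-algebra endomorphism $\eta_{0}$ of $\Pi_{0}$ that is not surjective; indeed $e_{x_{0}}$ has no preimage under $\eta_{0}$.

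Next I would set $\eta=\eta_{0}\times\operatorname{id}_{\Pi_{1}}$ on $\Pi\cong\Pi_{0}\times\Pi_{1}$. A one-line check shows that $\eta$ is a unital $R$-algebra endomorphism; it is injective because both factors are; and it is not surjective, since a preimage of $(e_{x_{0}},0)$ would force a preimage of $e_{x_{0}}$ under $\eta_{0}$. Finally, suppose $\eta=\widehat{\rho}$ for some injective map $\rho:P\rightarrow P$. Unitality of $\eta$ gives $\widehat{\rho}(1)=1$, i.e.\ $\sum_{x\in P}e_{\rho(x)}=\sum_{x\in P}e_{x}$, so $\rho(P)=P$; combined with injectivity, $\rho$ is a permutation of $P$, whence $\widehat{\rho}$ is bijective with inverse $\widehat{\rho^{-1}}$ (cf.\ Proposition~\ref{prop:inducedauto}). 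This contradicts the non-surjectivity of $\eta$, so no such $\rho$ exists.

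There is essentially no obstacle here; the only point requiring a moment's attention is the observation that a \emph{unital} induced endomorphism $\widehat{\rho}$ automatically has $\rho$ surjective, which is exactly what upgrades ``injective $\rho$'' to ``$\rho$ a permutation'' and thereby lets the non-surjectivity of $\eta$ deliver the contradiction. One could alternatively avoid the explicit factorization and rerun the ultrafilter construction of Lemma~\ref{lem:finiteRnonsurjective} using a non-principal ultrafilter on the countable subset $P_{0}$, but the reduction above seems cleanest.
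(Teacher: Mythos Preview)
Your proof is correct and takes a genuinely different route from the paper's. The paper does not split off a countable piece and extend by the identity; instead it enumerates all of $P$ as $\{x_\alpha : \alpha < |P|\}$, chooses a non-principal ultrafilter on $P$ itself to define $\varphi:\Pi\to R$ as in Lemma~\ref{lem:finiteRnonsurjective}, and sets
\[
\eta(a)=\varphi(a)e_{x_0}+\sum_{\alpha<|P|}a_{x_\alpha}e_{x_{\alpha+1}}
\]
via the ordinal-successor shift. Its argument that $\eta\neq\widehat{\rho}$ is also different from yours: rather than invoking unitality to force $\rho$ to be a permutation, the paper notes that $e_{x_0}\notin\eta(\Pi)$ while the projection onto the $e_{x_0}$-coordinate of $\eta$ is nonzero, and that no $\widehat{\rho}$ can exhibit both features simultaneously. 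The paper's construction has the virtue of being the template for the later generalisation to infinite rings (Theorem~\ref{thm:maincartprod2} and Corollary~\ref{cor:Rnonsurjective2}), where one genuinely needs the ultrafilter to live on all of $P$. Your reduction, by contrast, is the cleaner deduction from Lemma~\ref{lem:finiteRnonsurjective} as stated and makes unitality of $\eta$ immediate; indeed, the paper's ordinal shift misses the coordinates at limit ordinals, so for uncountable $P$ its $\eta$ is not literally unital without further adjustment---a subtlety your factorisation sidesteps entirely.
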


\begin{proof}
Let $R$ be finite, $P$ infinite, and $\{x_\alpha: \alpha < |P|\}$ an enumeration of $P$.
Let the map $\varphi:\Pi\rightarrow R$ be
defined via an ultrafilter on $P$ as in Lemma~\ref{lem:finiteRnonsurjective}, and define
\[
\eta(a)=\varphi (a)e_{x_0}+\sum_{\alpha\, <\, |P|}a_{x_\alpha}e_{x_{\alpha+1}}
\]
for all $a=\sum_{x\in P}a_{x}e_{x} \in\Pi$.
Then $\eta$ is an injective unital $R$-algebra endomorphism of $\Pi$. Moreover, $e_{x_0}\notin \eta(\Pi)$,
but $\eta$ followed by the natural projection onto the $e_{x_0}$-coordinate is non-zero.
This shows that $\eta\neq\widehat{\rho}$ for all maps $\rho:P\rightarrow P$.
\end{proof}

The following example illustrates an interesting variation of the construction provided in the proof of
Lemma \ref{lem:finiteRnonsurjective}.

\begin{example}
Let $R$ be a commutative ring with identity and consider the $R$-subalgebra
\[
A=\left<\, \sum_{i\in \omega} e_i,\, \bigoplus_{i\in \omega}Re_{i} \right>
\]
of $\Pi=\prod_{i\in \omega} Re_{i}$. Then each $a\in A$ has the form
\[
a=\sum_{i=0}^{n(a)} s_{i}e_{i}+\sum_{i\in \omega}a^{\ast}e_{i}
\]
for $s_{i}\in R$ and a unique $a^{\ast}\in R$. Now define an
$R$-linear map $\eta:A\rightarrow A$ by
\[
\eta(a)=a^{\ast}e_{0}+\sum_{i=0}^{n(a)} s_{i}e_{i+1}+\sum_{i\in \omega}a^{\ast}e_{i+1}
\]
for all $a\in A$. Note that $e_0$ is not in the image of $\eta$, and thus $\eta$ is not surjective. Let
$n>0$. We will show that $\eta$ is an $n$-local automorphism of
$A$. To this end, let $a_j\in A$ for $1\leq j\leq n$ and pick some integer $m$ with
$m>n(a_j)$ for all $1\leq j\leq n$. Define a
permutation $\rho:\omega\rightarrow\omega$ by $\rho(i)=i+1$ for all $m\neq
i<\omega$ and $\rho(m)=0$. Then $\rho$ induces an $R$-algebra automorphism
$\widehat{\rho}$ of the $R$-algebra $A$ and it is easy to check that $\eta(a_j)
=\widehat{\rho}(a_j)$ for all $1\leq j\leq n$, which shows that $\eta$ is
an $n$-local automorphism of $A$ which is not surjective and thus not an
$R$-algebra automorphism of $A$.

For fields $R$, we note another quite remarkable property of the local automorphism~$\eta$: Let $n>0$. Then $\eta^n(A) \cong A$.
Moreover, any $R$-algebra $\eta^n(A) \subseteq B \subseteq A$ is isomorphic to $A$, and there exist precisely
$2^n$ such intermediate $R$-algebras $B$.
\end{example}

Next we want to consider indecomposable rings $R$ of arbitrary size. This will need the following definition.

\begin{definition} \label{def:muR}
Let
\[
\mu_R = \left\{
  \begin{array}{ll}
     \aleph_0, & \mbox{\quad if $R$ is finite}, \\
     \mbox{the smallest cardinal with a nontrivial}\\
     \mbox{$|R|^+$-additive $0,1$\mbox-valued measure}, & \mbox{\quad if $R$ is infinite}.
  \end{array} \right.
\]
For $R$ infinite, we will set $\mu_R = \infty$ in case that such a cardinal does not exist.
\end{definition}

\begin{remark} \label{rem:muRprop}\mbox{}
\begin{itemize}
\item[(1)] For infinite $R$, we will briefly discuss the cardinal condition on $\mu_R$. First,
note that $\mu_R > |R|$ by definition, and that $\mu_R$ must allow a
nontrivial $\sigma$-additive $0,1$\mbox-valued measure. Let $\mu_0$ denote the smallest
such cardinal. It is well-known that this cardinal $\mu_0$ is measurable and inaccessible,
cf. \cite[Chapter~10]{Jech}. Thus, the existence of $\mu_R$ implies the existence of
measurable cardinals in the chosen model of set theory, and it is consistent with {\rm ZFC}
that no measurable cardinals may exist. In this case $\mu_R = \infty$ will apply, and
statements with respect to cardinals $\ge \mu_R$ will become void.
\item[(2)] Note, that $\mu: \mathcal{P}(P) \rightarrow \{0,1\}$
is a nontrivial $|R|^+$-additive measure on a set $P$ if and only if
\[
\mathcal{U}=\{X\subseteq P:\mu(X)=1\}
\]
is a non-principal $|R|^+$-complete ultrafilter on $P$, cf. \cite[Chapter~10]{Jech}.
Thus, in the following, all measure arguments are synonymous to ultrafilter arguments.
\end{itemize}
\end{remark}

We note another immediate consequence of $\mu_R$.

\begin{proposition} \label{prop:muRnontrivmeasure}
For any sets $R$ and $P$ the following are equivalent:
\begin{itemize}
\item[(1)] There exists a nontrivial $|R|^+$-additive $0,1$\mbox-valued measure on $P$.
\item[(2)] $|P| \ge \mu_R$.
\end{itemize}
\end{proposition}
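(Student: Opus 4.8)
The plan is to translate condition (1) into the language of ultrafilters and then to establish a simple monotonicity principle. By Remark~\ref{rem:muRprop}(2), condition (1) is equivalent to the existence of a non-principal $|R|^+$-complete ultrafilter on $P$. Recall that for infinite $R$ the cardinal $\mu_R$ is by definition the least cardinal carrying such an ultrafilter, while for finite $R$ we have $\mu_R = \aleph_0$ and $|R|^+$-additivity reduces to ordinary finite additivity. The two facts I want are: (i) carrying a non-principal $|R|^+$-complete ultrafilter depends only on the cardinality of the underlying set; and (ii) this property passes from a set $S$ to any set $S'$ with $|S'| \ge |S|$.

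Fact (i) is immediate, since any bijection transports an ultrafilter to an ultrafilter of the same type. For fact (ii), fix an injection $f : S \hookrightarrow S'$ and a non-principal $|R|^+$-complete ultrafilter $\mathcal{U}$ on $S$, and set $\mathcal{V} = \{X \subseteq S' : f^{-1}(X) \in \mathcal{U}\}$. Since $f^{-1}$ commutes with complements and with arbitrary intersections, $\mathcal{V}$ is again an $|R|^+$-complete ultrafilter on $S'$; and it is non-principal because $f^{-1}(\{p\})$ is empty or a singleton for every $p \in S'$, and neither $\emptyset$ nor a singleton of $S$ lies in $\mathcal{U}$. Thus $S'$ carries the desired ultrafilter.

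With this in hand both implications are short. For (2)$\Rightarrow$(1): if $\mu_R = \infty$ the hypothesis $|P|\ge\mu_R$ is vacuous; otherwise a set of cardinality $\mu_R$ carries a non-principal $|R|^+$-complete ultrafilter (by Definition~\ref{def:muR} when $R$ is infinite, and by taking any non-principal ultrafilter on $\omega$ when $R$ is finite), so fact (ii) applied with that set and $S' = P$ yields (1). For (1)$\Rightarrow$(2): such an ultrafilter forces $P$ to be infinite, since a finite set admits no non-principal ultrafilter; then fact (i) shows the cardinal $|P|$ carries one as well, whence $|P| \ge \aleph_0 = \mu_R$ when $R$ is finite, and $|P| \ge \mu_R$ by the minimality in Definition~\ref{def:muR} when $R$ is infinite. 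I expect no substantial obstacle here; the only points needing care are the routine verification that the pushforward $\mathcal{V}$ inherits $|R|^+$-completeness and non-principality, and the bookkeeping of the degenerate case where $R$ is finite, so that ``$|R|^+$-additive'' says no more than ``finitely additive''.
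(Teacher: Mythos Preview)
Your proposal is correct and follows essentially the same approach as the paper: both obtain the measure on $P$ by pushing forward from a subset of size $\mu_R$, and both treat (1)$\Rightarrow$(2) as immediate from the minimality in Definition~\ref{def:muR}. The only cosmetic difference is that you work in ultrafilter language via Remark~\ref{rem:muRprop}(2) and push forward along an abstract injection, whereas the paper stays in measure language and simply sets $\mu(X)=\mu'(P'\cap X)$ for a chosen $P'\subseteq P$ with $|P'|=\mu_R$---the same construction under the inclusion map.
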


\begin{proof}
For $R$ finite, it is well-known that $P$ allows a non-principal ultrafilter if and only
if $|P| \ge \aleph_0$. For $R$ infinite, (1) evidently implies (2).

Let now $R$ be infinite and $|P| \ge \mu_R$. Choose a subset $P'\subseteq P$ of cardinality
$|P'|=\mu_R$ and a nontrivial $|R|^+$-additive measure $\mu': \mathcal{P}(P') \rightarrow \{0,1\}$
on $P'$. Then $\mu(X)=\mu'(P'\cap X)$ for $X\subseteq P$ defines a nontrivial $|R|^+$-additive
$0,1$\mbox-valued measure $\mu$ on $P$.
\end{proof}

We are now ready for the main result of this section.

\begin{theorem} \label{thm:maincartprod}
For any indecomposable ring $R$ and $\Pi=\prod_{x\in P}Re_{x}$ the following are equivalent:
\begin{itemize}
\item[(1)] $\operatorname{LAut}(\Pi) =  \operatorname{Aut}(\Pi)$.
\item[(2)] $|P| < \mu_R$.
\end{itemize}
\end{theorem}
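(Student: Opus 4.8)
The plan is to prove the two implications separately. The direction $\neg(2)\Rightarrow\neg(1)$, i.e. $|P|\ge\mu_R$ implies $\operatorname{LAut}(\Pi)\neq\operatorname{Aut}(\Pi)$, should follow by generalizing the ultrafilter construction of Lemma~\ref{lem:finiteRnonsurjective} from $R$ finite (where $\mu_R=\aleph_0$ and any non-principal ultrafilter on $\omega$ works) to arbitrary indecomposable $R$. By Proposition~\ref{prop:muRnontrivmeasure}, the hypothesis $|P|\ge\mu_R$ gives a non-principal $|R|^+$-complete ultrafilter $\mathcal{U}$ on $P$. The key point is that $|R|^+$-completeness lets us repeat the ``one $s\in R$ with $[a]_s\in\mathcal{U}$'' argument from Lemma~\ref{lem:finiteRnonsurjective} even though $a=\sum_{r\in R}re_{[a]_r}$ now splits $P$ into possibly infinitely many pieces indexed by $R$: completeness guarantees exactly one piece lies in $\mathcal{U}$, so $\varphi(a):=s$ is well-defined and one checks $\varphi:\Pi\to R$ is a unital $R$-algebra homomorphism. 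Fixing a distinguished point $x_0\in P$ and a bijection-with-shift structure on $P$ (enumerate $P$, or more carefully use that $P$ is infinite), define $\eta$ by moving the $e_{x_0}$-slot's value to $\varphi(a)$ and shifting everything forward, exactly as in Lemma~\ref{lem:finiteRnonsurjective} and Corollary~\ref{cor:finiteRnonsurjective}. Then $\eta$ is an injective unital $R$-algebra endomorphism that is not surjective (so not in $\operatorname{Aut}(\Pi)$), and the same ``pass to $Y=\bigcap_{i\le n}u(a_i)\in\mathcal{U}$, which is still in $\mathcal{U}$ by $|R|^+$-completeness and is infinite, then invoke Lemma~\ref{lem:sigmagivesrho}'' argument shows $\eta$ is an $n$-local automorphism for every $n$; in particular $\eta\in\operatorname{LAut}(\Pi)\setminus\operatorname{Aut}(\Pi)$.

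For the direction $(2)\Rightarrow(1)$, i.e. $|P|<\mu_R$ implies every local automorphism of $\Pi$ is an $R$-algebra automorphism, I would argue as follows. Let $\eta\in\operatorname{LAut}(\Pi)$. By Proposition~\ref{prop:triviallautoproperties}, $\eta$ is injective and carries the set $E=\{e_x:x\in P\}$ of primitive idempotents into itself, so $\eta(e_x)=e_{\rho(x)}$ for some injective map $\rho:P\to P$; since each $e_x$ is also a value of some automorphism applied to some element... actually injectivity of $\rho$ is all we get for free, so the real content is to show $\rho$ is onto and $\eta=\widehat\rho$. The strategy is: for each $a=\sum_x a_xe_x\in\Pi$, consider the partition of $P$ into the fibers $[a]_r=\{x:a_x=r\}$, $r\in R$. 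Since $\eta$ agrees on $a$ with some $\widehat\sigma\in\operatorname{Aut}(\Pi)$ (permutation $\sigma$ depending on $a$), we have $\eta(a)=\sum_x a_xe_{\sigma(x)}$, so $\eta(a)$ has the same ``value multiset'' as $a$; combining with $\eta(a+e_y)$ agreeing with another permutation (the coordinate-comparison trick from the proof of Theorem~\ref{thm:surjectivelautisaut}) pins down the $e_{\rho(y)}$-coordinate of $\eta(a)$ to equal $a_y$ whenever $a$ satisfies the genericity condition \eqref{eq:ycoordinate1}. The decomposition $a=b+c$ with $b,c$ both generic then extends this to all $a$ — but this is exactly the content of Theorem~\ref{thm:surjectivelautisaut}, which requires $e_x\in\operatorname{Im}(\eta)$ for all $x$, i.e. surjectivity of $\rho$. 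So the crux reduces to proving $\rho$ surjective under the hypothesis $|P|<\mu_R$.

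The main obstacle, then, is showing that $|P|<\mu_R$ forces $\rho$ to be surjective (equivalently, forces $\eta$ to be surjective). Here is the plan. Suppose $\rho$ is not surjective, say $z\in P\setminus\rho(P)$. For a subset $X\subseteq P$, local-ness applied to $e_X$ gives a permutation $\sigma_X$ with $\eta(e_X)=e_{\sigma_X(X)}$, so $\eta(e_X)$ is again a $\{0,1\}$-valued element, i.e. $\eta(e_X)=e_{f(X)}$ for a set $f(X)\subseteq P$ with $|f(X)|=|X|$ and $|P\setminus f(X)|=|P\setminus X|$; moreover $f$ is a $\cap$-homomorphism (because $e_Xe_Y=e_{X\cap Y}$ and one can compare on two-element inputs using $2$-generic witnesses, or extract it from $R$-linearity together with the idempotent structure). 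One shows $f$ is a Boolean-algebra embedding of $\mathcal{P}(P)$ into $\mathcal{P}(P)$ that is not onto $z$: the set $\mathcal{U}=\{X\subseteq P: z\in f(X)\}$ is then a proper ultrafilter on $P$ (upward closed, closed under finite intersection since $z\in f(X)\cap f(Y)=f(X\cap Y)$, and ultra since $z\in f(X)$ or $z\in f(P\setminus X)=P\setminus f(X)$), and it is non-principal because $f(\{x\})=\{\rho(x)\}\not\ni z$ for each singleton. The decisive step is to show $\mathcal{U}$ is $|R|^+$-complete: given a partition $P=\dot\bigcup_{r\in R}X_r$ into $\le|R|$ pieces, the element $a=\sum_r re_{X_r}$ must satisfy $\eta(a)=\widehat\sigma(a)$ for some permutation $\sigma$, and chasing the $e_z$-coordinate of $\eta(a)$ — it equals $a_{x}$ for the unique $x$ with $\sigma(x)=z$, hence equals some single $r_0\in R$ — forces $z\in f(X_{r_0})$, i.e. $X_{r_0}\in\mathcal{U}$, so one piece of any $\le|R|$-partition lies in $\mathcal{U}$; that is precisely $|R|^+$-completeness. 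A non-principal $|R|^+$-complete ultrafilter on $P$ yields, via Remark~\ref{rem:muRprop}(2) and Proposition~\ref{prop:muRnontrivmeasure}, that $|P|\ge\mu_R$, contradicting (2). Hence $\rho$ is surjective, Theorem~\ref{thm:surjectivelautisaut} applies, and $\eta\in\operatorname{Aut}(\Pi)$. Getting the $\cap$-homomorphism property of $f$ and the clean ``one $r_0$'' coordinate chase rigorous — essentially re-running the genericity/decomposition bookkeeping of Theorem~\ref{thm:surjectivelautisaut} in this slightly more delicate setting where $\rho$ is only assumed injective — is where the care is needed, and it is the step I expect to occupy most of the work.
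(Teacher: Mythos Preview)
Your overall strategy matches the paper's: the direction $|P|\ge\mu_R\Rightarrow\operatorname{LAut}(\Pi)\neq\operatorname{Aut}(\Pi)$ is exactly Theorem~\ref{thm:maincartprod2}, and for the converse you correctly reduce to showing that $\rho$ is surjective by extracting a non-principal $|R|^+$-complete ultrafilter from a point $z\notin\operatorname{Im}(\rho)$. This is precisely what the paper does in Lemma~\ref{lem:maincartprod1}, feeding into Lemma~\ref{lem:gammaspecialprop}.

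The one place your plan goes astray is the proposed mechanism: you aim to show that $f$ is a $\cap$-homomorphism (a Boolean-algebra embedding) and then read off the filter axioms from $f(X\cap Y)=f(X)\cap f(Y)$. This is the wrong target, and almost certainly unprovable here --- $\eta$ is only $1$-local, hence not multiplicative, so there is no reason for $\eta(e_{X\cap Y})=\eta(e_X)\eta(e_Y)$. The paper sidesteps this entirely by working with the $R$-linear functional $\gamma=\pi_z\circ\eta:\Pi\to R$, which satisfies $\gamma(a)\in\{a_x:x\in P\}$ and $\gamma(e_x)=0$ for all $x$ (your observations give exactly this). From these two properties alone, Lemma~\ref{lem:gammaspecialprop} builds the measure $\mu(X)=\gamma(e_X)$ and proves the filter/monotonicity axioms and $|R|^+$-additivity using only \emph{linearity}: e.g.\ your ``forces $z\in f(X_{r_0})$'' step is made rigorous by assuming $\mu(X_\alpha)=0$ for all $\alpha$, choosing distinct nonzero labels $a_\alpha$, finding $\gamma(a)=a_\beta$, and then computing $\gamma(a-a_\beta e_{X_\beta})=a_\beta$ by linearity while simultaneously $\gamma(a-a_\beta e_{X_\beta})\in\{0,a_\alpha:\alpha\neq\beta\}$ --- a single subtraction, no multiplicativity needed. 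One minor addendum: your final appeal to Theorem~\ref{thm:surjectivelautisaut} requires $|R|\ge 3$; when $R$ is finite (so $P$ is finite), handle it directly via $\eta(a)=\sum_x a_x\eta(e_x)=\widehat\rho(a)$ by finite linearity, as the paper does in Lemma~\ref{lem:maincartprod1}.
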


This theorem will be the collective achievement of a series of intermediate results.
First, we would like to show that the existence of a nontrivial local automorphism
implies $|P| \ge \mu_R$. The following auxiliary result will be crucial.

\begin{lemma} \label{lem:gammaspecialprop}
Let $R$ be an infinite indecomposable ring and
$\Pi=\prod_{x\in P}Re_{x}$. Let $\gamma :\Pi\rightarrow R$ be some $R$-linear map such that
\begin{align} \label{eq:gammaspecialprop}
\gamma(a)\in\{a_{x}:x\in P\} \mbox{ for all } a =\sum_{x\in P} a_{x}e_{x}\in \Pi \mbox{ and }
\gamma(e_{x})=0 \mbox{ for all } x\in P.
\end{align}
Then $|P| \geq\mu_R$.
\end{lemma}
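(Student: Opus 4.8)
The plan is to extract from the map $\gamma$ a non-principal $|R|^+$-complete ultrafilter on $P$, which by Proposition \ref{prop:muRnontrivmeasure} gives $|P| \ge \mu_R$. The natural candidate is
\[
\mathcal{U} = \{ X \subseteq P : \gamma(e_X) = 1 \}.
\]
First I would check that for every $X \subseteq P$ we have $\gamma(e_X) \in \{0,1\}$: indeed $e_X$ is an idempotent, so $e_X^2 = e_X$, and $\gamma$ being $R$-linear together with property \eqref{eq:gammaspecialprop} forces $\gamma(e_X) \in \{ (e_X)_x : x \in P\} \subseteq \{0,1\}$; moreover $\gamma(e_P) = \gamma(1)$, and since $1 = e_P$ has all coordinates equal to $1$, we get $\gamma(e_P) = 1$, so $P \in \mathcal{U}$ and $\emptyset \notin \mathcal{U}$ (as $\gamma(0)=0$). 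The condition $\gamma(e_x) = 0$ for all $x \in P$ says exactly that $\mathcal{U}$ contains no singletons, which will give non-principality once we know $\mathcal{U}$ is an ultrafilter closed under finite modifications.

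Next I would verify the ultrafilter axioms. For complementation, note $e_X + e_{P\setminus X} = e_P = 1$, so $\gamma(e_X) + \gamma(e_{P\setminus X}) = 1$ in $R$; since both summands lie in $\{0,1\}$, exactly one equals $1$, i.e.\ exactly one of $X, P\setminus X$ is in $\mathcal{U}$. For upward closure and intersections: if $X \subseteq Y$ then $e_Y = e_X + e_{Y\setminus X}$ gives $\gamma(e_Y) \ge \gamma(e_X)$ in the sense that $\gamma(e_X)=1 \Rightarrow \gamma(e_Y)=1$; closure under finite intersection then follows from upward closure plus complementation in the usual way, or directly from $e_{X\cap Y} = e_X \cdot e_Y$ once multiplicativity of $\gamma$ on idempotents is established — but $\gamma$ need not be multiplicative, so I would argue the intersection property purely from the lattice identities on the $e_X$ and the two-valuedness, exactly as one derives it for any finitely additive $\{0,1\}$-measure.

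The main point — and the place where the hypothesis that $R$ is \emph{infinite} and that $\gamma$ takes values among the coordinates gets used in an essential way — is $|R|^+$-completeness: given a family $\{X_i : i \in I\}$ with $|I| \le |R|$ and all $X_i \in \mathcal{U}$, I must show $\bigcap_{i} X_i \in \mathcal{U}$. Equivalently, if $\{Y_i : i \in I\}$ with $|I| \le |R|$ are pairwise disjoint subsets of $P$, at most one has $\gamma(e_{Y_i}) = 1$. Suppose not all $Y_i \notin \mathcal{U}$, i.e. suppose the $Y_i$ cover $P$ (after replacing one of them by its relative complement we may assume $\bigsqcup_i Y_i = P$) yet no single $Y_i \in \mathcal{U}$. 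Here is the trick: choose an injection $i \mapsto r_i$ of the index set $I$ into $R$ (possible since $|I| \le |R|$), and form $a = \sum_{i \in I} r_i\, e_{Y_i} \in \Pi$. Its coordinate set is $\{ a_x : x \in P \} = \{ r_i : Y_i \neq \emptyset\}$. Now $\gamma(a)$ must be one of these values, say $\gamma(a) = r_{i_0}$. I would then apply $\gamma$ to $a - r_{i_0} e_P = \sum_{i \neq i_0} (r_i - r_{i_0}) e_{Y_i}$, whose coordinate set is $\{ r_i - r_{i_0} : i \neq i_0, Y_i \neq \emptyset \} \cup \{0\}$ if $Y_{i_0}\neq\emptyset$; by $R$-linearity $\gamma$ of this element equals $\gamma(a) - r_{i_0}\gamma(e_P) = 0$. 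The remaining work is to leverage two-valuedness of $\gamma$ on the idempotents $e_{Y_i}$ together with these linear relations to force some $\gamma(e_{Y_{i}}) = 1$, contradicting the assumption; concretely, writing $v_i = \gamma(e_{Y_i}) \in \{0,1\}$, linearity gives $\sum_i r_i v_i = r_{i_0}$ and $\sum_i v_i = 1$ (from $\sum_i e_{Y_i} = e_P$), and since each $v_i \in \{0,1\}$ these two equations force $v_{i_0} = 1$. I expect this final bookkeeping — reconciling the $\{0,1\}$-valued constraints with the $R$-linear relations, and handling the reduction to a disjoint cover — to be the one genuinely delicate step; everything else is the routine verification that a two-valued finitely additive measure vanishing on points is a non-principal ultrafilter.
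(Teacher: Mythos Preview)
Your overall strategy --- extract a $\{0,1\}$-valued measure $\mu(X)=\gamma(e_X)$ and show it is non-principal and $|R|^+$-additive --- matches the paper's. However, two of your steps do not go through as stated.

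\textbf{Monotonicity and finite intersections.} From $e_Y=e_X+e_{Y\setminus X}$ you only get $\gamma(e_Y)=\gamma(e_X)+\gamma(e_{Y\setminus X})$ in $R$, not an inequality. If $\gamma(e_X)=\gamma(e_{Y\setminus X})=1$ and $\operatorname{char}(R)=2$, this gives $\gamma(e_Y)=0$; so ``$\gamma(e_X)=1\Rightarrow\gamma(e_Y)=1$'' is unjustified. The ``usual'' two-valued-measure argument works over $\mathbb{Z}$, not over an arbitrary ring. The missing step is exactly what the paper proves first: two \emph{disjoint} sets cannot both have $\gamma$-value $1$. This uses $|R|\ge 3$ (here: $R$ infinite) in an essential way --- pick $f\neq 0$ and $g\notin\{0,-f\}$, look at $\gamma(fe_X+ge_Y)=f+g$, and derive $f+g\in\{0,f,g\}$, a contradiction. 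Only after this does monotonicity follow via complementation.

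\textbf{$|R|^+$-additivity.} Your ``concretely'' paragraph applies $R$-linearity to the \emph{infinite} sums $\sum_i r_i e_{Y_i}$ and $\sum_i e_{Y_i}$ to obtain $\sum_i r_i v_i=r_{i_0}$ and $\sum_i v_i=1$; but $\gamma$ is only $R$-linear, and there is no reason either infinite sum of ring elements is defined, let alone equal to $\gamma$ of the corresponding element of $\Pi$. Your earlier idea of subtracting $r_{i_0}e_P$ is legitimate (two terms), but it only yields $\gamma(a-r_{i_0}e_P)=0$, and the coordinate set of $a-r_{i_0}e_P$ already contains $0$ (at the $Y_{i_0}$-coordinates), so nothing is forced. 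The fix --- and this is what the paper does --- is to subtract $r_{i_0}e_{Y_{i_0}}$ instead: with all $r_i$ chosen distinct \emph{and nonzero}, set $b=a-r_{i_0}e_{Y_{i_0}}$. Linearity (now a two-term computation) gives $\gamma(b)=\gamma(a)-r_{i_0}\gamma(e_{Y_{i_0}})=r_{i_0}-r_{i_0}\cdot 0=r_{i_0}$, while the coordinate property forces $\gamma(b)\in\{0\}\cup\{r_i:i\neq i_0\}$, a contradiction.
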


\begin{proof}
For $X\subseteq P$ define $e_{X}=\sum_{x\in X}e_{x}$. Then
$e_{\emptyset}=0$, and $e=e_{P}$ is the identity element of $\Pi$. Note that
by \eqref{eq:gammaspecialprop} we have $\gamma(e_{X})\in\{0,1\}$ for all $X\subseteq P$.
We will identify $0,1 \in R$ with their real-valued conterparts,
and define $\mu: \mathcal{P}(P) \rightarrow \{0,1\}$ by $\mu(X) = \gamma(e_X)$ for all $X\subseteq P$.
\medskip

To prove $|P| \geq\mu_R$, we will show that $\mu$ is a nontrivial $|R|^+$-additive measure on $P$:\\
Obviously, by \eqref{eq:gammaspecialprop},
\begin{align} \label{eq:muismeasure1}
\mu(\emptyset)=\gamma(0)=0,\: \mu(P)=\gamma(e)=1, \mbox{ and } \mu( \{x\}) =\gamma(e_x) =0 \mbox{\quad for all } x\in P.
\end{align}
Let $X\subseteq P$. Then
\[
1=\gamma(e)=\gamma(e_{X}+e_{P\setminus X})= \gamma(e_{X})+\gamma(e_{P\setminus X})= \mu(X)+\mu(P\setminus X)
\]
Thus
\begin{align} \label{eq:muismeasure2}
\mu(P\setminus X) = 1-\mu(X) \mbox{\quad for all } X\subseteq P.
\end{align}
We claim that
\begin{align} \label{eq:muismeasure3}
\mu(X) = \mu(Y) =1 \mbox{ for } X,Y\subseteq P \mbox{ implies } X\cap Y \not= \emptyset.
\end{align}
Assume $\mu(X) = \mu(Y) =1$ and $X\cap Y = \emptyset$.
Choose $f\in R\setminus \{0\}$ and $g\in R\setminus \{0,-f\}$. Then
\[
f+g=f\mu(X)+g\mu(Y)=f\gamma(e_{X})+g\gamma(e_{Y})=\gamma(fe_{X}+ge_{Y})\in\{0,f,g\}
\]
by \eqref{eq:gammaspecialprop}, and thus
$f+g\in\{0,f,g\}$, a contradiction to our choice of $f$ and $g$.\\
We next claim that
\begin{align} \label{eq:muismeasure4}
\mu(X) \le \mu(Y) \mbox{\quad for all } X\subseteq Y\subseteq P.
\end{align}
Assume $\mu(X) > \mu(Y)$, thus $\mu(X)=1$ and $\mu(Y)=0$. With \eqref{eq:muismeasure2}
we have $\mu(P\setminus Y)=1=\mu(X)$ with $(P\setminus Y)\cap X= \emptyset$,
a contradiction to \eqref{eq:muismeasure3}.

So far, we have shown that $\mu$ is a nontrivial measure on $P$.
It remains to show that $\mu$ is $|R|^+$-additive:

To this end, let $\kappa \le |R|$ be a cardinal and $X_\alpha \subseteq P$
for $\alpha < \kappa$ be mutually disjoint sets with $X=\bigcup_{\alpha < \kappa} X_\alpha$,
and assume that $\mu(X)=1$ and $\mu(X_\alpha)=0$ for all $\alpha < \kappa$.
Set $X_\kappa = P \setminus X$, and observe that $\mu(X_\kappa) = 0$ by \eqref{eq:muismeasure2}.
Thus, we have a partition
\begin{align} \label{eq:muismeasure5}
P=\bigcup_{\alpha \le \kappa} X_\alpha \mbox{\quad with \quad} \mu(X_\alpha)=0 \mbox{ for all } \alpha \le \kappa.
\end{align}
Pick $a_\alpha \in R$ such that
$0\neq a_\alpha \neq a_\beta$ for all $\alpha \neq \beta \le \kappa$ and let
$a=\sum_{\alpha \le \kappa}a_\alpha e_{X_\alpha}$. By \eqref{eq:gammaspecialprop} there is a
unique $\beta$ such that $\gamma(a)=a_\beta$. Now define
\[
b=a -a_\beta e_{X_\beta} =\sum_{\substack{\alpha \le \kappa\\ \alpha \not= \beta}}a_\alpha e_{X_\alpha}.
\]
Then $\gamma(b)\in\{0, a_\alpha:\beta \neq \alpha \le \kappa \}$ with \eqref{eq:gammaspecialprop}, but
\[
\gamma(b)= \gamma(a -a_\beta e_{X_\beta}) = \gamma(a)- a_\beta \gamma(e_{X_\beta})
= \gamma(a)- a_\beta \mu(X_\beta) = \gamma(a) = a_\beta
\]
with \eqref{eq:muismeasure5}, a contradiction to our choice of $a_\beta$.
\end{proof}

We are all set for proving that Property (2) implies Property (1) in Theorem~\ref{thm:maincartprod}.

\begin{lemma} \label{lem:maincartprod1}
Let $R$ be an indecomposable ring such that $\operatorname{LAut}(\Pi) \not= \operatorname{Aut}(\Pi)$
for $\Pi=\prod_{x\in P}Re_{x}$. Then $|P| \ge \mu_R$.
\end{lemma}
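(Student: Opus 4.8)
The plan is to extract, from a local automorphism $\eta \in \operatorname{LAut}(\Pi) \setminus \operatorname{Aut}(\Pi)$, an $R$-linear map $\gamma : \Pi \to R$ satisfying the two conditions in \eqref{eq:gammaspecialprop}, so that Lemma~\ref{lem:gammaspecialprop} delivers $|P| \ge \mu_R$ at once. First I would clear away the degenerate cases. By Proposition~\ref{prop:triviallautoproperties}, $\eta$ is injective and maps the set $E = \{e_x : x \in P\}$ of primitive idempotents into itself, so $\eta(e_x) = e_{\rho(x)}$ for some injective map $\rho : P \to P$. If $P$ is finite, then $\rho$ is a permutation and, by $R$-linearity, $\eta\bigl(\sum_{x \in P} a_x e_x\bigr) = \sum_{x \in P} a_x e_{\rho(x)} = \widehat{\rho}(a)$, forcing $\eta = \widehat{\rho} \in \operatorname{Aut}(\Pi)$, contrary to hypothesis; hence $P$ is infinite. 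If moreover $R$ is finite, then $\mu_R = \aleph_0 \le |P|$ and we are done, so from now on I would assume $R$ infinite and, in particular, $|R| \ge 3$.

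Next I would locate a primitive idempotent outside the image of $\eta$. Since $\eta$ is not an $R$-algebra automorphism and $|R| \ge 3$, Theorem~\ref{thm:surjectivelautisaut} yields some $y_0 \in P$ with $e_{y_0} \notin \operatorname{Im}(\eta)$; and then $y_0 \notin \operatorname{Im}(\rho)$, since $y_0 = \rho(x)$ would give $e_{y_0} = \eta(e_x) \in \operatorname{Im}(\eta)$. I expect this step --- upgrading the failure of $\eta$ to be an automorphism to the stronger statement that some primitive idempotent $e_{y_0}$ lies outside $\operatorname{Im}(\eta)$, which is precisely what Theorem~\ref{thm:surjectivelautisaut} provides --- to be the main point; everything after it is bookkeeping.

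Finally, I would define $\gamma : \Pi \to R$ by letting $\gamma(a)$ be the $e_{y_0}$-coordinate of $\eta(a)$ in its standard representation. Then $\gamma$ is $R$-linear, being the composite of $\eta$ with the projection of $\Pi$ onto its $y_0$-th factor. To verify \eqref{eq:gammaspecialprop}: since $\rho(x) \ne y_0$ for all $x$, we get $\gamma(e_x) = (e_{\rho(x)})_{y_0} = 0$; and for arbitrary $a = \sum_{x \in P} a_x e_x$, the fact that $\eta$ is a local automorphism gives some $\varphi \in \operatorname{Aut}(\Pi)$ with $\eta(a) = \varphi(a)$, and by Proposition~\ref{prop:inducedauto} we may write $\varphi = \widehat{\sigma}$ for a permutation $\sigma$ of $P$, so $\eta(a) = \sum_{x \in P} a_x e_{\sigma(x)}$ and $\gamma(a) = a_{\sigma^{-1}(y_0)} \in \{a_x : x \in P\}$. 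Hence $\gamma$ satisfies the hypotheses of Lemma~\ref{lem:gammaspecialprop}, which then gives $|P| \ge \mu_R$, as required. The only things to double-check along the way are the injectivity of $\rho$ and that the finite-$P$ and finite-$R$ reductions are carried out before Lemma~\ref{lem:gammaspecialprop} (which presupposes $R$ infinite) is invoked.
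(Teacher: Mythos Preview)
Your proposal is correct and follows essentially the same approach as the paper's own proof: reduce to $P$ infinite and $R$ infinite, use Theorem~\ref{thm:surjectivelautisaut} to find $y_0 \notin \operatorname{Im}(\rho)$, then compose $\eta$ with the projection onto the $y_0$-coordinate to obtain a map $\gamma$ satisfying \eqref{eq:gammaspecialprop}, and invoke Lemma~\ref{lem:gammaspecialprop}. The only cosmetic difference is that the paper argues directly that $\rho$ surjective would force $\eta = \widehat{\rho} \in \operatorname{Aut}(\Pi)$, whereas you take the contrapositive of Theorem~\ref{thm:surjectivelautisaut} to locate $e_{y_0} \notin \operatorname{Im}(\eta)$; these are logically equivalent.
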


\begin{proof}
The inclusion $\operatorname{Aut}(\Pi) \subseteq \operatorname{LAut}(\Pi)$ is obvious.
Thus $\operatorname{LAut}(\Pi) \not= \operatorname{Aut}(\Pi)$ implies the existence
of some $\eta \in \operatorname{LAut}(\Pi) \setminus \operatorname{Aut}(\Pi)$.

Let $E=\{e_{x}:x\in P\}$. Then $E$ is the set of all primitive idempotents of the
$R$-algebra $\Pi$ and thus $\eta(E)\subseteq E$. We infer that there exists a
map $\rho:P\rightarrow P$ such that $\eta(e_{x})=e_{\rho(x)}$ for all $x\in P$.
Since $\eta$ is injective, the map $\rho$ is injective.

Suppose, for the moment, that $P$ is finite. Then for all
$a =\sum_{x\in P} a_{x}e_{x}\in \Pi$ holds
\[
\eta(a) =\eta\left( \sum_{x\in P} a_{x}e_{x} \right) = \sum_{x\in P} \eta(a_{x}e_{x})
= \sum_{x\in P} a_{x}\eta(e_{x}) = \sum_{x\in P} a_{x}e_{\rho(x)}= \widehat\rho(a),
\]
thus $\eta = \widehat\rho$.
Furthermore, $E$ is finite, and $\eta(E)\subseteq E$ implies $\eta(E)= E$. Thus $\rho$ is a
permutation of $P$, and $\eta = \widehat\rho \in \operatorname{Aut}(\Pi)$ with
Proposition \ref{prop:inducedauto}, a contradiction.

Therefore, $P$ is infinite. For $R$ finite, this implies $|P| \ge \aleph_0= \mu_R$ and the
proof is complete. Hence, let $R$ be infinite.

Suppose, for the moment, that $\rho$ is surjective. Then $\eta=\widehat \rho \in \operatorname{Aut}(\Pi)$
follows as in Theorem \ref{thm:surjectivelautisaut}, a contradiction.

Therefore, $\rho$ is non-surjective, and we may pick some $y \in P\setminus \operatorname{Im}(\rho)$.
Let $\pi:\Pi\rightarrow Re_y\cong R$ be the natural projection onto the $e_y$-coordinate.
Let $\gamma=\pi\circ \eta$. Since $\eta$ is a local automorphism, there exists
for each $a =\sum_{x\in P} a_{x}e_{x}\in \Pi$ some permutation
$\rho_a$ such that $\eta(a)=\widehat\rho_a(a)=\sum_{x\in P}a_xe_{\rho_a(x)}$. Thus
\begin{align} \label{eq:gammaspecialprop1}
\gamma(a)=a_{\rho_a^{-1}(y)}\in \{a_{x}:x\in P\}.
\end{align}
Furthermore, for all  $x \in P$
\begin{align} \label{eq:gammaspecialprop2}
\gamma(e_x)=\pi(\eta(e_x)) =\pi(e_{\rho(x)})=0
\end{align}
since $y\notin\operatorname{Im}(\rho)$. With \eqref{eq:gammaspecialprop1} and
\eqref{eq:gammaspecialprop2}, the $R$-linear map $\gamma$ has Property \eqref{eq:gammaspecialprop},
and Lemma \ref{lem:gammaspecialprop} yields $|P| \ge \mu_R$.
\end{proof}

Next follows another generalization of the ultrafilter construction of Lemma \ref{lem:finiteRnonsurjective}.

\begin{theorem} \label{thm:maincartprod2}
Let $R$ be an indecomposable ring and $P$ be a set with $|P| \ge \mu_R$.
Then for $\Pi=\prod_{x\in P}Re_{x}$ there exists some unital
$\eta\in \operatorname{End}(\Pi)$ such that $\eta$ is an $n$-local
automorphism for all $n>0$, but not surjective. In particular,
$\eta$ is not an $R$-algebra automorphism of $\Pi$.
\end{theorem}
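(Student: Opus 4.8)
The plan is to generalize the construction from Lemma~\ref{lem:finiteRnonsurjective} to an arbitrary set $P$ with $|P| \ge \mu_R$, using a suitable ultrafilter in place of the cofinite one on $\omega$. By Proposition~\ref{prop:muRnontrivmeasure}, the hypothesis $|P| \ge \mu_R$ gives a nontrivial $|R|^+$-additive $0,1$-valued measure on $P$, equivalently (Remark~\ref{rem:muRprop}(2)) a non-principal $|R|^+$-complete ultrafilter $\mathcal{U}$ on $P$. First I would fix, once and for all, a partition $P = P_0 \,\dot\cup\, Q$ with $Q \in \mathcal{U}$, $|Q| = |P|$, and $P_0$ of the same cardinality as $P$ (possible since $P$ is infinite and $\mathcal{U}$ is non-principal, so $Q$ can be thinned while staying in $\mathcal{U}$), together with a bijection $\sigma: P \to Q$. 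Using $\mathcal{U}$ define $\varphi: \Pi \to R$ by: for $a = \sum_{r \in R} r e_{[a]_r} \in \Pi$ (the coefficient-level decomposition into disjoint pieces $[a]_r = \{x : a_x = r\}$), let $\varphi(a)$ be the unique $s \in R$ with $[a]_s \in \mathcal{U}$; here uniqueness and existence use that $\mathcal{U}$ is an ultrafilter and $|R|^+$-completeness guarantees that a partition of $P$ into $\le |R|$ pieces has exactly one piece in $\mathcal{U}$. As in the lemma, $\varphi$ is a unital $R$-algebra homomorphism (it is exactly the quotient $\Pi \to \Pi/\mathcal{U} \cong R$).

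Next I would define $\eta: \Pi \to \Pi$ by $\eta(a) = \varphi(a)e_{x_0} + \widehat{\sigma}(a)$, where $x_0 \in P_0$ is a fixed element and $\widehat{\sigma}(a) = \sum_{x \in P} a_x e_{\sigma(x)}$ is supported on $Q$; one should arrange $x_0 \notin Q$ so the two summands have disjoint support. Then $\eta$ is $R$-linear, unital (since $\varphi(e)=1$ and $\widehat\sigma(e) = e_Q$, giving $\eta(e) = e_{x_0} + e_Q = e$ provided $P = \{x_0\} \cup Q$; if $P_0$ is larger than a single point one instead uses a bijection onto $P \setminus Q$ for the "shift" part — I would set this up so that $\eta$ is genuinely unital, mirroring the $\omega$ case where $P_0 = \{0\}$). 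Checking $\eta$ is multiplicative is routine since $\varphi$ and $\widehat\sigma$ are, and the cross terms vanish by disjoint support. Non-surjectivity follows because $e_{x_0}$ has no preimage: if $\eta(a) = e_{x_0}$ then $\widehat\sigma(a) = 0$, forcing $a = 0$, whence $\varphi(a) = 0 \ne 1$.

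The heart of the argument is that $\eta$ is an $n$-local automorphism for every $n$. Given $a_1, \dots, a_n \in \Pi$, set $Y = \bigcap_{i=1}^n u(a_i)$ where $u(a_i) = [a_i]_{\varphi(a_i)} \in \mathcal{U}$; by $|R|^+$-completeness (in particular finite intersections) $Y \in \mathcal{U}$, so $Y$ is infinite, and each $a_i$ is \emph{constant} on $Y$ with value $\varphi(a_i)$. Now I would invoke the analogue of Lemma~\ref{lem:sigmagivesrho}: given the partition $P = (P\setminus Y) \,\dot\cup\, Y$ with $Y$ infinite and the injection $\sigma: P \to Q$ missing $x_0$, there is a permutation $\rho$ of $P$ with $\rho\upharpoonright_{P \setminus Y} = \sigma\upharpoonright_{P\setminus Y}$ and $\rho(Y) = \sigma(Y) \,\dot\cup\, \{x_0\}$ (one absorbs the single extra point $x_0$ into the image of the infinite set $Y$ by shifting along an enumeration of $Y$, exactly as in Lemma~\ref{lem:sigmagivesrho}). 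The same computation displayed in the proof of Lemma~\ref{lem:finiteRnonsurjective} then shows $\widehat\rho(a_i) = \varphi(a_i)e_{x_0} + \widehat\sigma(a_i) = \eta(a_i)$ for each $i$, since $a_i$ restricted to $Y$ equals $\varphi(a_i)e_Y$. Hence $\eta$ agrees with the automorphism $\widehat\rho \in \operatorname{Aut}(\Pi)$ on $a_1,\dots,a_n$.

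The main obstacle is bookkeeping the cardinality arithmetic so that all the pieces ($P_0$ versus $Q$, the "shift" bijection, the absorbed point $x_0$) fit together to make $\eta$ genuinely unital and the permutation $\rho$ genuinely a permutation of all of $P$ — in the $\omega$ case this is transparent because $\sigma$ is the successor map and $P_0 = \{0\}$, but for general $P$ one must choose the splitting $P = P_0 \,\dot\cup\, Q$ carefully (e.g. taking $|P_0| = 1$ by first passing to $Q' \in \mathcal{U}$ with $|P \setminus Q'| = 1$, which is fine since $\mathcal{U}$ is non-principal). Everything else is a direct transcription of Lemma~\ref{lem:finiteRnonsurjective} and Lemma~\ref{lem:sigmagivesrho}, with "cofinite ultrafilter on $\omega$" replaced by "$|R|^+$-complete non-principal ultrafilter on $P$" and "$Y$ cofinite/infinite" replaced by "$Y \in \mathcal{U}$".
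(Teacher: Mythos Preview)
Your proposal is correct and follows essentially the same route as the paper: obtain an $|R|^{+}$-complete non-principal ultrafilter on $P$, use it to define the $R$-algebra homomorphism $\varphi\colon\Pi\to R$, set $\eta(a)=\varphi(a)e_{x_0}+\widehat\sigma(a)$ for a shift $\sigma$, and for given $a_1,\dots,a_n$ intersect the sets $u(a_i)\in\mathcal U$ to find a common $Y$ on which each $a_i$ is constant, then build the permutation $\rho$ exactly as in Lemma~\ref{lem:sigmagivesrho}. Your care in taking $\sigma$ to be a genuine bijection $P\to P\setminus\{x_0\}$ (so that $\eta$ is unital and $\rho$ is a permutation of all of $P$) is in fact warranted---the paper's explicit ordinal-successor formula $x_\alpha\mapsto x_{\alpha+1}$ misses limit-ordinal coordinates when $|P|$ is uncountable, a glitch your setup avoids.
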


\begin{proof}
The case of finite $R$ is covered by Lemma \ref{lem:finiteRnonsurjective} and Corollary \ref{cor:finiteRnonsurjective}.
Thus let $R$ be infinite. As $|P| \ge \mu_R$, we may choose a nontrivial $|R|^+$-additive measure
$\mu: \mathcal{P}(P) \rightarrow \{0,1\}$, cf. Proposition \ref{prop:muRnontrivmeasure}.
Let $a\in\Pi$. Then there exist disjoint sets
$[a]_r\subseteq P$, $r\in R$, such that $a=\sum_{r\in R}re_{[a]_r}$.
Since $\mu$ is $|R|^+$-additive, there exists exactly one $s=:\varphi(a)\in R$
such that $\mu([a]_{s})=1$. It is easy to check that $\varphi:\Pi\rightarrow R$
is an $R$-algebra homomorphism. With respect to some enumeration
$\{x_\alpha: \alpha < |P|\}$ of $P$ define $\eta:\Pi\rightarrow\Pi$ by
\[
\eta(a)=\varphi (a)e_{x_0}+\sum_{\alpha\, <\, |P|}a_{x_\alpha}e_{x_{\alpha+1}}.
\]
for all $a=\sum_{x\in P}a_{x}e_{x} \in\Pi$.
As in Lemma \ref{lem:finiteRnonsurjective} and Corollary \ref{cor:finiteRnonsurjective},
we can show that $\eta$ has all the necessary properties. Note, in particular, that
\[
[a]_{\varphi(a)} \in \mathcal{U} = \{X\subseteq P: \mu(X)=1\},
\]
where $\mathcal{U}$ is a non-principal $|R|^+$-complete ultrafilter on $P$, cf. Remark \ref{rem:muRprop}\,(2).
\end{proof}

We mention the following obvious generalization of Corollary \ref{cor:finiteRnonsurjective}.

\begin{corollary} \label{cor:Rnonsurjective}
If $R$ is an indecomposable ring and $P$ a set with $|P| \ge \mu_R$, then there exists an injective
unital $R$-algebra endomorphism $\eta$ of $\Pi=\prod_{x\in P}Re_x$ such that $\eta\neq\widehat{\rho}$
for all injective maps $\rho:P\rightarrow P$.
\end{corollary}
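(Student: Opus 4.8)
The plan is to obtain this corollary directly from the non-surjective endomorphism already produced in Theorem~\ref{thm:maincartprod2}, together with an elementary coordinate computation of exactly the kind used in the proof of Corollary~\ref{cor:finiteRnonsurjective}; no new machinery is needed.

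First I would dispose of finite $R$: here $\mu_R=\aleph_0$, so the hypothesis $|P|\ge\mu_R$ merely says that $P$ is infinite, and the statement is precisely Corollary~\ref{cor:finiteRnonsurjective}. So assume $R$ is infinite. Fix the enumeration $\{x_\alpha:\alpha<|P|\}$ of $P$, the ultrafilter homomorphism $\varphi:\Pi\to R$, and the map $\eta$ from the proof of Theorem~\ref{thm:maincartprod2}, namely $\eta(a)=\varphi(a)e_{x_0}+\sum_{\alpha<|P|}a_{x_\alpha}e_{x_{\alpha+1}}$ for $a=\sum_{x\in P}a_xe_x$. By that theorem $\eta$ is an injective, unital $R$-algebra endomorphism of $\Pi$; the only thing left to check is that $\eta\ne\widehat\rho$ for every injective map $\rho:P\to P$.

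Let $\pi:\Pi\to Re_{x_0}\cong R$ be the projection onto the $e_{x_0}$-coordinate. Since the enumeration $\alpha\mapsto x_\alpha$ is a bijection and $\alpha+1\ne 0$, the index $x_0$ never occurs among the $x_{\alpha+1}$, so $\pi(\eta(a))=\varphi(a)$ for all $a\in\Pi$; as $\varphi$ maps $\Pi$ onto $R$ and $|R|\ge 2$, this gives $\pi\circ\eta\ne 0$. On the other hand $e_{x_0}\notin\operatorname{Im}(\eta)$: from $\eta(a)=e_{x_0}$ one reads off $a_{x_\alpha}=0$ for all $\alpha$, hence $a=0$ and $\eta(a)=0$. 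Now suppose $\eta=\widehat\rho$ for some injective $\rho$. If $x_0=\rho(z)$ for some $z\in P$, then $e_{x_0}=\widehat\rho(e_z)\in\operatorname{Im}(\widehat\rho)=\operatorname{Im}(\eta)$, contradicting $e_{x_0}\notin\operatorname{Im}(\eta)$. Otherwise $x_0\notin\operatorname{Im}(\rho)$, so the $e_{x_0}$-coordinate of $\widehat\rho(a)=\sum_{x\in P}a_xe_{\rho(x)}$ vanishes for every $a$, i.e. $\pi\circ\widehat\rho=0$, contradicting $\pi\circ\eta\ne 0$. Either way $\eta\ne\widehat\rho$, as claimed.

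There is essentially no obstacle in this proof: all the substantive work—the measure/ultrafilter construction and the verification that $\eta$ is $n$-local for every $n>0$—was already carried out in Theorem~\ref{thm:maincartprod2}, and the present statement merely records that the same $\eta$ cannot be realized by any injective reindexing. The one conceptual point worth isolating is that the $e_{x_0}$-coordinate of $\eta$ is the ultrafilter functional $\varphi$, which is genuinely spread over all of $P$; this is what simultaneously keeps $e_{x_0}$ out of $\operatorname{Im}(\eta)$ and $\pi\circ\eta$ nonzero, and no map $\widehat\rho$ can exhibit both behaviours.
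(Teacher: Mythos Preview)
Your proof is correct and follows essentially the same approach as the paper. The paper treats this corollary as an obvious consequence of Theorem~\ref{thm:maincartprod2} along the lines of Corollary~\ref{cor:finiteRnonsurjective}, and your argument spells out precisely that: the same $\eta$ works, $e_{x_0}\notin\operatorname{Im}(\eta)$ while $\pi\circ\eta\ne 0$, and your case split on whether $x_0\in\operatorname{Im}(\rho)$ makes explicit the step the paper leaves implicit.
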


Theorem \ref{thm:maincartprod} now follows from Lemma \ref{lem:maincartprod1}
and Theorem \ref{thm:maincartprod2}.

\subsection{$R$-algebra endomorphisms}

In this section we will revisit and generalize the idea that maps $\rho: P\rightarrow P$ induce
$R$-algebra endomorphisms $\widehat \rho \in \operatorname{End}(\Pi)$. Our reward will be a
characterization of $R$-algebra endomorphisms in terms of $\mu_R$, cf. Theorem
\ref{thm:mainendo}.

We start with a simple definition. As motivation, consider an arbitrary $R$-algebra
endomorphism $\eta\in \operatorname{End}(\Pi)$.
For each $x\in P$, the element $\eta(e_{x})$ is an idempotent, and there exists some
subset $A_{x}\subseteq P$ such that
\[
\eta(e_{x})=e_{A_{x}}.
\]
Note, that this includes the
possible options $A_{x}=\emptyset$ for $\eta(e_{x})=0$ and $A_{x}=P$ for $\eta(e_{x})=e$,
the identity element of $\Pi$. Furthermore, for $x,y \in P$ with $x\not= y$ we have
\begin{align} \label{eq:Axdef}
e_{A_x \cap A_y} = e_{A_x} e_{A_y} = \eta(e_{x})\eta(e_{y})=\eta(e_{x} e_{y}) = \eta(0)=0,
\end{align}
hence $A_x \cap A_y=\emptyset$. It follows that $\{A_{x}:x\in P\}$ is a
family of pairwise disjoint subsets of $P$.

\begin{definition}
Let $\eta\in \operatorname{End}(\Pi)$. We call $\eta$ \emph{induced}, if
there exists a family $\mathcal{F}=\{A_{x}:x\in P\}$ of pairwise disjoint
subsets of $P$ such that
\begin{align} \label{eq:inducedeq}
\eta(a)=\sum_{x\in P} a_{x}e_{A_{x}} \mbox{\quad for all }
a=\sum_{x\in P}a_{x}e_{x} \in \Pi.
\end{align}
In this case, we will call \emph{$\eta=\eta_{\mathcal{F}}$ induced by $\mathcal{F}$}.
\end{definition}

It is readily verified that Equation \eqref{eq:inducedeq} indeed defines an $R$-algebra
homomorphism for every family $\mathcal{F}=\{A_{x}:x\in P\}$ of pairwise disjoint subsets of $P$.
The induced homomorphism $\eta_\mathcal{F}$ is injective if and only if
$A_x \not= \emptyset$ for all $x\in P$. Note also, that for every injective map $\rho: P \rightarrow P$
the $R$-algebra homomorphism $\widehat \rho$ is induced by the family
\[
\mathcal{F}=\{A_{x}:x\in P\} \mbox{\quad with } A_x=\{\rho(x)\}.
\]
Finally, for every induced $R$-algebra homomorphism $\eta$ we can uniquely recover the inducing family
$\mathcal{F}=\{A_{x}:x\in P\}$ from Equation \eqref{eq:Axdef}.

The following propositions list some more results on induced homomorphisms.

\begin{proposition} \label{prop:inducedhomo1}
Let $\eta\in \operatorname{End}(\Pi)$ with $\eta(e_{x})=e_{A_{x}}$ for all $x \in P$.
If $P=\bigcup_{x\in P}A_{x}$, then $\eta$ is induced by $\mathcal{F}=\{A_{x}:x\in P\}$.
\end{proposition}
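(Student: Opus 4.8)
\textbf{Proof proposal for Proposition \ref{prop:inducedhomo1}.}
The plan is to show directly that the defining equation \eqref{eq:inducedeq} holds for $\eta$, i.e., that $\eta(a)=\sum_{x\in P}a_xe_{A_x}$ for every $a=\sum_{x\in P}a_xe_x\in\Pi$. Fix $a\in\Pi$. The natural first move is to locate the $e_z$-coordinate of $\eta(a)$ for an arbitrary $z\in P$ and compare it with the $e_z$-coordinate of $\sum_{x\in P}a_xe_{A_x}$. To access coordinates algebraically, I would multiply by the primitive idempotent $e_z$: since $\eta$ is an $R$-algebra endomorphism, $\eta(a)e_z$ is the $e_z$-component of $\eta(a)$, and similarly for the candidate image. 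Because the sets $\{A_x:x\in P\}$ are pairwise disjoint and, by hypothesis, cover $P$, each $z\in P$ lies in exactly one $A_{x(z)}$; hence $e_{A_x}e_z=e_z$ if $x=x(z)$ and $=0$ otherwise, so $\bigl(\sum_{x}a_xe_{A_x}\bigr)e_z=a_{x(z)}e_z$.

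The key computational step is then to evaluate $\eta(a)e_z$. Write $a=a_{x(z)}e_{x(z)}+a'$, where $a'=\sum_{x\ne x(z)}a_xe_x$. Applying $\eta$ and multiplying by $e_z$ gives $\eta(a)e_z=a_{x(z)}\eta(e_{x(z)})e_z+\eta(a')e_z$. The first summand is $a_{x(z)}e_{A_{x(z)}}e_z=a_{x(z)}e_z$ since $z\in A_{x(z)}$. For the second, observe that $a'e_{x(z)}=0$, so $\eta(a')\eta(e_{x(z)})=\eta(a'e_{x(z)})=0$, i.e., $\eta(a')e_{A_{x(z)}}=0$; multiplying this identity by $e_z$ (and using $z\in A_{x(z)}$, so $e_{A_{x(z)}}e_z=e_z$) yields $\eta(a')e_z=0$. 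Therefore $\eta(a)e_z=a_{x(z)}e_z$, which matches the $e_z$-coordinate of $\sum_{x}a_xe_{A_x}$ computed above. Since this holds for every $z\in P$, and the elements of $\Pi$ are determined by their coordinates, we conclude $\eta(a)=\sum_{x\in P}a_xe_{A_x}$, so $\eta=\eta_{\mathcal F}$ is induced by $\mathcal F=\{A_x:x\in P\}$.

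The one point that requires a little care — and which I expect to be the only genuine obstacle — is the passage from "$\eta(a)e_z=a_{x(z)}e_z$ for all $z$" to "$\eta(a)=\sum_x a_xe_{A_x}$" as a full equality in the cartesian product $\Pi$, rather than merely coordinatewise agreement. This is immediate once one notes that for any $b=\sum_{z\in P}b_ze_z\in\Pi$ the coordinate $b_z$ is recovered as the scalar with $be_z=b_ze_z$, so two elements of $\Pi$ agreeing after multiplication by every $e_z$ are equal; no infinite-sum convergence issue arises because $\Pi$ is the full product and each coordinate is treated separately. The hypothesis $P=\bigcup_{x\in P}A_x$ is used exactly once, to guarantee that every $z$ lies in some $A_{x(z)}$; disjointness (already established in the discussion preceding the definition of "induced") guarantees $x(z)$ is unique, so the above argument is unambiguous.
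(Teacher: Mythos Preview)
Your proof is correct and follows essentially the same approach as the paper: both exploit the multiplicativity of $\eta$ via $\eta(a)\eta(e_x)=\eta(ae_x)$ to pin down $\eta(a)$ on the block $A_x$. The paper's version is slightly more direct, computing $\eta(a)e_{A_x}=\eta(a)\eta(e_x)=\eta(ae_x)=\eta(a_xe_x)=a_xe_{A_x}$ in one line and then summing over $x$ using $e=\sum_{x\in P}e_{A_x}$, rather than splitting $a$ and arguing coordinate-by-coordinate.
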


\begin{proof}
Let $a=\sum_{x\in P}a_{x}e_{x} \in \Pi$. Then $e=\sum_{x\in P}e_{A_{x}}$ and
$\eta(a)=\sum_{x\in P}\eta(a)e_{A_{x}}$. Moreover,
\[
\eta(a) e_{A_x}=\eta(a)\eta(e_{x})=\eta(ae_{x})=\eta(a_xe_{x})=a_x\eta(e_{x})=a_{x}e_{A_{x}},
\]
and it follows that
$\eta(a)=\sum_{x\in P}a_{x}e_{A_{x}}$. This shows that $\eta$ is induced  by $\mathcal{F}$.
\end{proof}

More generally still, the following holds.

\begin{proposition} \label{prop:inducedhomo2}
Let $\eta\in \operatorname{End}(\Pi)$ with $\eta(e_{x})=e_{A_{x}}$ for all $x \in P$,
let $A=\bigcup_{x\in P}A_{x}$, $B = P\setminus A$ and $\Pi=\Pi_{A}\oplus\Pi_{B}$ with
$\Pi_A=\prod_{x\in A}Re_{x}$ and $\Pi_B=\prod_{x\in B}Re_{x}$.
Then for  $\eta=\eta_{A}\oplus\eta_{B}$ with $\eta_A, \eta_B \in \operatorname{End}(\Pi)$
the $\Pi_A$- and $\Pi_B$-component of $\eta$, respectively, the following holds.
\begin{itemize}
\item[(a)] $\eta_A$ is induced by $\mathcal{F}=\{A_{x}:x\in P\}$.
\item[(b)] $\eta_{B}(e_{x})=0$ for all $x\in P$.
\item[(c)] $\eta$ is induced if and only if $\eta_B=0$. In this case, $\eta =\eta_A = \eta_{\mathcal{F}}$.
\end{itemize}
\end{proposition}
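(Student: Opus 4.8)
The plan is to analyze the $R$-algebra decomposition $\Pi = \Pi_A \oplus \Pi_B$ and see how $\eta$ splits. First I would note that $\Pi_A$ and $\Pi_B$ are two-sided ideals of $\Pi$ (each is a product of primitive idempotent factors), so any $\eta \in \operatorname{End}(\Pi)$ is determined by its two components $\eta_A = \pi_A \circ \eta$ and $\eta_B = \pi_B \circ \eta$, where $\pi_A, \pi_B$ are the projections of $\Pi$ onto $\Pi_A$, $\Pi_B$. Since each $A_x \subseteq A$ by definition of $A$, we have $e_{A_x} \in \Pi_A$, hence $\eta(e_x) = e_{A_x}$ lies entirely in $\Pi_A$; this immediately gives $\eta_B(e_x) = 0$ for all $x \in P$, which is part (b). For part (a), observe that restricting $\eta_A$ to the sub-structure, we have $\eta_A(e_x) = e_{A_x}$ for all $x$, and by construction $A = \bigcup_{x\in P} A_x$, so Proposition \ref{prop:inducedhomo1} (applied with $\Pi_A$ in place of $\Pi$ and the family $\{A_x : x \in P\}$ of pairwise disjoint subsets of $A$ whose union is $A$) yields that $\eta_A$ is induced by $\mathcal{F} = \{A_x : x \in P\}$, i.e. $\eta_A(a) = \sum_{x\in P} a_x e_{A_x}$ for all $a = \sum_{x\in P} a_x e_x \in \Pi$.

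For part (c): if $\eta_B = 0$, then $\eta = \eta_A = \eta_{\mathcal{F}}$ by part (a), so $\eta$ is induced. Conversely, suppose $\eta$ is induced, say by a family $\mathcal{G} = \{C_x : x \in P\}$ of pairwise disjoint subsets of $P$, so that $\eta(a) = \sum_{x\in P} a_x e_{C_x}$ for all $a$. Taking $a = e_x$ recovers $e_{C_x} = \eta(e_x) = e_{A_x}$, whence $C_x = A_x$ for every $x$, so $\mathcal{G} = \mathcal{F}$ and $\eta = \eta_{\mathcal{F}}$. But then $\eta(a) = \sum_{x\in P} a_x e_{A_x} \in \Pi_A$ for every $a \in \Pi$, since each $A_x \subseteq A$; therefore $\eta_B = \pi_B \circ \eta = 0$. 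This also shows $\eta = \eta_A = \eta_{\mathcal{F}}$ in this case.

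I do not anticipate a serious obstacle here; the proposition is essentially a bookkeeping statement once one recognizes that $\eta$ respects the ideal decomposition $\Pi = \Pi_A \oplus \Pi_B$ and that the idempotents $\eta(e_x)$ are supported entirely in the $A$-part. The one point requiring a little care is the legitimacy of applying Proposition \ref{prop:inducedhomo1} to $\eta_A$: one must check that $\eta_A$, viewed as an endomorphism of $\Pi_A = \prod_{x\in A} Re_x$, indeed satisfies the hypothesis $A = \bigcup_{x\in P} A_x$ and sends $e_x \mapsto e_{A_x}$ — but this is immediate from $e_{A_x} \in \Pi_A$ and the definition of $A$. The uniqueness of the inducing family (established in the discussion preceding the proposition) is what makes the converse direction of (c) clean.
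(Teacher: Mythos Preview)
The paper states this proposition without proof, presenting it as the evident generalization of Proposition~\ref{prop:inducedhomo1} (note the lead-in ``More generally still, the following holds''). Your argument is correct and is precisely the intended one: part~(b) is immediate from $e_{A_x}\in\Pi_A$, part~(a) follows from the computation in the proof of Proposition~\ref{prop:inducedhomo1}, and part~(c) uses the uniqueness of the inducing family observed just before the definition.

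One small technical wrinkle worth tightening: Proposition~\ref{prop:inducedhomo1} is stated for an endomorphism of $\Pi$ with index set $P$ and hypothesis $P=\bigcup_{x}A_x$, whereas your $\eta_A$ is a homomorphism $\Pi\to\Pi_A\subseteq\Pi$, not an endomorphism of $\Pi_A$. So you cannot literally ``apply Proposition~\ref{prop:inducedhomo1} with $\Pi_A$ in place of $\Pi$''. What actually transfers is the \emph{proof}: since $\eta_A(a)\in\Pi_A$ and $e_A=\sum_{x\in P}e_{A_x}$, one has
\[
\eta_A(a)=\eta_A(a)\,e_A=\sum_{x\in P}\eta_A(a)\,e_{A_x}
=\sum_{x\in P}\eta_A(a)\,\eta_A(e_x)
=\sum_{x\in P}\eta_A(ae_x)
=\sum_{x\in P}a_x e_{A_x}.
\]
This is a cosmetic adjustment, not a gap; your plan is sound.
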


We can strengthen the result of Corollary \ref{cor:Rnonsurjective} as follows.

\begin{corollary} \label{cor:Rnonsurjective2}
If $R$ is an indecomposable ring and $P$ a set with $|P| \ge \mu_R$, then there exists an injective
unital $R$-algebra endomorphism $\eta$ of $\Pi=\prod_{x\in P}Re_x$ such that $\eta$ is not induced.
\end{corollary}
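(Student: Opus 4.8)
The plan is to reuse the endomorphism $\eta$ constructed in the proof of Theorem \ref{thm:maincartprod2} (equivalently, Corollary \ref{cor:Rnonsurjective}) and show that it fails to be induced. Recall that, with respect to an enumeration $\{x_\alpha : \alpha < |P|\}$ of $P$ and a nontrivial $|R|^+$-additive measure $\mu$ (with associated ultrafilter $\mathcal{U}$), this $\eta$ is the injective unital $R$-algebra endomorphism given by $\eta(a) = \varphi(a) e_{x_0} + \sum_{\alpha < |P|} a_{x_\alpha} e_{x_{\alpha+1}}$, where $\varphi(a)$ is the $\mathcal{U}$-limit of the coordinates of $a$. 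First I would compute $\eta(e_{x_\alpha})$ for each $\alpha$: since $\varphi(e_{x_\alpha}) = 0$ (the ultrafilter $\mathcal{U}$ is non-principal, so the cofinite complement of $\{x_\alpha\}$ lies in $\mathcal{U}$), we get $\eta(e_{x_\alpha}) = e_{x_{\alpha+1}}$, so the inducing sets would have to be $A_{x_\alpha} = \{x_{\alpha+1}\}$ for all $\alpha$.

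Next I would observe that $A := \bigcup_{\alpha} A_{x_\alpha} = \{x_{\alpha+1} : \alpha < |P|\} = P \setminus \{x_0\}$, and $B := P \setminus A = \{x_0\}$. Applying Proposition \ref{prop:inducedhomo2}, $\eta$ is induced if and only if the $\Pi_B$-component $\eta_B$ is zero, i.e., if and only if the $e_{x_0}$-coordinate of $\eta(a)$ vanishes for every $a \in \Pi$. But the $e_{x_0}$-coordinate of $\eta(a)$ is exactly $\varphi(a)$, and $\varphi$ is not the zero map — for instance $\varphi(e) = 1$ where $e$ is the identity of $\Pi$, or more simply $\varphi$ is unital and hence nonzero. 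Thus $\eta_B \neq 0$, and $\eta$ is not induced.

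The content here is entirely bookkeeping: there is no real obstacle beyond correctly identifying the family $\{A_x : x\in P\}$ forced by $\eta(e_x)$ and then invoking Proposition \ref{prop:inducedhomo2}(c). The only point requiring a moment's care is that the indexing set $P$ for the family $\mathcal{F} = \{A_x : x\in P\}$ ranges over all of $P$, while the sets $A_{x_\alpha}$ we computed are indexed by the successor ordinals' preimages; since the enumeration $\{x_\alpha : \alpha < |P|\}$ exhausts $P$, every $x \in P$ equals some $x_\alpha$, so $A_x = \{x_{\alpha+1}\}$ is well-defined for all $x \in P$, and no element of $P$ is left without an assigned $A_x$. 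With that checked, the conclusion is immediate, and the corollary strengthens Corollary \ref{cor:Rnonsurjective} because ``not induced'' is a stronger conclusion than ``$\eta \neq \widehat\rho$ for all injective $\rho$'': every $\widehat\rho$ with $\rho$ injective is induced (by $A_x = \{\rho(x)\}$), so a non-induced endomorphism is automatically distinct from all such $\widehat\rho$.
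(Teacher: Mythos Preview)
Your approach is correct and matches the paper's proof almost verbatim: you reuse the $\eta$ from Theorem~\ref{thm:maincartprod2}, compute $A_{x_\alpha}=\{x_{\alpha+1}\}$, note that $x_0\notin\bigcup_x A_x$, and observe that the $e_{x_0}$-component of $\eta$ is $\varphi\neq 0$, so $\eta$ is not induced via Proposition~\ref{prop:inducedhomo2}(c).

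One small inaccuracy worth fixing: your claim that $B=P\setminus A=\{x_0\}$ is only true when $|P|=\omega$. For $|P|>\omega$ the set $\{x_{\alpha+1}:\alpha<|P|\}$ omits not just $x_0$ but also every $x_\lambda$ with $\lambda$ a nonzero limit ordinal, so $B$ is larger. This does not affect your argument at all, since you only need $x_0\in B$ and $\eta_B\neq 0$ at the $e_{x_0}$-coordinate; the paper sidesteps this by simply asserting $x_0\notin\bigcup_x A_x$ without computing $B$ explicitly.
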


\begin{proof}
Let $\{x_\alpha: \alpha < |P|\}$ be some enumeration of $P$.
Let the map $\varphi:\Pi\rightarrow R$ be
defined via a non-principal $|R|^+$-complete ultrafilter on $P$ as in Theorem \ref{thm:maincartprod2},
and define
\[
\eta(a)=\varphi (a)e_{x_0}+\sum_{\alpha\, <\, |P|}a_{x_\alpha}e_{x_{\alpha+1}}
\]
for all $a=\sum_{x\in P}a_{x}e_{x} \in\Pi$.
Then $\eta$ is an injective unital $R$-algebra endomorphism of $\Pi$.
Moreover, $\eta(e_{x_\alpha})= e_{x_{\alpha+1}}$ and $A_{x_\alpha} = \{x_{\alpha+1}\}$ for all $\alpha < |P|$.
As $x_0 \notin \bigcup_{x \in P} A_x$, but $\eta$ followed by the natural projection onto the
$e_{x_0}$-coordinate is non-zero, $\eta$ is not induced.
\end{proof}

The central result of this section will be the following adaption of Theorem \ref{thm:maincartprod}
to the situation of $R$-algebra endomorphisms of $\Pi$. We will require $R$ to be a field.

\begin{theorem} \label{thm:mainendo}
For any field $R$ and $\Pi=\prod_{x\in P}Re_{x}$ the following are equivalent:
\begin{itemize}
\item[(1)] Every $\eta \in \operatorname{End}(\Pi)$ is induced.
\item[(2)] $|P| < \mu_R$.
\end{itemize}
\end{theorem}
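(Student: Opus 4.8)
\textbf{Proof proposal for Theorem \ref{thm:mainendo}.}

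The plan is to establish the two implications separately, borrowing the structural machinery of the preceding subsection. The direction (2)$\Rightarrow$(1) is the substantive one; the direction (1)$\Rightarrow$(2) is essentially Corollary \ref{cor:Rnonsurjective2} contrapositive: if $|P| \ge \mu_R$ then that corollary already produces an injective unital $\eta \in \operatorname{End}(\Pi)$ that is not induced, so not every endomorphism is induced. Hence the work lies entirely in showing that $|P| < \mu_R$ forces every $\eta \in \operatorname{End}(\Pi)$ to be induced.

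For (2)$\Rightarrow$(1), fix $\eta \in \operatorname{End}(\Pi)$ and write $\eta(e_x) = e_{A_x}$ with the $A_x$ pairwise disjoint, $A = \bigcup_{x\in P} A_x$, $B = P\setminus A$. By Proposition \ref{prop:inducedhomo2} it suffices to show that the $\Pi_B$-component $\eta_B$ vanishes; equivalently, that $\pi_y \circ \eta = 0$ for every $y \in B$, where $\pi_y$ is the projection onto the $e_y$-coordinate. So fix $y \in B$ and set $\gamma = \pi_y \circ \eta : \Pi \to Re_y \cong R$, an $R$-linear (indeed, since $R$ is a field, $R$-algebra) map with $\gamma(e_x) = \pi_y(e_{A_x}) = 0$ for all $x$ (because $y \notin A_x$). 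The goal is to derive $\gamma = 0$ from $|P| < \mu_R$. The idea is that $\gamma$, being a nonzero $R$-algebra homomorphism $\Pi \to R$ killing every $e_x$, would give rise to a nontrivial $|R|^+$-additive $0,1$-valued measure on $P$ — contradicting $|P| < \mu_R$ via Proposition \ref{prop:muRnontrivmeasure}. Concretely: for $X \subseteq P$, $e_X$ is idempotent in $\Pi$, so $\gamma(e_X)$ is idempotent in the field $R$, hence $\gamma(e_X) \in \{0,1\}$; define $\mu(X) = \gamma(e_X)$. Multiplicativity of $\gamma$ gives $\mu(X \cap Y) = \mu(X)\mu(Y)$, additivity on complements gives $\mu(X) + \mu(P\setminus X) = \mu(e) = 1$ (here one needs $\eta$, hence $\gamma$, unital — if $\eta$ is not unital one restricts to the unital component $\Pi_A \oplus \Pi_B$ appropriately, but $\eta(e) = e$ when $\eta \in \operatorname{End}(\Pi)$ is assumed unital; in any case $\gamma(e) = 1$ since $\gamma \ne 0$ and $\gamma(e)$ is an idempotent above the image). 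The $|R|^+$-additivity is the step that uses the field hypothesis together with a genericity argument: given a partition $P = \bigcup_{\alpha < \kappa} X_\alpha$ with $\kappa \le |R|$ and all $\mu(X_\alpha) = 0$, pick distinct nonzero scalars $a_\alpha \in R$ (possible as $\kappa \le |R| < |R|^+$ and $R$ is infinite whenever $\mu_R$ is in play, or $R$ finite forces $\mu_R = \aleph_0 > |P|$ reducing to the trivial finite case), form $a = \sum_\alpha a_\alpha e_{X_\alpha}$, and compute $\gamma(a)$; linearity plus $\gamma(e_{X_\alpha}) = \mu(X_\alpha) = 0$ would give $\gamma(a) = 0$, but a separate argument — mimicking the contradiction in Lemma \ref{lem:gammaspecialprop}, using that $\gamma(a)$ must lie in $\{a_\alpha : \alpha < \kappa\} \cup \{0\}$ because... — here one must be careful, since for general $\eta \in \operatorname{End}(\Pi)$ we do \emph{not} a priori know $\gamma(a) \in \{a_x : x \in P\}$. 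This is exactly the gap to bridge.

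The main obstacle, then, is precisely that Lemma \ref{lem:gammaspecialprop} was stated for $\gamma$ satisfying the "selector" property $\gamma(a) \in \{a_x : x \in P\}$, which came for free for local automorphisms but is \emph{not} automatic for an arbitrary algebra endomorphism. So the real content is to prove directly that $\gamma = \pi_y \circ \eta$ (a nonzero $R$-algebra homomorphism $\Pi \to R$ annihilating all $e_x$) cannot exist when $|P| < \mu_R$. I would handle this by working with the ultrafilter/measure $\mu(X) = \gamma(e_X)$ as above and proving $\mu$ is $|R|^+$-complete \emph{using only that $\gamma$ is a unital $R$-algebra homomorphism}: given a partition into $\le |R|$ many $\mu$-null pieces $X_\alpha$, the element $b_\alpha := e_{X_\alpha}$ satisfies $\gamma(b_\alpha) = 0$; the subtlety is passing to an infinite sum. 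Since $R$ is a field, the quotient $\Pi / \ker\gamma \cong R$, and $\ker\gamma$ is a maximal ideal containing every $e_x$, hence containing $\bigoplus_{x\in P} Re_x$; the question is whether such a maximal ideal can fail to be "fixed" by an $|R|^+$-complete ultrafilter. This is a standard fact: maximal ideals of $\prod_{x\in P} R$ lying over the ideal of "finitely supported" elements correspond to ultrafilters on $P$, and the residue field being $R$ (not a proper extension) forces the ultrafilter to be $|R|^+$-complete — which is impossible for $|P| < \mu_R$. I would cite or reproduce this correspondence, thereby getting the contradiction and concluding $\eta_B = 0$, so $\eta = \eta_A = \eta_{\mathcal F}$ is induced. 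The field hypothesis enters decisively in two places: idempotents of $R$ being only $0,1$ (giving $\mu$ its $0,1$ values — though indecomposability alone suffices here) and, crucially, $\Pi/\ker\gamma \cong R$ being forced for a surjective homomorphism onto a field, which is what pins down the completeness of the ultrafilter.
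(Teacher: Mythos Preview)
Your outline is correct, and the direction (1)$\Rightarrow$(2) via Corollary \ref{cor:Rnonsurjective2} matches the paper exactly. For (2)$\Rightarrow$(1) your route is valid but genuinely different from the paper's. You correctly isolate the obstacle --- the ``selector property'' $\gamma(a)\in\{a_x:x\in P\}$ is not a priori available for an arbitrary $R$-algebra endomorphism --- and you propose to bypass it via the maximal-ideal/ultrafilter correspondence for $\prod_{x\in P}R$: the kernel of $\gamma$ is a maximal ideal, hence $M_{\mathcal U}$ for a non-principal ultrafilter $\mathcal U$, and the residue field being exactly $R$ (via the diagonal embedding, since $\gamma$ is $R$-linear) forces $\mathcal U$ to be $|R|^+$-complete, contradicting $|P|<\mu_R$. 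That argument is sound once the correspondence and the completeness criterion are spelled out.

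The paper, by contrast, does \emph{not} circumvent the selector property --- it proves it in two lines, and this is the idea you are missing. Given $a\in\Pi$, set $b=a-\gamma(a)e$. Then $\gamma(b)=\gamma(a)-\gamma(a)\gamma(e)=0$; if $b$ were a unit of $\Pi$ one would get $1=\gamma(e)=\gamma(b)\gamma(b^{-1})=0$, so $b$ is not a unit. Since $R$ is a field, a non-unit of $\Pi$ has a zero coordinate, hence $a_x=\gamma(a)$ for some $x\in P$. This establishes \eqref{eq:gammaspecialprop} directly and feeds straight into Lemma \ref{lem:gammaspecialprop}. Your approach buys a cleaner conceptual story (residue-field size controlling ultrafilter completeness is a recognizable principle); the paper's approach buys brevity, stays entirely within the machinery already developed, and makes the use of the field hypothesis completely transparent --- it enters only to say that non-units of $\Pi$ have zero coordinates.
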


\begin{proof}
Based on Corollary \ref{cor:Rnonsurjective2} we just need to check that (2) implies (1).
Let us therefore assume that there exists some $\eta \in \operatorname{End}(\Pi)$ which
is not induced.

Decompose $\eta=\eta_{A}\oplus\eta_{B}$ as in Proposition \ref{prop:inducedhomo2}.
As $\eta$ is not induced, we have $\eta_B \not= 0$. In particular,
we may pick some $y \in B = P\setminus A$ such that $\gamma:=\pi\circ \eta \not= 0$,
where $\pi:\Pi\rightarrow Re_y\cong R$ is the natural projection onto the $e_y$-coordinate.
Note that $\gamma$ is an $R$-algebra homomorphism.

With Proposition \ref{prop:inducedhomo2}(b),
\begin{align} \label{eq:gammaspecialprop3}
\gamma(e_x)=\pi(\eta(e_x)) =\pi(\eta_B(e_x))=0 \mbox{\quad for all } x \in P.
\end{align}
We claim
\begin{align} \label{eq:gammaspecialprop4}
\gamma(e)=1.
\end{align}
Assume $\gamma(e)\not= 1$. In this case, we have $\gamma(e)=0$, the only other idempotent in $R$.
Hence,
\[
\gamma(a)=\gamma(ea)=\gamma(e)\gamma(a)=0
\]
for all $a\in \Pi$, a contradiction to our choice of $y$.

We next claim
\begin{align} \label{eq:gammaspecialprop5}
\gamma(a)\in\{a_{x}:x\in P\} \mbox{\quad for all } a =\sum_{x\in P} a_{x}e_{x}\in \Pi.
\end{align}
To see this, consider the element $b=a-\gamma(a)e$. We have
\[
\gamma(b)=\gamma(a-\gamma(a)e)=\gamma(a)-\gamma(\gamma(a)e) = \gamma(a)-\gamma(a)\gamma(e) = \gamma(a)-\gamma(a) = 0.
\]
If $b \in \Pi$ were invertible, this would yield
\[
1= \gamma(e)=\gamma(b)\gamma(b^{-1}) = 0,
\]
a contradiction. Thus $b$ must not be invertible. Hence, as $R$ is a field, one coordinate of $b=a-\gamma(a)e$ must be $0$,
and thus $\gamma(a)\in\{a_{x}:x\in P\}$.

With \eqref{eq:gammaspecialprop3} and \eqref{eq:gammaspecialprop5}, the $R$-linear map
$\gamma$ has Property \eqref{eq:gammaspecialprop}, and Lemma \ref{lem:gammaspecialprop} yields $|P| \ge \mu_R$.
\end{proof}

\section{Local automorphisms of finitary incidence algebras} \label{sec:fips}

In the following, we will investigate the local automorphisms of finitary incidence algebras.
Let us first fix some basic notation for the remainder of this section.

\begin{notation}
Let $(P,\le)$ be a poset, and let $R$ be a commutative, indecomposable ring.
Then $FI(P)$ will denote the induced finitary incidence algebra. We will associate
any element $a \in FI(P)$ with its standard representation $a=\sum_{x \le y} a_{xy}e_{xy}$.
For convenience we may write $e_x$ instead of $e_{xx}$, and $a_x$ instead of $a_{xx}$.
\end{notation}

We will also need the following definition.

\begin{definition}
Let
\[
Z=Z(FI(P)) = \{a: a_x =0 \mbox{ for all } x\in P \}\subseteq FI(P).
\]
\end{definition}

We gather some of the basic properties of $Z(FI(P))$ for later use.

\begin{proposition} For $Z=Z(FI(P))$ the following holds. \label{prop:propZ}
\begin{itemize}
\item[(a)] $Z\vartriangleleft FI(P)$ is a two-sided ideal.
\item[(b)] $FI(P)/Z \cong \prod_{x\in P} Re_x$.
\item[(c)] $\varphi(Z) = Z$ for all $\varphi \in \operatorname{Aut}(FI(P))$.
\end{itemize}
\end{proposition}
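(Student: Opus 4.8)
The three parts of Proposition \ref{prop:propZ} are essentially bookkeeping around the product decomposition $FI(P)/Z \cong \prod_{x\in P} Re_x$, so I would prove (a) and (b) directly and deduce (c) from Theorem \ref{thm:charaut}. For part (a), I would take $a = \sum_{x\le y} a_{xy}e_{xy}$ and $b = \sum_{x\le y} b_{xy}e_{xy}$ in $FI(P)$ with, say, $a\in Z$, and examine the diagonal coefficient $(ab)_{zz}$. Using the multiplication rule $e_{xu}e_{vy} = e_{xy}$ iff $x\le u=v\le y$ (and $0$ otherwise), the only contribution to the $e_{zz}$-coefficient of $ab$ comes from the product $e_{zz}e_{zz}$, so $(ab)_{zz} = a_{zz}b_{zz} = 0$ since $a_{zz}=0$; symmetrically $(ba)_{zz}=0$. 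Hence $Z$ is a two-sided ideal (it is clearly an $R$-submodule, being defined by the vanishing of certain coordinates). One should also note in passing that $Z$ is exactly the set of formal sums supported off the diagonal, which makes the finitary condition automatically inherited.

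For part (b), I would define the map $\pi: FI(P) \to \prod_{x\in P} Re_x$ by $\pi(a) = \sum_{x\in P} a_{xx}e_x$, i.e.\ projection onto the diagonal. This is visibly $R$-linear and surjective (any diagonal element of $FI(P)$ already lies in $FI(P)$ and maps to itself, and every element of $\prod_{x\in P}Re_x$ arises this way). It is multiplicative by the same coefficient computation as above: $(ab)_{zz} = a_{zz}b_{zz}$ for all $z$, so $\pi(ab) = \pi(a)\pi(b)$; and $\pi$ sends the identity $\sum_x e_{xx}$ of $FI(P)$ to the identity of the product, so $\pi$ is a unital $R$-algebra homomorphism. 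Its kernel is precisely $\{a : a_{xx}=0 \text{ for all }x\} = Z$, so the first isomorphism theorem gives $FI(P)/Z \cong \prod_{x\in P} Re_x$.

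For part (c), the cleanest route is to invoke Theorem \ref{thm:charaut}: any $\varphi\in\operatorname{Aut}(FI(P))$ decomposes as $\varphi = \psi_f\circ M_\sigma\circ\widehat\rho$, and each of the three factors visibly preserves $Z$ --- $M_\sigma$ and $\widehat\rho$ act on each $e_{xy}$ by scaling or by relabeling $(x,y)\mapsto(\rho(x),\rho(y))$, which in particular sends diagonal indices to diagonal indices and off-diagonal to off-diagonal, so the diagonal coefficients of $M_\sigma(a)$ and $\widehat\rho(a)$ vanish whenever those of $a$ do; and for the inner automorphism $\psi_f$, since $Z$ is a two-sided ideal by (a), conjugation $a\mapsto faf^{-1}$ maps $Z$ into $Z$. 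Composing, $\varphi(Z)\subseteq Z$, and applying the same to $\varphi^{-1}$ gives equality. Alternatively, one can argue more intrinsically: $Z$ is the Jacobson-type ideal consisting of all $a$ whose image in $FI(P)/Z\cong\prod Re_x$ is zero, and since any automorphism of $FI(P)$ must permute the primitive idempotents of the quotient (as in Proposition \ref{prop:inducedauto}), it respects this canonical quotient and hence its kernel. I expect no real obstacle here; the only mild care needed is to make sure the diagonal-coefficient computation for products is stated cleanly, since that single fact drives all three parts.
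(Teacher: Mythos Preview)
Your proposal is correct and follows essentially the same approach as the paper: parts (a) and (b) are dispatched by the diagonal-coefficient computation $(ab)_{zz}=a_{zz}b_{zz}$ (the paper simply calls them ``easy to check''), and part (c) is obtained exactly as you do by invoking Theorem~\ref{thm:charaut}, checking that each factor $\psi_f$, $M_\sigma$, $\widehat\rho$ sends $Z$ into $Z$, and then applying the same to $\varphi^{-1}$ for the reverse inclusion. The only cosmetic discrepancy is that the paper's convention is $\psi_f(a)=f^{-1}af$ rather than $faf^{-1}$, but since $Z$ is a two-sided ideal this is immaterial.
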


\begin{proof}
Parts (a) and (b) are easy to check. For (c), we just need to check $\varphi(Z) \subseteq Z$,
as $Z \subseteq \varphi(Z)$ follows from $\varphi^{-1}(Z) \subseteq Z$. Note, however, that
$\varphi = \psi_f \circ M_\sigma \circ \widehat \rho$ with Theorem \ref{thm:charaut}, and that
$\psi_f(Z) \subseteq Z$, $M_\sigma(Z) \subseteq Z$ and $\widehat \rho(Z) \subseteq Z$
is immediate.
\end{proof}

Our main result will be the following generalization of Theorem \ref{thm:maincartprod}.

\begin{theorem} \label{thm:mainlautoffip}
Let $(P,\le)$ be a poset and $R$ an indecomposable ring.
\begin{itemize}
\item[(a)] For $|P| < \mu_R$, we have $\operatorname{LAut}(FI(P)) = \operatorname{Aut}(FI(P))$.
\item[(b)] For $|P| \ge \mu_R$, non-surjective local automorphisms may exist.
\end{itemize}
\end{theorem}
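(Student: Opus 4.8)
The plan is to reduce the statement for $FI(P)$ to the already-established result for the cartesian product $\Pi = \prod_{x \in P} Re_x$, Theorem \ref{thm:maincartprod}, using the canonical ideal $Z = Z(FI(P))$ and the quotient $FI(P)/Z \cong \Pi$ from Proposition \ref{prop:propZ}. For part (b), the construction is immediate: by Theorem \ref{thm:maincartprod2} there is a non-surjective $n$-local automorphism $\bar\eta$ of $\Pi$ when $|P| \ge \mu_R$; pulling this back through the projection $FI(P) \twoheadrightarrow FI(P)/Z \cong \Pi$ and combining it with the identity on $Z$ (i.e.\ choosing an $R$-module splitting $FI(P) = Z \oplus \Pi'$ with $\Pi' \cong \Pi$ and setting $\eta = \mathrm{id}_Z \oplus \bar\eta$) should yield a non-surjective $R$-linear map that is still a local automorphism of $FI(P)$, because each local-automorphism witness for $\bar\eta$ on $\Pi$ lifts via $\widehat\rho$ on $FI(P)$. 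One must check that the witnessing $\widehat\rho$ agrees with $\eta$ on $Z$ as well, not just modulo $Z$; choosing $\bar\eta$ from the ultrafilter construction, which is literally $\widehat{(\text{shift})}$ plus a correction landing in $Re_{x_0}$, makes this transparent.

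For part (a), the harder direction, I would take $\eta \in \operatorname{LAut}(FI(P))$ with $|P| < \mu_R$ and argue in stages. First, $\eta$ preserves $Z$: since $\eta$ is a local automorphism, for each $a \in Z$ there is $\varphi_a \in \operatorname{Aut}(FI(P))$ with $\eta(a) = \varphi_a(a) \in \varphi_a(Z) = Z$ by Proposition \ref{prop:propZ}(c), so $\eta(Z) \subseteq Z$. Hence $\eta$ descends to an $R$-linear map $\bar\eta$ on $FI(P)/Z \cong \Pi$, and $\bar\eta$ is a local automorphism of $\Pi$ (each witness $\varphi_a$ descends to an automorphism of $\Pi$ by the canonical decomposition of Theorem \ref{thm:charaut}, whose $\widehat\rho$-part survives the quotient). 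Since $|P| < \mu_R$, Theorem \ref{thm:maincartprod} gives $\bar\eta \in \operatorname{Aut}(\Pi)$, so $\bar\eta = \widehat\rho$ for a genuine permutation $\rho$ of $P$ by Proposition \ref{prop:inducedauto}. After composing $\eta$ with the $FI(P)$-automorphism $\widehat{\rho^{-1}}$ we may assume $\eta$ induces the identity on $FI(P)/Z$; equivalently $\eta$ fixes each primitive idempotent $e_x$ modulo $Z$. The remaining task is to show such an $\eta$ lies in $\operatorname{Inn}(FI(P))\operatorname{Mult}(FI(P))$, in particular is an automorphism.

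The core of part (a) is therefore the ``$Z$-level'' analysis: one must understand a local automorphism $\eta$ that is the identity modulo $Z$. I would first pin down the action on the idempotents $e_x = e_{xx}$ themselves: $\eta(e_x)$ is an idempotent congruent to $e_x$ mod $Z$, and using the local-automorphism property with the pair $e_x$ (a witness must send $e_x$ to a conjugate-times-Schur image of $e_x$, which for a primitive idempotent forces $\eta(e_x) = f_x e_x f_x^{-1}$ for a unit $f_x$), one should be able to conjugate by a suitable $f \in FI(P)$ to arrange $\eta(e_x) = e_x$ for all $x$. Then $\eta$ preserves each ``Peirce corner'' $e_x FI(P) e_y$, which for $x \le y$ is the rank-one module $Re_{xy}$, so $\eta(e_{xy}) = \sigma_{xy} e_{xy}$ for scalars $\sigma_{xy} \in R$; testing $\eta$ against elements $e_{xy} + e_{yz}$ and against $e_{xz}$ (whose images under any automorphism must be compatible) forces $\sigma_{xy}$ to be a unit and the cocycle condition $\sigma_{xy}\sigma_{yz} = \sigma_{xz}$, exhibiting $\eta$ as a Schur multiplication $M_\sigma \in \operatorname{Mult}(FI(P)) \subseteq \operatorname{Aut}(FI(P))$. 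The main obstacle I anticipate is the bookkeeping needed to promote ``$\eta$ acts correctly on each individual basis element or short chain'' to ``$\eta$ acts correctly on every (infinite formal) element of $FI(P)$'': one needs a decomposition-into-finitely-supported-pieces argument analogous to the $b+c$ trick in Theorem \ref{thm:surjectivelautisaut}, exploiting the finitary condition in the definition of $FI(P)$ and the $R$-linearity of $\eta$, and care is required because infinitely many corners are involved at once. I would handle this by fixing a pair $x < y$, noting that any $a \in FI(P)$ only involves finitely many intervals inside $[x,y]$, and choosing local-automorphism witnesses separately on each such finite slice, then reconciling them.
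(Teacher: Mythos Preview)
Your outline for (a) matches the paper's four-step strategy, but skips the hard content at each stage. First, you jump from ``$\bar\eta=\widehat\rho$ for a permutation $\rho$'' to composing with ``the $FI(P)$-automorphism $\widehat{\rho^{-1}}$'' without proving $\rho$ is an \emph{order} automorphism; the paper needs Lemmas~\ref{lem:inducedrho1}--\ref{lem:inducedrho2} for this, and the $|R|=2$ case is delicate. Second, and most seriously, you wave away the inner-automorphism step: knowing $\eta(e_x)=f_xe_xf_x^{-1}$ for individual $f_x$ does not give a single conjugator. The natural candidate $\beta=\sum_x e_x\eta(e_x)$ lies a priori only in $I(P)$, and proving $\beta\in FI(P)$ is the technical core of the paper (Lemma~\ref{lem:beta2}); it rests on orthogonality relations such as $\eta(e_x)\eta(e_y)=0$ and $\eta(e_x)\eta(e_Y)=\eta(e_x)$ for suitable infinite $Y$ (Lemmas~\ref{lem:idempow3}--\ref{lem:idempow5}, Theorem~\ref{thm:idempow6}), none of which is automatic precisely because $\eta$ is not multiplicative. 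Third, your Peirce-corner claim ``$\eta$ preserves $e_xFI(P)e_y$'' fails for the same reason; the paper instead derives $\eta(e_{xy})\in Re_{xy}$ from the idempotency of $e_x+e_{xy}$ and $e_y+e_{xy}$ (Theorem~\ref{thm:step3}), and obtains the cocycle condition from the idempotent $e_y+e_{xy}+e_{yz}+e_{xz}$, not from $e_{xy}+e_{yz}$. The finish also requires more than finite slicing: the paper first shows $\eta$ fixes all diagonal elements (Lemma~\ref{lem:step4a}) and preserves the ideals $L_{uv}$ (Lemma~\ref{lem:step4b}) before reading off each coordinate.

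For (b), your lifting has a real obstruction: the permutation $\rho$ witnessing $\bar\eta(a)$ on $\Pi$ must be an order automorphism of $(P,\le)$ for $\widehat\rho$ to lie in $\operatorname{Aut}(FI(P))$, and the shift-type permutations from the ultrafilter construction generally are not. The paper sidesteps this entirely: since the claim is only that such $\eta$ \emph{may} exist, it simply takes $(P,\le)$ to be an antichain, so that $FI(P)=\Pi$ and Theorem~\ref{thm:maincartprod2} applies verbatim.
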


Basically the same arguments apply to provide a corresponding generalization of Theorem \ref{thm:surjectivelautisaut}.

\begin{theorem}  \label{thm:mainlautoffip2}
Let $(P,\le)$ be a poset, $R$ an indecomposable ring with $|R|\ge 3$, and $\eta$
a local automorphism of $FI(P)$ such that
\[
e_{x}\in\: \operatorname{Im}(\eta)+Z(FI(P)) \mbox{\quad for all } x\in P.
\]
Then $\eta$ is an $R$-algebra automorphism.
\end{theorem}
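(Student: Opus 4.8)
The plan is to reduce the statement for $FI(P)$ to the already-established results on the cartesian product $\Pi=\prod_{x\in P}Re_x$ via the canonical quotient $FI(P)/Z\cong\Pi$ of Proposition \ref{prop:propZ}. The key observation is that a local automorphism $\eta$ of $FI(P)$ must respect the ideal $Z$: since every $\varphi\in\operatorname{Aut}(FI(P))$ satisfies $\varphi(Z)=Z$ by Proposition \ref{prop:propZ}(c), for any $a\in Z$ we have $\eta(a)=\varphi(a)\in Z$ for a suitable $\varphi$, so $\eta(Z)\subseteq Z$. Hence $\eta$ induces a well-defined $R$-linear map $\bar\eta:\Pi\to\Pi$ on the quotient. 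One then checks that $\bar\eta$ is a local automorphism of $\Pi$: given $\bar a\in\Pi$, lift it to $a\in FI(P)$, pick $\varphi\in\operatorname{Aut}(FI(P))$ with $\eta(a)=\varphi(a)$, and note $\varphi$ descends to $\bar\varphi\in\operatorname{Aut}(\Pi)$ (again using $\varphi(Z)=Z$) with $\bar\eta(\bar a)=\bar\varphi(\bar a)$; every element of $\operatorname{Aut}(\Pi)$ arises this way by Proposition \ref{prop:inducedauto} and Theorem \ref{thm:charaut}.

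Next I would translate the hypothesis $e_x\in\operatorname{Im}(\eta)+Z(FI(P))$ into the statement that $\bar e_x\in\operatorname{Im}(\bar\eta)$ for all $x\in P$, where $\bar e_x$ is the image of $e_{xx}$ in $\Pi$; this is exactly the surjectivity-type hypothesis of Theorem \ref{thm:surjectivelautisaut}. Applying that theorem (valid since $|R|\ge 3$), we conclude $\bar\eta=\widehat\rho$ for some permutation $\rho$ of $P$. Composing $\eta$ with $\widehat{\rho^{-1}}\in\operatorname{Aut}(FI(P))$ (lifting $\rho^{-1}$ to an order-related automorphism — here one needs a little care, see below), we may assume $\bar\eta=\operatorname{id}_\Pi$, i.e.\ $\eta$ induces the identity on $FI(P)/Z$. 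The task then becomes: a local automorphism $\eta$ of $FI(P)$ that is the identity modulo $Z$ must already be an $R$-algebra automorphism. The target here is the subgroup $\operatorname{Inn}(FI(P))\operatorname{Mult}(FI(P))$ of Theorem \ref{thm:charaut}, which is precisely the set of automorphisms inducing the identity on $FI(P)/Z$.

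To finish, I would work level by level on $Z$. Writing $Z = \bigcup_n Z^n$-type filtration (or filtering by the "length" of a covering chain between $x$ and $y$), I would show that for $x<y$ a cover, $\eta(e_{xy}) = \sigma_{xy}e_{xy}$ for some unit $\sigma_{xy}\in R$: testing $\eta$ on $e_{xy}$, on $e_{xx}+e_{xy}$, and on $e_{xx}$ simultaneously against automorphisms $\varphi=\psi_f\circ M_\sigma\circ\widehat\rho$ that fix $Z/Z^2$-classes forces $\rho=\operatorname{id}$, forces $\psi_f$ to act trivially on the relevant block, and pins down the Schur scalar. Iterating up the filtration and using the cocycle relation $\sigma_{xy}\sigma_{yz}=\sigma_{xz}$ (which must hold because each finite "test set" of basis elements is matched by a genuine automorphism), one assembles a global Schur multiplication $M_\sigma$; after composing with $M_\sigma^{-1}$ one reduces to $\eta$ fixing all $e_{xy}$, and since the $e_{xy}$ span a dense enough subset (together with $R$-linearity and the finitary condition) this gives $\eta=\operatorname{id}$ on a generating set, hence $\eta\in\operatorname{Aut}(FI(P))$. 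The main obstacle I anticipate is precisely this last step: controlling how a single local automorphism interacts with the three-fold decomposition $\psi_f\circ M_\sigma\circ\widehat\rho$ of the witnessing automorphisms, since different finite test sets may a priori be witnessed by automorphisms with wildly different $f$, $\sigma$, $\rho$; the key is that the quotient constraint $\bar\eta=\operatorname{id}$ already kills the $\rho$-component uniformly, and a careful two- and three-element test argument (in the spirit of the proof of Theorem \ref{thm:surjectivelautisaut}, exploiting $|R|\ge 3$ to separate coordinates) pins the $\psi_f$ and $M_\sigma$ data down consistently across all test sets.
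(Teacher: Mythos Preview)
Your overall strategy---pass to the quotient $\Pi\cong FI(P)/Z$, invoke Theorem~\ref{thm:surjectivelautisaut} to obtain $\bar\eta=\widehat\rho$, then peel off $\widehat\rho$, $\psi_f$, $M_\sigma$ in turn---is exactly the paper's approach, and your reduction to $\Pi$ (the content of Corollary~\ref{cor:inducedrho}) is correct. However, there are two substantial gaps in the plan.

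First, you flag but never address showing that the permutation $\rho$ is an \emph{order} automorphism of $(P,\le)$; without this, $\widehat{\rho^{-1}}$ is not an element of $\operatorname{Aut}(FI(P))$ and cannot be composed with $\eta$. The paper handles this in Lemma~\ref{lem:inducedrho1}: for distinct $x,y$ one tests $\eta$ on $a=a_xe_x+a_ye_y$ with $a_x,a_y$ nonzero and distinct (here $|R|\ge 3$ is used), and compares $\rho$ with the witnessing $\rho_a$ coordinatewise on the quotient to force $\rho(x)=\rho_a(x)$ and $\rho(y)=\rho_a(y)$; since $\rho_a$ is an order automorphism, so is $\rho$. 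Relatedly, your claim that ``$\bar\eta=\operatorname{id}$ kills the $\rho$-component uniformly'' is false: the witnessing $\rho_a$ need only preserve the level sets $\{x:a_x=r\}$ of the diagonal of the test element $a$, and is in general far from the identity.

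Second, and more seriously, your plan jumps from $\bar\eta=\operatorname{id}$ directly to $\eta(e_{xy})=\sigma_{xy}e_{xy}$. This does not follow: under $\bar\eta=\operatorname{id}$ one only knows $\eta(e_x)\in e_x+Z$, and $\eta(e_x)$ may have many off-diagonal entries (cf.\ Corollary~\ref{cor:idempow2}), so $\eta(e_{xy})$ need not lie in $Re_{xy}$. The paper's Step~2 is devoted precisely to constructing a global unit $\beta=\sum_{x\in P} e_x\eta(e_x)$ with $\eta(e_x)=\psi_\beta(e_x)$; the hard part (Lemma~\ref{lem:beta2}) is proving $\beta\in FI(P)$ rather than merely $\beta\in I(P)$, which requires the orthogonality relations of Lemmas~\ref{lem:idempow3}--\ref{lem:idempow5} and Theorem~\ref{thm:idempow6}. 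Only after replacing $\eta$ by $\psi_\beta^{-1}\circ\eta$ does one have $\eta(e_x)=e_x$, and only then does the argument for $\eta(e_{xy})\in Re_{xy}$ go through (Theorem~\ref{thm:step3}). Your filtration-by-covers idea does not bypass this step.

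Finally, your finish (``the $e_{xy}$ span a dense enough subset'') is too optimistic for infinite $P$: the $e_{xy}$ generate only finitely supported elements. The paper closes the argument via the invariant ideals $L_{uv}$ of Lemma~\ref{lem:step4b} together with the diagonal result Lemma~\ref{lem:step4a}, recovering $(\eta(a))_{uv}=a_{uv}$ one coordinate at a time (Theorem~\ref{thm:step4finish}).
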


This includes as an important special case the following result.

\begin{corollary} \label{cor:mainlautoffip2}
Let $(P,\le)$ be a poset and $R$ an indecomposable ring with $|R|\ge 3$.
Then every surjective local automorphism of $FI(P)$ is an $R$-algebra automorphism.
\end{corollary}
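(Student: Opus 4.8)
The plan is to derive Corollary \ref{cor:mainlautoffip2} directly from Theorem \ref{thm:mainlautoffip2}. Suppose $\eta$ is a surjective local automorphism of $FI(P)$ with $|R|\ge 3$. The only thing to check is that surjectivity implies the hypothesis $e_x\in\operatorname{Im}(\eta)+Z(FI(P))$ for all $x\in P$; but this hypothesis is automatically satisfied, since $\operatorname{Im}(\eta)=FI(P)\supseteq\{e_x:x\in P\}$ already, so a fortiori $e_x\in\operatorname{Im}(\eta)+Z$. Hence Theorem \ref{thm:mainlautoffip2} applies verbatim and yields that $\eta$ is an $R$-algebra automorphism.

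\begin{proof}
Let $\eta$ be a surjective local automorphism of $FI(P)$. Since $\operatorname{Im}(\eta)=FI(P)$, we certainly have $e_x\in\operatorname{Im}(\eta)\subseteq\operatorname{Im}(\eta)+Z(FI(P))$ for every $x\in P$. Thus the hypothesis of Theorem \ref{thm:mainlautoffip2} is met, and we conclude that $\eta$ is an $R$-algebra automorphism.
\end{proof}

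There is essentially no obstacle here: the corollary is a pure weakening of the theorem, the point being only to record the most natural and frequently-used instance (genuine surjectivity rather than the weaker ``surjective modulo $Z$'' condition) in a clean, citable form. If one wanted to avoid invoking Theorem \ref{thm:mainlautoffip2} and instead mirror the self-contained argument of Theorem \ref{thm:surjectivelautisaut} and Corollary \ref{cor:surjectivelautisaut}, one could alternatively pass to the quotient $FI(P)/Z\cong\Pi=\prod_{x\in P}Re_x$ (Proposition \ref{prop:propZ}(b)), observe that $\eta$ descends to a surjective local automorphism of $\Pi$ because $Z$ is invariant under all of $\operatorname{Aut}(FI(P))$ (Proposition \ref{prop:propZ}(c)), apply Corollary \ref{cor:surjectivelautisaut} to get an induced permutation $\rho$ of $P$, and then lift back; but since Theorem \ref{thm:mainlautoffip2} is already available, the one-line deduction above is the cleanest route.
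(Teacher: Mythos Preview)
Your proposal is correct and matches the paper's own treatment: the paper presents Corollary~\ref{cor:mainlautoffip2} without a separate proof, simply noting that it is an ``important special case'' of Theorem~\ref{thm:mainlautoffip2}. Your one-line deduction that surjectivity trivially gives $e_x\in\operatorname{Im}(\eta)\subseteq\operatorname{Im}(\eta)+Z(FI(P))$ is exactly the intended argument.
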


For part (b) of Theorem \ref{thm:mainlautoffip}, we simply refer to Theorem \ref{thm:maincartprod2}
for a counterexample. Thus we only need to prove part (a), which will be the ultimate goal
of a very elaborate chain of intermediate results. As a general agenda, we will try to mimic the
proof of Theorem \ref{thm:charaut}. Hence, confronted with an arbitrary
$\eta \in \operatorname{LAut}(FI(P))$ we will attempt to split off suitable canonical automorphisms
$\widehat \rho$, $\psi_f$, and $M_\sigma$, showing that the remaining local automorphism is the
identity map on $FI(P)$.

\subsection{Step 1: Splitting off $\widehat \rho$}\mbox{}\medskip

We will use Theorem \ref{thm:maincartprod} to isolate a hopeful candidate $\rho:P \rightarrow P$ for a suitable
order automorphism of $(P,\le)$. Here, the cardinal condition $|P| < \mu_R$ will become crucial in ascertaining that
$\rho$ is surjective.

\begin{proposition} \label{prop:inducedrho}
Let $|P| < \mu_R$ and $\eta \in \operatorname{LAut}(FI(P))$. Then there exists a permutation $\rho: P \rightarrow P$
such that
\begin{align} \label{eq:inducedrho}
\eta (a) \in \sum_{x\in P}a_xe_{\rho(x)} +Z(FI(P)) \mbox{\quad for all } a\in FI(P).
\end{align}
\end{proposition}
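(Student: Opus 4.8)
\textbf{Proof plan for Proposition \ref{prop:inducedrho}.}
The plan is to exploit the fact that the ideal $Z = Z(FI(P))$ is preserved by every $R$-algebra automorphism of $FI(P)$ (Proposition \ref{prop:propZ}(c)), so that any $\eta \in \operatorname{LAut}(FI(P))$ descends to a well-defined map $\bar\eta$ on the quotient $FI(P)/Z \cong \Pi = \prod_{x\in P} Re_x$ (Proposition \ref{prop:propZ}(b)). First I would verify that $\bar\eta$ is a local automorphism of $\Pi$: given $\bar a \in \Pi$, lift it to some $a\in FI(P)$, pick $\varphi \in \operatorname{Aut}(FI(P))$ with $\eta(a)=\varphi(a)$, and observe that $\varphi$ induces $\bar\varphi \in \operatorname{Aut}(\Pi)$ with $\bar\eta(\bar a) = \bar\varphi(\bar a)$; here one uses that the canonical decomposition $\varphi = \psi_f \circ M_\sigma \circ \widehat\rho$ from Theorem \ref{thm:charaut} shows $\bar\varphi$ acts on $\Pi$ as the permutation-induced automorphism coming from $\rho$ (since $\psi_f$ and $M_\sigma$ act trivially on $FI(P)/Z$). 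One should also note $\bar\eta$ is $R$-linear because $\eta$ is and $Z$ is an $R$-submodule.

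Next, since $|P| < \mu_R$, Theorem \ref{thm:maincartprod} gives $\operatorname{LAut}(\Pi) = \operatorname{Aut}(\Pi)$, so $\bar\eta \in \operatorname{Aut}(\Pi)$. By Proposition \ref{prop:inducedauto} there is a permutation $\rho: P \rightarrow P$ with $\bar\eta = \widehat\rho$, i.e. $\bar\eta\bigl(\sum_{x\in P} a_x e_x\bigr) = \sum_{x\in P} a_x e_{\rho(x)}$ in $\Pi$. Unwinding what this means back in $FI(P)$: for any $a = \sum_{x\le y} a_{xy} e_{xy} \in FI(P)$, the image $a + Z$ in the quotient is $\sum_{x\in P} a_{x} e_x$ (recording only the diagonal entries $a_x = a_{xx}$), and $\bar\eta(a+Z) = \eta(a) + Z$, so $\eta(a) + Z = \sum_{x\in P} a_x e_{\rho(x)} + Z$, which is exactly \eqref{eq:inducedrho}.

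The one subtlety I would take care to address is the identification of the diagonal part under the quotient map: I need $a + Z = \sum_{x\in P} a_{xx} e_x$ and the corresponding fact that "$\sum_{x\in P} a_x e_{\rho(x)} + Z$" on the right-hand side of \eqref{eq:inducedrho} is a legitimate coset (it is, since $\rho$ is a permutation and this is just $\widehat\rho(a+Z)$ lifted arbitrarily). I should also confirm $\eta$ itself sends diagonal idempotents to elements that are idempotent modulo $Z$, but this is automatic since $\bar\eta$ is a genuine algebra automorphism, so no extra work is needed. In short, the whole proposition reduces to "pass to the quotient, apply Theorem \ref{thm:maincartprod} and Proposition \ref{prop:inducedauto}, pull back."

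\textbf{Expected main obstacle.} There is no deep obstacle here — this is the easy first reduction step. The only thing requiring a little care is checking that $\bar\eta$ is genuinely a \emph{local} automorphism of $\Pi$ (not merely an $R$-linear map), i.e. that the lifted automorphisms $\varphi$ really do project to automorphisms of $\Pi$ matching $\bar\eta$ on the given element; this is where Theorem \ref{thm:charaut} and Proposition \ref{prop:propZ}(c) do their work. After that, everything is formal.
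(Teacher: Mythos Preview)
Your proposal is correct and follows essentially the same approach as the paper: pass to the quotient $FI(P)/Z \cong \Pi$, check that $\bar\eta$ is a local automorphism there, apply Theorem~\ref{thm:maincartprod} and Proposition~\ref{prop:inducedauto}, and pull back. One minor remark: you do not actually need Theorem~\ref{thm:charaut} at this stage---Proposition~\ref{prop:propZ}(c) alone already gives $\varphi(Z)=Z$, hence $\bar\varphi \in \operatorname{Aut}(\Pi)$, which is all that is required here (the decomposition only becomes relevant in the next step, Lemma~\ref{lem:inducedrho1}).
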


\begin{proof}
For any $R$-algebra automorphisms $\varphi \in \operatorname{Aut}(FI(P))$, we have $\varphi(Z)=Z$, and
$\varphi$ induces a canonical $R$-algebra automorphism $\overline \varphi$ on $FI(P)/Z \cong \prod_{x\in P} Re_x$.
As a consequence, we have $\eta(Z)\subseteq Z$, and
\[
\overline \eta (a+Z)=\eta(a)+Z \mbox{\quad for all } a\in FI(P)
\]
induces a canonical local automorphism $\overline \eta$ on $FI(P)/Z$. Applying
Theorem~\ref{thm:maincartprod} yields $\overline \eta \in \operatorname{Aut}(FI(P)/Z)$,
and with Proposition \ref{prop:inducedauto} there exists a permutation $\rho: P \rightarrow P$ with
\begin{align} \label{eq:inducedrho2}
\overline \eta \left(\sum_{x\in P}a_xe_x +Z\right) = \sum_{x\in P}a_xe_{\rho(x)} +Z \mbox{\quad for all } a\in FI(P).
\end{align}
Equation \eqref{eq:inducedrho} is now immediate.
\end{proof}

Replacing Theorem \ref{thm:maincartprod} in the last proof by Theorem \ref{thm:surjectivelautisaut}
leads to the following corollary as a starting point for the proof of Theorem \ref{thm:mainlautoffip2}.

\begin{corollary} \label{cor:inducedrho}
Let $|R|\ge 3$, and let $\eta \in \operatorname{LAut}(FI(P))$ with
$e_{x}\in\: \operatorname{Im}(\eta)+Z(FI(P))$ for all $x\in P$.
Then there exists a permutation $\rho: P \rightarrow P$ such that
\[
\eta (a) \in \sum_{x\in P}a_xe_{\rho(x)} +Z(FI(P)) \mbox{\quad for all } a\in FI(P).
\]
\end{corollary}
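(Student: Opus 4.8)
The plan is to mimic the proof of Proposition \ref{prop:inducedrho}, replacing the appeal to Theorem \ref{thm:maincartprod} (which required the cardinality hypothesis $|P| < \mu_R$) by an appeal to Theorem \ref{thm:surjectivelautisaut} (which instead requires $|R| \ge 3$ together with a surjectivity-type hypothesis on the image). First I would observe, exactly as in Proposition \ref{prop:inducedrho}, that since every $\varphi \in \operatorname{Aut}(FI(P))$ satisfies $\varphi(Z) = Z$ by Proposition \ref{prop:propZ}(c), a local automorphism $\eta \in \operatorname{LAut}(FI(P))$ also satisfies $\eta(Z) \subseteq Z$: given $a \in Z$, choose $\varphi$ agreeing with $\eta$ at $a$, so $\eta(a) = \varphi(a) \in \varphi(Z) = Z$. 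Hence $\eta$ descends to a well-defined $R$-linear map $\overline \eta$ on $FI(P)/Z \cong \prod_{x\in P} Re_x$, and $\overline \eta$ is again a local automorphism of $\prod_{x\in P} Re_x$: for $\overline a = a + Z$, pick $\varphi \in \operatorname{Aut}(FI(P))$ with $\varphi(a) = \eta(a)$; then the induced $\overline \varphi \in \operatorname{Aut}(\prod_{x\in P} Re_x)$ agrees with $\overline \eta$ at $\overline a$.

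Next I would translate the hypothesis $e_x \in \operatorname{Im}(\eta) + Z(FI(P))$ into the hypothesis needed to invoke Theorem \ref{thm:surjectivelautisaut} for $\overline \eta$. Writing $e_x$ for the image of $e_{xx}$ in $FI(P)/Z$, the assumption says precisely that $e_x \in \operatorname{Im}(\overline \eta)$ for every $x \in P$: if $e_{xx} = \eta(b) + z$ with $z \in Z$, then $e_x = \overline \eta(b + Z)$. Thus $\overline \eta$ is a local automorphism of $\Pi = \prod_{x \in P} Re_x$ whose image contains all primitive idempotents $e_x$, and $|R| \ge 3$, so Theorem \ref{thm:surjectivelautisaut} applies and gives $\overline \eta = \widehat \rho$ for some permutation $\rho : P \to P$. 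Equivalently,
\[
\overline \eta \left( \sum_{x \in P} a_x e_x + Z \right) = \sum_{x\in P} a_x e_{\rho(x)} + Z \mbox{\quad for all } a \in FI(P),
\]
and unwinding the definition of $\overline \eta$ this is exactly the asserted containment $\eta(a) \in \sum_{x\in P} a_x e_{\rho(x)} + Z(FI(P))$ for all $a \in FI(P)$, noting that the $Z$-class of $a$ is determined by the diagonal entries $a_{xx} = a_x$.

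I do not expect a serious obstacle here; the proof is essentially a bookkeeping argument that reduces to the already-established Theorem \ref{thm:surjectivelautisaut}. The one point that deserves a moment of care is checking that $\overline \eta$ is genuinely a local automorphism of $FI(P)/Z$ rather than merely an $R$-linear map — this uses that canonical automorphisms of $FI(P)$ descend to automorphisms of the quotient, which is exactly what Theorem \ref{thm:charaut} and Proposition \ref{prop:propZ}(c) provide. A second minor point is making sure the hypothesis is stated for all $x \in P$ simultaneously, since Theorem \ref{thm:surjectivelautisaut} needs $e_x \in \operatorname{Im}(\overline\eta)$ for every $x$, not just for one; but that is precisely how the corollary's hypothesis is phrased. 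With those checks in place the argument is complete.
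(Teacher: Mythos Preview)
Your proposal is correct and follows exactly the approach the paper itself indicates: replace the appeal to Theorem~\ref{thm:maincartprod} in the proof of Proposition~\ref{prop:inducedrho} by an appeal to Theorem~\ref{thm:surjectivelautisaut}, using the hypothesis $e_x \in \operatorname{Im}(\eta) + Z(FI(P))$ to supply the condition $e_x \in \operatorname{Im}(\overline\eta)$ that Theorem~\ref{thm:surjectivelautisaut} requires. The bookkeeping checks you flag (that $\overline\eta$ is a genuine local automorphism of the quotient, and that the hypothesis is needed for all $x$) are exactly the right points to verify, and your handling of them is sound.
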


We need to show that the permutation $\rho:P\rightarrow P$ in Proposition \ref{prop:inducedrho} is
actually an order automorphism of $(P,\le)$. This will need a more detailed knowledge of the
structure of $\operatorname{Aut}(FI(P))$. Our arguments will transfer immediately to Corollary~\ref{cor:inducedrho}.

\begin{lemma} \label{lem:inducedrho1}
Let $|P| < \mu_R$, $|R|\ge 3$, and $\eta \in \operatorname{LAut}(FI(P))$. Then there exists an order automorphism
$\rho$ of $(P,\le)$ such that $\eta (a) \in \widehat \rho(a) +Z(FI(P))$ for all $a\in FI(P)$.
\end{lemma}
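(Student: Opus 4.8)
The plan is to upgrade the permutation $\rho$ from Proposition~\ref{prop:inducedrho} to an order automorphism of $(P,\le)$ by testing $\eta$ on carefully chosen elements of $FI(P)$ and exploiting the fact that, locally, $\eta$ agrees with a genuine $R$-algebra automorphism $\varphi = \psi_f \circ M_\sigma \circ \widehat{\tau}$ (Theorem~\ref{thm:charaut}). The key observation is that such $\varphi$ maps $e_{uv}$ (for $u < v$) into the ``span'' of the single matrix unit $e_{\tau(u)\tau(v)}$ — more precisely $\varphi(e_{uv})$ has all its nonzero coordinates among entries $e_{xy}$ with $x = \tau(u)$, $y = \tau(v)$ — and in particular $\varphi(e_{uv}) \ne 0$ forces $\tau(u) < \tau(v)$. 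So the strategy is: fix $u < v$ in $P$; apply the local automorphism property to $a = e_{uv}$ to get $\varphi_a \in \operatorname{Aut}(FI(P))$ with $\eta(e_{uv}) = \varphi_a(e_{uv})$; decompose $\varphi_a = \psi_{f} \circ M_{\sigma} \circ \widehat{\tau}$; and show $\tau$ must agree with $\rho$ on $\{u,v\}$, whence $\rho(u) < \rho(v)$.

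First I would pin down how $\rho$ and $\tau$ interact. Using $a = e_u = e_{uu}$ together with~\eqref{eq:inducedrho}, we have $\eta(e_u) \in e_{\rho(u)} + Z$; on the other hand $\varphi(e_u) = \psi_f(M_\sigma(\widehat\tau(e_u))) = \psi_f(e_{\tau(u)})$, and since conjugation by a unit $f$ fixes the diagonal modulo $Z$ (as $f$ is invertible, its diagonal entries are units and $\psi_f(e_{\tau(u)}) \in e_{\tau(u)} + Z$), we get $\varphi(e_u) \in e_{\tau(u)} + Z$. Comparing diagonals (the decomposition $FI(P)/Z \cong \prod_x Re_x$ is canonical), $\rho(u) = \tau(u)$; similarly $\rho(v) = \tau(v)$. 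Now $\eta(e_{uv}) = \varphi_{e_{uv}}(e_{uv}) \ne 0$ because $\eta$ is injective (Proposition~\ref{prop:triviallautoproperties}(a)) and $e_{uv} \ne 0$. But $\varphi(e_{uv}) = \psi_f(\sigma_{\tau(u)\tau(v)} e_{\tau(u)\tau(v)})$, which is nonzero only if $\tau(u) \le \tau(v)$, and since $\tau(u) = \rho(u) \ne \rho(v) = \tau(v)$, we conclude $\rho(u) < \rho(v)$. Applying the symmetric argument to $\rho^{-1}$ (which exists since $\rho$ is a permutation; here I would note that $\rho^{-1}$ arises the same way from the local automorphism that $\eta^{-1}$ would be, or more directly: apply the above with the roles of $u < v$ and their images swapped, using surjectivity of $\rho$) gives that $x < y \iff \rho(x) < \rho(y)$, i.e.\ $\rho$ is an order automorphism.

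Once $\rho$ is an order automorphism, $\widehat{\rho} \in \operatorname{Aut}(FI(P))$ is defined, and for $a = \sum_{x \le y} a_{xy} e_{xy}$ we have $\widehat{\rho}(a) = \sum_{x\le y} a_{xy} e_{\rho(x)\rho(y)} \in \sum_{x\in P} a_x e_{\rho(x)} + Z$; combining with~\eqref{eq:inducedrho} yields $\eta(a) \in \widehat{\rho}(a) + Z$ for all $a$, as claimed. The main obstacle I anticipate is the careful bookkeeping in the step where we pass from $\varphi$ locally agreeing with $\eta$ on a specific element to a statement about $\tau$: one must be sure that the canonical decomposition $\varphi = \psi_f \circ M_\sigma \circ \widehat\tau$ interacts cleanly with reduction mod $Z$ (so that $\tau$ is forced to equal $\rho$ on the relevant points), and that $\psi_f$ and $M_\sigma$ cannot ``rescue'' a vanishing $e_{\tau(u)\tau(v)}$ — i.e.\ that $\varphi(e_{uv}) = 0$ genuinely forces $\tau(u) \not< \tau(v)$. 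Both points follow from the explicit formulas for $\psi_f$, $M_\sigma$, $\widehat\tau$ and the fact that $\operatorname{Mult}$ uses units $\sigma_{xy}$, but they deserve to be spelled out. Since the proof nowhere used that $\rho$ came from Proposition~\ref{prop:inducedrho} rather than Corollary~\ref{cor:inducedrho}, the identical argument proves the analogous upgrade needed for Theorem~\ref{thm:mainlautoffip2}.
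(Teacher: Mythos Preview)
There is a genuine gap. You pick $\varphi=\varphi_{e_{uv}}$ so that $\eta(e_{uv})=\varphi(e_{uv})$, decompose $\varphi=\psi_f\circ M_\sigma\circ\widehat\tau$, and then argue that $\eta(e_u)\in e_{\rho(u)}+Z$ while $\varphi(e_u)\in e_{\tau(u)}+Z$, concluding $\rho(u)=\tau(u)$. But the local automorphism property only gives $\eta(e_{uv})=\varphi(e_{uv})$; there is no reason whatsoever for $\eta(e_u)$ to agree with $\varphi(e_u)$ (even modulo $Z$), so you cannot compare those two cosets. The same objection applies to $\rho(v)=\tau(v)$. Since $e_{uv}\in Z$ for $u<v$, the element $e_{uv}$ carries no diagonal information at all, and a single application of the $1$-local property to it cannot by itself pin down where $\tau$ sends $u$ and $v$. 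A symptom of the problem is that your argument never uses the hypothesis $|R|\ge 3$, yet the lemma is stated with that hypothesis and the case $|R|=2$ genuinely requires the separate treatment of Lemma~\ref{lem:inducedrho2}.

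The paper's fix is to choose the test element so that a \emph{single} application of the local property yields simultaneous information at two points. Given distinct $x,y\in P$, pick $a_x,a_y\in R\setminus\{0\}$ with $a_x\neq a_y$ (this is exactly where $|R|\ge 3$ is used) and set $a=a_xe_x+a_ye_y$. For the one automorphism $\varphi_a=\psi_{f_a}\circ M_{\sigma_a}\circ\widehat{\rho_a}$ with $\eta(a)=\varphi_a(a)$, reducing modulo $Z$ and using that $\psi_{f_a}$ and $M_{\sigma_a}$ act trivially on $FI(P)/Z$ gives
\[
a_xe_{\rho(x)}+a_ye_{\rho(y)}=a_xe_{\rho_a(x)}+a_ye_{\rho_a(y)}.
\]
Because $a_x,a_y$ are nonzero and distinct, this forces $\rho(x)=\rho_a(x)$ and $\rho(y)=\rho_a(y)$ at once; now the order automorphism $\rho_a$ transfers the order relation between $x$ and $y$ to $\rho(x)$ and $\rho(y)$. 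Your outline can be repaired along these lines, but as written the key identification $\tau(u)=\rho(u)$, $\tau(v)=\rho(v)$ is unjustified.
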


\begin{proof} We continue investigating the permutation $\rho:P\rightarrow P$ from the proof of
Proposition \ref{prop:inducedrho}.

For any $a \in FI(P)$, there exists some $\varphi_a \in \operatorname{Aut}(FI(P))$ with $\eta(a)=\varphi_a(a)$.
With Theorem~\ref{thm:charaut} we have $\varphi_a = \psi_{f_a} \circ M_{\sigma_a} \circ \widehat \rho_a$,
and
\[
\overline \eta(a+Z) = \overline{\varphi_a}(a+Z) = \left( \overline{\psi_{f_a}} \circ \overline{M_{\sigma_a}} \circ \overline{\widehat{\rho}_a} \right) (a+Z)
=\left( \overline{\psi_{f_a}} \circ \overline{M_{\sigma_a}} \right) \left(\sum_{x\in P}a_xe_{\rho_a(x)} +Z\right)
\]
holds for the induced maps on $FI(P)/Z$. Note, however, that $\psi_{f_a}$ and $M_{\sigma_a}$ induce
the identity map on $FI(P)/Z$. Thus,
\begin{align} \label{eq:inducedrho3a}
\overline \eta \left(\sum_{x\in P}a_xe_x +Z\right) = \sum_{x\in P}a_xe_{\rho_a(x)} +Z \mbox{\quad for all } a\in FI(P).
\end{align}
Comparing \eqref{eq:inducedrho2} and \eqref{eq:inducedrho3a} yields
\begin{align} \label{eq:inducedrho3}
\sum_{x\in P}a_xe_{\rho(x)} = \sum_{x\in P}a_xe_{\rho_a(x)} \mbox{\quad for all } a\in FI(P).
\end{align}

Let now $x,y \in P$ be arbitrary distinct elements, and choose $a_x, a_y \in R \setminus \{0\}$ with $a_x \not= a_y$.
Application of \eqref{eq:inducedrho3} to the element $a = a_xe_x+a_ye_y$ yields
\[
a_xe_{\rho(x)}+a_ye_{\rho(y)} = a_xe_{\rho_a(x)}+a_ye_{\rho_a(y)}.
\]
Thus, comparing coordinates, we have
\begin{align} \label{eq:inducedrho4}
\rho(x) = \rho_a(x) \mbox{\quad and \quad} \rho(y) = \rho_a(y).
\end{align}

Now, if $x < y$, then the order automorphism $\rho_a$ of $(P,\le)$ yields $\rho_a(x) < \rho_a(y)$, and
\[
\rho(x)= \rho_a(x) < \rho_a(y) = \rho(y).
\]
Similarly, $x> y$ yields $\rho(x)> \rho(y)$,
while $x,y$ incomparable yields $\rho(x),\rho(y)$ incomparable, and $\rho$ is an order automorphism.
\end{proof}

Once again, the situation $|R|= 2$ has to be treated as an exceptional case and will need some new ideas.
Note that for $|R|=2$ we have $|P| < \mu_R = \aleph_0$, and $(P,\le)$ is a finite poset. For any $x\in P$, let
$h(x)$ denote the \emph{height of $x$}, the size of a largest chain in $(P,\le)$ with maximal element $x$.
Thus, $h(x)=1$ if and only if $x$ is a minimal element of $(P,\le)$. Note, that
order automorphisms preserve heights.

The following lemma holds for indecomposable rings $R$ of  arbitrary size.

\begin{lemma} \label{lem:inducedrho2}
Let $P$ be finite, and $\eta \in \operatorname{LAut}(FI(P))$. Then there exists an order automorphism
$\rho$ of $(P,\le)$ such that $\eta (a) \in \widehat \rho(a) +Z(FI(P))$ for all $a\in FI(P)$.
\end{lemma}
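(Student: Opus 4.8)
The plan is to refine the permutation $\rho: P\to P$ from Proposition \ref{prop:inducedrho} (applied here with $|P|<\mu_R=\aleph_0$ automatic since $P$ is finite) to an order automorphism, but without the hypothesis $|R|\ge 3$ that was used in Lemma \ref{lem:inducedrho1}. Recall that from the proof of Proposition \ref{prop:inducedrho} we already know $\overline\eta=\widehat\rho$ on $FI(P)/Z$, and that for each $a\in FI(P)$ there is a canonical decomposition $\varphi_a=\psi_{f_a}\circ M_{\sigma_a}\circ\widehat{\rho}_a$ with $\rho_a$ an order automorphism of $(P,\le)$ and $\eta(a)=\varphi_a(a)$; passing to $FI(P)/Z$ and comparing with $\overline\eta=\widehat\rho$ gives $\sum_{x\in P}a_xe_{\rho(x)}=\sum_{x\in P}a_xe_{\rho_a(x)}$ for all $a$, exactly as in \eqref{eq:inducedrho3}. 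The difficulty over $R=\mathbb Z_2$ is that the diagonal entries $a_x$ are all $0$ or $1$, so we can no longer separate the coordinates of an element of the form $a_xe_x+a_ye_y$ by choosing distinct nonzero coefficients.

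First I would choose, for a fixed $x\in P$, the element $a=e_x$ and read off from \eqref{eq:inducedrho3} that $\rho(x)=\rho_a(x)$, so $\rho$ agrees with the order automorphism $\rho_a$ at the single point $x$; since $\rho_a$ preserves heights, $h(\rho(x))=h(\rho_a(x))=h(x)$. Hence $\rho$ itself preserves heights: $h(\rho(x))=h(x)$ for all $x\in P$. Next I would show $\rho$ is an order automorphism by induction on the height difference of comparable pairs. Suppose $x<y$ with $y$ covering $x$ (so $h(y)=h(x)+1$); take $a=e_x+e_y$. Equation \eqref{eq:inducedrho3} now gives $e_{\rho(x)}+e_{\rho(y)}=e_{\rho_a(x)}+e_{\rho_a(y)}$ as elements of $\prod_{z}Re_z$ with $R=\mathbb Z_2$, i.e.\ $\{\rho(x),\rho(y)\}=\{\rho_a(x),\rho_a(y)\}$ as sets. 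Since $h(\rho(x))=h(x)<h(y)=h(\rho(y))$ and $h(\rho_a(x))=h(x)<h(y)=h(\rho_a(y))$, the height labels force $\rho(x)=\rho_a(x)$ and $\rho(y)=\rho_a(y)$ — this is where preservation of heights rescues us from the failure of coordinate-separation. Because $\rho_a$ is an order automorphism and $x<y$, we conclude $\rho(x)=\rho_a(x)<\rho_a(y)=\rho(y)$. Iterating along a saturated chain from any $x$ up to any $y>x$, composing covering relations, yields $\rho(x)<\rho(y)$ whenever $x<y$. The same argument applied to $\rho^{-1}$ (which is again a permutation satisfying the analogous identity, obtained by running the local-automorphism property of $\eta$ the other way, or simply because $\rho$ is a height-preserving bijection whose restriction to each covering pair is order-preserving) shows $x\not<y\implies\rho(x)\not<\rho(y)$; for incomparable $x,y$, neither $\rho(x)<\rho(y)$ nor $\rho(y)<\rho(x)$ can hold, so incomparability is preserved. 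Therefore $\rho$ is an order automorphism of $(P,\le)$.

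It then remains to transfer the conclusion from $FI(P)/Z$ back to $FI(P)$: for any $a\in FI(P)$, the element $\eta(a)-\widehat\rho(a)$ maps to $\overline\eta(a+Z)-\widehat\rho(a)+Z=0$ in $FI(P)/Z$ by \eqref{eq:inducedrho2} together with the fact that $\widehat\rho$ descends to $\widehat\rho$ on the quotient, so $\eta(a)\in\widehat\rho(a)+Z(FI(P))$, which is exactly the assertion. The main obstacle, as indicated, is that the sole tool \eqref{eq:inducedrho3} provides only set equalities $\{\rho(x),\rho(y)\}=\{\rho_a(x),\rho_a(y)\}$ over $\mathbb Z_2$ rather than coordinatewise equalities; the height function $h$, invariant under all order automorphisms and hence under each $\rho_a$, is precisely the extra invariant needed to break these ties, and I expect setting up the covering-relation induction cleanly to be the only delicate point — everything else is bookkeeping in the finite poset $(P,\le)$.
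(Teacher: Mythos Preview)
Your approach is essentially the paper's: use \eqref{eq:inducedrho3} with $a=e_x$ to see that $\rho$ preserves heights, then for distinct $x,y$ use $a=e_x+e_y$ and break the two-way ambiguity $\{\rho(x),\rho(y)\}=\{\rho_a(x),\rho_a(y)\}$ via the height function. Two small points: your parenthetical ``so $h(y)=h(x)+1$'' for a covering pair is false in general (covering relations can skip height levels), though you never actually use it; and the detour through covering relations plus the vague appeal to $\rho^{-1}$ is unnecessary --- the paper applies the height argument directly to \emph{any} pair $x<y$ (since $h(x)<h(y)$ holds regardless), and for incomparable $x,y$ simply notes that $\{\rho(x),\rho(y)\}=\{\rho_a(x),\rho_a(y)\}$ is already a pair of incomparable elements because $\rho_a$ is an order automorphism.
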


\begin{proof}
We will make a more careful use of Equation \eqref{eq:inducedrho3}.

First, consider the element $a=e_x$ for some arbitrary element $x\in P$. With \eqref{eq:inducedrho3} we have
$e_{\rho(x)} = e_{\rho_a(x)}$. Thus $\rho(x) = \rho_a(x)$, and as $\rho_a$ preserves heights,
\begin{align} \label{eq:inducedrho5}
h(\rho(x))= h(\rho_a(x))=h(x) \mbox{\quad for all } x\in P.
\end{align}

Next, consider the element $a=e_x+e_y$ for distinct elements $x,y\in P$.  With \eqref{eq:inducedrho3} we have
$e_{\rho(x)}+e_{\rho(y)} = e_{\rho_a(x)}+e_{\rho_a(y)}$. Thus, either
\begin{align} \label{eq:inducedrho6}
\rho(x)=\rho_a(x),\, \rho(y)=\rho_a(y) \mbox{ \quad or \quad} \rho(x)=\rho_a(y),\, \rho(y)=\rho_a(x).
\end{align}
If $x<y$, then $h(x)<h(y)$, and the order automorphism $\rho_a$ of $(P,\le)$ yields
\[
h(\rho_a(x))=h(x) < h(y)=h(\rho_a(y)).
\]
However, $\rho(x)=\rho_a(y)$, $\rho(y)=\rho_a(x)$ yields with \eqref{eq:inducedrho3} that
\[
h(\rho_a(x))=h(\rho(y))=h(y) > h(x)=h(\rho(x))=h(\rho_a(y)),
\]
a contradiction. Thus $\rho(x)=\rho_a(x)$, $\rho(y)=\rho_a(y)$ holds, and
\[
\rho(x)= \rho_a(x) < \rho_a(y) = \rho(y).
\]
Similarly, $x> y$ yields $\rho(x)> \rho(y)$,
while $x,y$ incomparable yields $\rho(x),\rho(y)$ incomparable, and $\rho$ is an order automorphism.
\end{proof}

We can combine Lemmas \ref{lem:inducedrho1} and \ref{lem:inducedrho2} into one statement.

\begin{theorem} \label{thm:inducedrho}
Let $|P| < \mu_R$ and $\eta \in \operatorname{LAut}(FI(P))$. Then there exists an order automorphism
$\rho$ of $(P,\le)$ such that $\eta (a) \in \widehat \rho(a) +Z(FI(P))$ for all $a\in FI(P)$.
\end{theorem}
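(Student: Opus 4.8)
The plan is to reduce Theorem~\ref{thm:inducedrho} to the two cases already handled. The hypothesis $|P| < \mu_R$ splits naturally according to whether $R$ is finite or infinite. If $R$ is finite, then $\mu_R = \aleph_0$ by Definition~\ref{def:muR}, so $|P| < \aleph_0$ means $P$ is finite, and Lemma~\ref{lem:inducedrho2} applies directly to give the desired order automorphism $\rho$. If $R$ is infinite, then in particular $|R| \ge 3$, so the hypotheses of Lemma~\ref{lem:inducedrho1} are met (we have $|P| < \mu_R$ and $|R| \ge 3$), and that lemma supplies $\rho$. In either case we obtain an order automorphism $\rho$ of $(P,\le)$ with $\eta(a) \in \widehat\rho(a) + Z(FI(P))$ for all $a \in FI(P)$, which is exactly the conclusion.

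First I would state the dichotomy explicitly: either $R$ is finite or $R$ is infinite. In the finite case I would note that $\mu_R = \aleph_0$, deduce $|P|$ is finite, and invoke Lemma~\ref{lem:inducedrho2}. In the infinite case I would observe that $|R| \ge 3$ automatically holds (an infinite ring has at least three elements), so all hypotheses of Lemma~\ref{lem:inducedrho1} are in place, and invoke it. A one-line remark that the conclusions of the two lemmas are verbatim the conclusion of the theorem closes the argument.

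There is essentially no obstacle here: the theorem is a bookkeeping combination of the two preceding lemmas, whose ranges of validity ($|R| = 2$ forcing $P$ finite versus $|R| \ge 3$) together cover every instance of $|P| < \mu_R$. The only point requiring the slightest care is checking that the two lemmas' hypotheses genuinely overlap to cover all cases --- specifically that the finite-$R$ subcase of $|P| < \mu_R$ always lands in Lemma~\ref{lem:inducedrho2}'s scope (it does, since $\mu_R = \aleph_0$ forces $P$ finite), and that the infinite-$R$ subcase always lands in Lemma~\ref{lem:inducedrho1}'s scope (it does, since infinite $R$ has $|R| \ge 3$). I would therefore write the proof as a short two-case split with no calculation, simply citing Lemma~\ref{lem:inducedrho1} and Lemma~\ref{lem:inducedrho2} for the respective cases.

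\begin{proof}
If $R$ is finite, then $\mu_R = \aleph_0$ by Definition~\ref{def:muR}, and $|P| < \mu_R$ means that $P$ is finite. In this case the claim follows from Lemma~\ref{lem:inducedrho2}.

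If $R$ is infinite, then $|R| \ge 3$, and together with $|P| < \mu_R$ the hypotheses of Lemma~\ref{lem:inducedrho1} are satisfied. In this case the claim follows from Lemma~\ref{lem:inducedrho1}.
\end{proof}
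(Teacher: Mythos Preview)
Your proposal is correct and matches the paper's approach exactly: the paper states Theorem~\ref{thm:inducedrho} as the combination of Lemmas~\ref{lem:inducedrho1} and~\ref{lem:inducedrho2} without further proof, and your case split on whether $R$ is finite or infinite is precisely the intended way to see that the two lemmas together cover the hypothesis $|P|<\mu_R$.
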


In particular, replacing $\eta$ with $\widehat \rho^{-1} \circ \eta$, we may, without loss of
generality, assume that the local automorphism $\eta$ induces the identity map on $FI(P)/Z$.

\subsection{Step 2: Splitting off $\psi_f$}\mbox{}\medskip

For $X\subseteq P$ define $e_{X}=\sum_{x\in X}e_{x}$. We start with some general observation
on the structure of idempotents.

\begin{lemma} \label{lem:idempow1}
Let $X\subseteq P$ and let
\[
a=\underset{y\le z}{\sum }a_{yz}e_{yz}=e_{X}+\underset{y<z}{\sum }a_{yz}e_{yz}\in e_{X}+Z(FI(P))
\]
be an idempotent element. Then $a_{yz}\neq 0$ implies $y\leq x\leq z$ for some $x\in X$.
\end{lemma}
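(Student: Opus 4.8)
The plan is to expand the idempotent equation $a=a^2$ coordinatewise, extract a recursion for the off-diagonal entries $a_{yz}$, and then run an induction along sub-intervals that terminates because of the finiteness condition built into the definition of $FI(P)$.

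First I would write $a=e_{X}+n$ with $n=a-e_{X}\in Z=Z(FI(P))$, so that $a_{yy}=1$ for $y\in X$ and $a_{yy}=0$ for $y\notin X$. Computing the $(y,z)$-coordinate of $a^{2}$ for a pair $y<z$ gives
\[
(a^{2})_{yz}=a_{yy}a_{yz}+a_{yz}a_{zz}+\sum_{y<w<z}a_{yw}a_{wz},
\]
a finite sum since $a\in FI(P)$. Comparing with $(a^{2})_{yz}=a_{yz}$ and specializing to $y\notin X$, $z\notin X$ (so that $a_{yy}=a_{zz}=0$) yields the key identity
\[
a_{yz}=\sum_{y<w<z}a_{yw}a_{wz}\mbox{\qquad whenever } y<z,\ y\notin X,\ z\notin X.
\]

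Now fix $y<z$ with $[y,z]\cap X=\emptyset$ and suppose, towards a contradiction, that $a_{yz}\neq 0$. Consider $T=\{(u,v):y\le u<v\le z,\ a_{uv}\neq 0\}$, which is nonempty (it contains $(y,z)$) and finite by the defining property of $FI(P)$. Pick $(u,v)\in T$ minimal for the sub-interval order $(u',v')\preceq(u,v)\Longleftrightarrow u\le u'<v'\le v$. Since $[u,v]\subseteq[y,z]$ we have $u,v\notin X$, so the key identity applies and $a_{uv}=\sum_{u<w<v}a_{uw}a_{wv}$. For each $w$ with $u<w<v$, if $a_{uw}\neq 0$ then $(u,w)\in T$ with $(u,w)\prec(u,v)$, contradicting minimality; hence every summand vanishes, $a_{uv}=0$, contradicting $(u,v)\in T$. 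Therefore $a_{yz}=0$, which is the contrapositive of the assertion.

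I expect the only genuine subtlety to be the well-foundedness of the induction: a priori the recursion for $a_{yz}$ involves the entries $a_{yw}$, $a_{wz}$ over possibly infinitely many intermediate points $w$, and $(P,\le)$ need not satisfy any descending-chain condition. What makes the argument close is precisely that $a\in FI(P)$ forces $\{(u,v):y\le u<v\le z,\ a_{uv}\neq 0\}$ to be finite, so $T$ has a minimal element. No case analysis on $X$ is needed beyond the hypothesis $[y,z]\cap X=\emptyset$, which already annihilates the diagonal contributions $a_{yy}a_{yz}$ and $a_{yz}a_{zz}$.
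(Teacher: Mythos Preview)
Your argument is correct. Both your proof and the paper's proof hinge on the same finiteness ingredient, namely that the set $\{(u,v):y\le u<v\le z,\ a_{uv}\neq 0\}$ is finite because $a\in FI(P)$, but they exploit it differently. The paper expands $a=a^{n}$ for $n$ larger than the cardinality of this set and argues that any nonzero contribution to the $e_{uv}$-coefficient of $a^{n}$ must come from a chain $u=w(0)\le\cdots\le w(n)=v$ in which at least one $w(i)$ lies in $X$; otherwise all inequalities would be strict, producing $n$ distinct pairs in a set of size less than $n$. You instead stay with $a=a^{2}$, derive the recursion $a_{yz}=\sum_{y<w<z}a_{yw}a_{wz}$ on intervals avoiding $X$, and then invoke well-foundedness of the sub-interval order on the finite set $T$ to reach a contradiction at a minimal element. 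Your approach is a bit more economical (only the square is needed, and the induction is transparent), while the paper's power argument avoids setting up the auxiliary partial order on $T$ and instead runs a pigeonhole count directly; both are short and essentially equivalent in difficulty.
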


\begin{proof}
Let $u<v$ with $a_{uv}\neq 0$ and consider the equation
\begin{align} \label{eq:idempow1}
a=a^{n}=\left(
\underset{x\in X}{\sum }e_{x}+\underset{y<z}{\sum }a_{yz}e_{yz}\right)^{n},
\end{align}
whose right-hand side must produce a nonzero coefficient at $e_{uv}$.
Pick
\begin{align} \label{eq:idempow2}
n>\left\vert \{(y,z):u\leq y<z\leq v,a_{yz}\neq 0\}\right\vert.
\end{align}
There exist elements $w(0),w(1),\ldots,w(n)\in P$ such that the product
\[
e_{uv}=e_{w(0)w(1)}e_{w(1)w(2)}e_{w(2)w(3)}\ldots e_{w(n-1)w(n)}
\]
makes a nonzero contribution
\[
\prod_{i=0}^{n-1} a_{w(i)w(i+1)}
\]
at $ e_{uv}$ to the right-hand side of \eqref{eq:idempow1}.
It follows that $u=w(0)$, $v=w(n)$ and $w(i)\leq w(i+1)$ for all $0\leq i<n$. If
\[
x\notin \{u=w(0),w(1),\ldots,w(n)=v\} \mbox{ for all } x\in X,
\]
then $w(i)<w(i+1)$ with $a_{w(i)w(i+1)}\not=0$ for all $0\leq i<n$. Hence,
\[
(w(i),w(i+1)) \in \{(y,z):u\leq y<z\leq v,a_{yz}\neq 0\}
\]
for all $0\leq i<n$, and
\[
\left\vert \{(y,z):u\leq y<z\leq v,a_{yz}\neq 0\}\right\vert \ge n,
\]
contradicting \eqref{eq:idempow2}. It follows that
$x\in \{w(0),w(1),\ldots,w(n)\}$ for some $x\in X$, and thus $u=w(0)\leq x\leq w(n)=v$.
\end{proof}

In the case of primitive idempotents we can be even more specific.

\begin{corollary} \label{cor:idempow2}
Fix $x\in P$ and let
\[
a=\underset{y\le z}{\sum }a_{yz}e_{yz}=e_{x}+\underset{y<z}{\sum }a_{yz}e_{yz}\in e_{x}+Z(FI(P))
\]
be an idempotent element. Then
\begin{itemize}
\item[(a)]
$a_{uv} = \left\{
  \begin{array}{rl}
     a_{ux}a_{xv}, & \mbox{\quad  for all $u\le x\le v$}, \\
     0, & \mbox{\quad else}.
  \end{array} \right.$
\item[(b)] In particular, $a=ae_xa$.
\end{itemize}
\end{corollary}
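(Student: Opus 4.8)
The plan is to specialize Lemma \ref{lem:idempow1} to the case $X = \{x\}$ and then extract multiplicative structure from the idempotent identity $a = a^2$ (and, iterating, $a = a^n$). First I would invoke Lemma \ref{lem:idempow1} directly: it already gives that $a_{uv} \neq 0$ forces $u \le x \le v$, so the second branch of (a) — that $a_{uv} = 0$ whenever $u,v$ do not straddle $x$ — is immediate. It remains to establish the factorization $a_{uv} = a_{ux}a_{xv}$ for all $u \le x \le v$. Note the trivial boundary cases: $a_{xx} = 1$ (given), and for $u \le x$ the identity reads $a_{ux} = a_{ux}a_{xx} = a_{ux}$, similarly for $x \le v$, so these hold automatically and we only need $u < x < v$.

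For the main identity, fix $u \le x \le v$ and compare the coefficient of $e_{uv}$ on both sides of $a = a^2 = \left(e_x + \sum_{y<z} a_{yz}e_{yz}\right)^2$. Expanding the product $a \cdot a$, the coefficient of $e_{uv}$ is $\sum_{u \le w \le v} a_{uw}a_{wv}$, where the sum runs over all $w$ with $u \le w \le v$. By the part of (a) already proved (the vanishing branch), every term $a_{uw}$ with $w \not\ge x$ vanishes, and every term $a_{wv}$ with $w \not\le x$ vanishes — unless the relevant pair straddles $x$. So in a surviving term $a_{uw}a_{wv}$ we need $u \le x \le w$ (from $a_{uw} \neq 0$) and $w \le x \le v$ (from $a_{wv} \neq 0$), which forces $w = x$. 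Hence the sum collapses to the single term $a_{ux}a_{xv}$, and since the left-hand coefficient is $a_{uv}$, we get $a_{uv} = a_{ux}a_{xv}$, proving (a).

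For (b), using (a) compute the coefficient of $e_{uv}$ in the triple product $a e_x a$. Since $e_x = e_{xx}$ is a single matrix unit, $e_x a$ picks out the ``rows through $x$'': $(e_x a)_{xz} = a_{xz}$ for $x \le z$ and all other coefficients vanish. Then $a (e_x a)$ has coefficient at $e_{uv}$ equal to $a_{ux} \cdot a_{xv}$ when $u \le x \le v$ and $0$ otherwise — which is exactly $a_{uv}$ by part (a). Hence $a = a e_x a$. I do not expect any real obstacle here; the only point requiring care is making sure the index manipulations in the expansion of $a^2$ are legitimate, i.e., that the relevant sums are finite — but this is guaranteed since $a \in FI(P)$ and $u < v$, so only finitely many intermediate pairs contribute. (Alternatively, (b) follows formally from (a) by writing $a e_x a = \left(\sum_{u \le x} a_{ux} e_{ux}\right)\left(\sum_{x \le v} a_{xv} e_{xv}\right)$ and multiplying out.)
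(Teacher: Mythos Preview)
Your proof is correct and follows essentially the same approach as the paper: expand $a=a^2$, use Lemma~\ref{lem:idempow1} (with $X=\{x\}$) to see that a nonzero summand $a_{uw}a_{wv}$ forces $w=x$, and read off the factorization; for (b) the paper uses exactly the alternative you mention at the end, writing $a=\big(\sum_{u\le x}a_{ux}e_{ux}\big)\big(\sum_{x\le v}a_{xv}e_{xv}\big)=ae_x\cdot e_xa$.
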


\begin{proof}
Note that
\[
a=a^2 = \left( \underset{u\le v}{\sum }a_{uv}e_{uv}\right)^2=\underset{u\le z\le v}{\sum }a_{uz}a_{zv}e_{uv}.
\]
Further note that $a_{uz}a_{zv}\neq 0$ with Lemma \ref{lem:idempow1} implies $x\in \lbrack u,z]\cap \lbrack z,v]=\{z\}$,
and thus
\[
a=\underset{u\le x\le v}{\sum }a_{ux}a_{xv}e_{uv}.
\]
For part (b), simply observe that
\[
a=\underset{u\le x\le v}{\sum }a_{ux}a_{xv}e_{ux}e_{xv}=\left( \underset{u\le x}{\sum }a_{ux}e_{ux}\right) \left( \underset{x\le v}{\sum }a_{xv}e_{xv}\right)=ae_x \cdot e_xa =ae_xa. \qedhere
\]
\end{proof}

We include the following result for a slightly different take on the same topic.

\begin{corollary} \label{cor:idempow2b}
Fix $x\in P$. Then the following holds.
\begin{itemize}
\item[(a)] If $a=be_xc$ with $b,c \in FI(P)$, then $a^2=a_xa$.
\item[(b)] The element $a \in e_{x}+Z(FI(P))$ is idempotent if and only if $a=be_xb$ for some $b \in FI(P)$.
\end{itemize}
\end{corollary}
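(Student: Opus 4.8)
The plan is to prove both parts by reducing to Corollary \ref{cor:idempow2}. For part (a), suppose $a=be_xc$ with $b,c\in FI(P)$. Write $b=\sum_{u\le v}b_{uv}e_{uv}$ and $c=\sum_{u\le v}c_{uv}e_{uv}$; then
\[
a=be_xc=\left(\underset{u\le x}{\sum}b_{ux}e_{ux}\right)\left(\underset{x\le v}{\sum}c_{xv}e_{xv}\right)=\underset{u\le x\le v}{\sum}b_{ux}c_{xv}e_{uv},
\]
so $a_{uv}=b_{ux}c_{xv}$ whenever $u\le x\le v$ and $a_{uv}=0$ otherwise. In particular $a_x=a_{xx}=b_{xx}c_{xx}$, and
\[
a^2=\underset{u\le z\le v}{\sum}a_{uz}a_{zv}e_{uv}.
\]
Since $a_{uz}\ne0$ forces $u\le x\le z$ and $a_{zv}\ne0$ forces $z\le x\le v$, any nonzero term has $x\in[u,z]\cap[z,v]=\{z\}$, hence $z=x$, and $a^2=\sum_{u\le x\le v}a_{ux}a_{xv}e_{uv}=\sum_{u\le x\le v}b_{ux}c_{xx}b_{xx}c_{xv}e_{uv}=a_x\,a$, which is (a). (One can also see this abstractly: $e_x a e_x = (e_xbe_x)e_x(e_xce_x)$ and $e_xFI(P)e_x\cong R$ shows $e_xae_x=a_xe_x$, so $a^2=be_xce_x ae_x=b(a_xe_x)ae_x$... but the explicit computation is cleaner.)

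For part (b), the forward direction is exactly Corollary \ref{cor:idempow2}(b): if $a\in e_x+Z(FI(P))$ is idempotent, then $a=ae_xa$, so we may take $b=a$. For the converse, suppose $a\in e_x+Z(FI(P))$ and $a=be_xb$ for some $b\in FI(P)$. By part (a), $a^2=a_xa$. But $a\in e_x+Z(FI(P))$ means $a_{xx}=1$ and $a_{yy}=0$ for all $y\ne x$; in particular $a_x=1$. Hence $a^2=a_xa=a$, so $a$ is idempotent.

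The only subtlety worth flagging is the bookkeeping in part (a): one must be careful that the product $be_xc$ only picks up the "columns through $x$" of $b$ and the "rows through $x$" of $c$, i.e. that $e_{ux}e_{xv}=e_{uv}$ and all other products of the relevant basis elements through $e_x$ vanish, which is immediate from the multiplication rule $e_{su}e_{vt}=\delta_{uv}e_{st}$ (restricted to comparable indices). Given Corollary \ref{cor:idempow2}, neither part presents a real obstacle; the corollary is essentially a repackaging, isolating the two implications "$a=be_xc\Rightarrow a^2=a_xa$" and "idempotent in $e_x+Z\iff$ of the form $be_xb$" for convenient later citation. I would keep the write-up to the short computation above.
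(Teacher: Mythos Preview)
Your proof is correct. Part (b) matches the paper exactly: forward direction via Corollary~\ref{cor:idempow2}(b) with $b=a$, converse via part (a) and $a_x=1$.

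For part (a) your coordinate computation is fine, but the paper's argument is a one-liner you should see: since $e_x FI(P) e_x = Re_x$, one has $e_x cb e_x = (cb)_x e_x = c_x b_x e_x$, whence
\[
a^2=(be_xc)(be_xc)=b(e_xcbe_x)c=b(c_xb_xe_x)c=c_xb_x(be_xc)=a_xa.
\]
This is precisely the ``abstract'' approach you gestured at in your parenthetical, but your sketch there got tangled (you wrote $a^2=be_xce_x ae_x$, which is not right). The clean version just regroups $a^2$ by associativity and collapses the middle $e_x(cb)e_x$ to a scalar times $e_x$; no coordinate expansion is needed. Your explicit computation is effectively re-deriving the key step of Corollary~\ref{cor:idempow2} in this special case, which works but is longer than necessary.
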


\begin{proof}
For (a), simply note $a_x$=$b_xc_x$ and
\[
a^2=(be_xc)(be_xc)=b(e_xcbe_x)c=b(c_xb_xe_x)c=c_xb_x(be_xc)=a_xa.
\]
Part (b) is immediate from Corollary \ref{cor:idempow2}(b) and Corollary \ref{cor:idempow2b}(a).
\end{proof}

We are all set to show the orthogonality of the primitive idempotents $\eta(e_{x})$.

\begin{lemma} \label{lem:idempow3}
Let $\eta \in \operatorname{LAut}(FI(P))$ induce the identity map on $FI(P)/Z$.
Then $\eta(e_{x})\eta(e_{y}) = 0$ holds for all $x \not= y$.
\end{lemma}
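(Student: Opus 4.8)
\textbf{Proof proposal for Lemma \ref{lem:idempow3}.}

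The plan is to fix distinct $x,y\in P$ and exploit that $\eta$, being a local automorphism, agrees with a genuine $R$-algebra automorphism on the pair of elements we care about — but since a local (as opposed to $2$-local) automorphism only controls \emph{one} input at a time, the real work is to package $e_x$ and $e_y$ into a single element on which we can read off orthogonality. First I would set $a = \eta(e_x)$ and $b = \eta(e_y)$; by Proposition \ref{prop:triviallautoproperties}(c) both are (primitive) idempotents, and since $\eta$ induces the identity on $FI(P)/Z$ we have $a\in e_x+Z$ and $b\in e_y+Z$. Corollary \ref{cor:idempow2}(b) then gives $a=ae_xa$ and $b=be_yb$, and Corollary \ref{cor:idempow2}(a) tells us the support of $a$ lies in the "interval strip" through $x$ and that of $b$ in the strip through $y$. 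The goal is $ab=0$.

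The key step is to apply the local-automorphism property to the single element $c = e_x + t\,e_y$ for a suitably chosen scalar $t\in R$ (if $|R|\ge 3$ we have room to pick $t\notin\{0,1\}$; the case $|R|=2$, where $P$ is finite, may need the height argument from Lemma \ref{lem:inducedrho2} or can be folded in by choosing $c=e_x+e_y$ and tracking idempotency). There is $\varphi\in\operatorname{Aut}(FI(P))$ with $\eta(c)=\varphi(c)$. On one hand $\eta(c)=\eta(e_x)+t\,\eta(e_y)=a+tb$ by $R$-linearity. On the other hand, writing $\varphi=\psi_f\circ M_\sigma\circ\widehat\rho$ via Theorem \ref{thm:charaut}, and noting $\eta$ and $\varphi$ both induce the identity on $FI(P)/Z$ (so $\rho=\operatorname{id}$ by the argument already used in Lemma \ref{lem:inducedrho1}/Theorem \ref{thm:inducedrho}), $\varphi$ is an $R$-algebra automorphism fixing $FI(P)/Z$, hence $\varphi(e_x)$ and $\varphi(e_y)$ are \emph{orthogonal} idempotents (images of orthogonal idempotents under a ring automorphism). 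Therefore $a+tb = \varphi(e_x)+t\varphi(e_y)$ with the right-hand summands orthogonal. Multiplying $\eta(c)$ by itself and comparing: $(a+tb)^2 = a^2 + t(ab+ba) + t^2 b^2 = a + t(ab+ba) + t^2 b$, while $(\varphi(e_x)+t\varphi(e_y))^2 = \varphi(e_x)+t^2\varphi(e_y)$. Since $a=\varphi(e_x)+(\text{terms in }Z)$ is not quite literally equal to $\varphi(e_x)$, I instead compare inside $FI(P)$ directly using $\eta(c)=\varphi(c)$: squaring the equality gives $a+t(ab+ba)+t^2b = \varphi(e_x)+t^2\varphi(e_y)$, and subtracting $t$ times $\eta(c)=\varphi(c)$, i.e. $ta+t^2b=t\varphi(e_x)+t^2\varphi(e_y)$, wait — cleaner: from $\eta(c)=\varphi(c)$ and $\eta(c)^2=\varphi(c)^2$ we get $\eta(c)^2-\eta(c)\cdot(\text{appropriate combination})=0$; concretely $\varphi(c)^2=\varphi(e_x)+t^2\varphi(e_y)$ and $\varphi(c)\varphi(e_x)=\varphi(e_x)$, so $\varphi(c)^2 = \varphi(c)\varphi(e_x)+t^2\varphi(c)\varphi(e_y)$...

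Let me restate the clean version: since $\varphi$ is a ring homomorphism, $\varphi(e_x)\varphi(e_y)=\varphi(e_xe_y)=0$, so $\eta(e_x)\eta(e_y)=\varphi(e_x)\varphi(e_y)=0$ \emph{would} follow \emph{if} we knew $\eta(e_x)=\varphi(e_x)$ and $\eta(e_y)=\varphi(e_y)$ for a \emph{common} $\varphi$ — which is exactly what a $2$-local automorphism would grant but a mere local automorphism does not. So the genuine route is: apply locality to $c=e_x+e_y$ to get $\eta(c)=\varphi(c)$ for some $\varphi$ with $\rho_\varphi$ the relevant order automorphism, deduce from Theorem \ref{thm:inducedrho}-type bookkeeping that $\varphi(e_x),\varphi(e_y)$ are the two primitive idempotents $\varphi(e_x)\in e_x+Z$, $\varphi(e_y)\in e_y+Z$, hence $\varphi(c)=\varphi(e_x)+\varphi(e_y)$ is idempotent with orthogonal primitive summands; meanwhile $\eta(c)=\eta(e_x)+\eta(e_y)=a+b$. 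Now $a+b=\varphi(e_x)+\varphi(e_y)$, \emph{and} $a+b$ must itself be idempotent (as $\eta(c)$ equals the idempotent $\varphi(c)$). Expanding $(a+b)^2=a+b$ using $a^2=a$, $b^2=b$ gives $ab+ba=0$. Then multiply on the left by $a$: $a(ab+ba)=ab+aba=0$; multiply on the right by $a$: $(ab+ba)a=aba+ba=0$; subtracting, $ab-ba=0$, so combined with $ab+ba=0$ and $2$-torsion aside we'd get $ab=0$ — but to avoid characteristic-$2$ trouble I would instead pick $c=e_x+t e_y$ with $t\notin\{0,1\}$ (available when $|R|\ge 3$) so that idempotency of $\eta(c)=a+tb$ yields $t(ab+ba)+ (t^2-t)b=0$ after using $a^2=a$, $b^2=b$... hmm, but $\eta(c)$ need not be idempotent when $t\neq 0,1$.

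Final clean plan: use \emph{two} applications. Apply locality to $e_x+e_y$ getting idempotent $\eta(e_x+e_y)=a+b$, so $ab+ba=0$ ($\star$). Separately, apply Corollary \ref{cor:idempow2}(a): the support of $a$ consists of pairs $(u,v)$ with $u\le x\le v$, and likewise $b$'s support has $u'\le y\le v'$; a nonzero term of $ab$ comes from $e_{uv}e_{v v'}$ forcing a chain $u\le x\le v\le y\le v'$, so every nonzero component of $ab$ "passes through both $x$ and $y$ with $x\le y$". Similarly every nonzero component of $ba$ passes through $y$ then $x$ with $y\le x$. If $x,y$ are incomparable, both $ab$ and $ba$ vanish outright and we are done. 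If $x<y$, then $ba=0$ automatically (no chain $y\le x$), so ($\star$) gives $ab=0$; symmetrically if $y<x$ then $ab=0$ and ($\star$) gives $ba=0$, and in either case $ab=0$ as claimed (after possibly also invoking that the surviving product lives in $Z$ and comparing the $e_{xy}$-component via locality applied to $e_x+e_y$ once more with a scalar, if a residual cross term needs to be killed). The main obstacle I anticipate is precisely this last point — ruling out a nonzero $e_{xy}$-type cross term in $ab$ when $x<y$, since ($\star$) only controls $ab+ba$ and $ba$ could conceivably be supported on the same $e_{xy}$ with the opposite sign is impossible (it needs $y\le x$), so in fact $ab=-ba=0$ directly, and the obstacle dissolves; the care needed is just in organizing the case split $x<y$ / $y<x$ / incomparable cleanly and confirming the support computation of $ab$ and $ba$ via Corollary \ref{cor:idempow2}.
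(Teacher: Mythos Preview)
Your final clean plan is correct and is essentially the paper's own argument: use the support description from Corollary~\ref{cor:idempow2} (equivalently Lemma~\ref{lem:idempow1}) to see that $ab$ can be nonzero only when $x\le y$ and $ba$ only when $y\le x$, and combine this with the idempotency of $\eta(e_x+e_y)=a+b$ to get $ab+ba=0$, which then forces $ab=0$ in every case. The paper just organizes the case split as ``$x\nless y$'' versus ``$x<y$'' rather than your three cases, but the content is identical; all of your earlier detours through scalar weights $t$ and through trying to pin down a common $\varphi$ are unnecessary and can be discarded.
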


\begin{proof}
We know that the idempotent elements $\eta (e_{x})$ and $\eta (e_{y})$
are of the forms
\[
\eta (e_{x})=\underset{u\le v}{\sum }\alpha_{uv}e_{uv}=e_{x}+\underset{u<v}{\sum }\alpha_{uv}e_{uv} \mbox{ and }
\eta (e_{y})=\underset{u\le v}{\sum }\beta_{uv}e_{uv}=e_{y}+\underset{u<v}{\sum }\beta_{uv}e_{uv}.
\]
Note that
\[
\eta(e_{x})\eta (e_{y}) = \left( \underset{u\le v}{\sum }\alpha_{uv}e_{uv}\right) \left( \underset{u\le v}{\sum }\beta_{uv}e_{uv}\right)
=\underset{u\le z\le v}{\sum }\alpha_{uz}\beta_{zv}e_{uv}.
\]
We distinguish the following two cases.

{\bf Case 1:} $x\nless y$.\\
Assume $\eta (e_{x})\eta (e_{y})\neq 0$. Pick $u\leq v\in P$ such that $(\eta (e_{x})\eta (e_{y}))_{uv}\neq 0$.
Then there exists some $u\le z\le v$ such that $\alpha_{uz}\neq 0\neq \beta_{zv}$.
By Lemma \ref{lem:idempow1} we get $u\leq x\leq z\leq y\leq v$ and thus $x\leq y$, a contradiction.

{\bf Case 2:} $x< y$.\\
Since $\eta$ preserves idempotents, considering the idempotents $e_x$, $e_y$ and $e_x+e_y$, we have
\begin{eqnarray*}
\eta(e_{x})+\eta(e_{y}) &=& \eta(e_{x}+e_{y})=\eta(e_{x}+e_{y})^{2}=(\eta(e_{x})+\eta(e_{y}))^2\\
&=& \eta(e_{x})^2+\eta(e_{y})^2+\eta(e_{x})\eta(e_{y})+\eta(e_{y})\eta(e_{x})\\
&=& \eta(e_{x})+\eta(e_{y})+\eta(e_{x})\eta(e_{y})+\eta(e_{y})\eta(e_{x}),
\end{eqnarray*}
 and
 \[
 \eta(e_{x})\eta(e_{y})+\eta(e_{y})\eta(e_{x})=0
 \]
 follows. By Case 1, we have $\eta(e_{y})\eta(e_{x})=0$,
 and thus $\eta(e_{x})\eta(e_{y})=0$.
\end{proof}

We want to strengthen Lemma \ref{lem:idempow3} to include $\eta(e_{x})\eta(e_{Y}) = \eta(e_{Y})\eta(e_{x})=0$
for suitably chosen subsets $Y\subseteq P$. This will need a little bit of technical preparation.
As usual, a subset $S$ of a poset $(P,\le)$ has the \emph{ascending chain condition} (acc) if it contains
no infinite strictly ascending chain. Similarly, $S$ has the \emph{descending chain condition} (dcc) if it contains
no infinite strictly descending chain.

\begin{lemma} \label{lem:idempow4}
Let a poset $(P,\le)$ and an infinite set $S\subseteq P$ be given. Then there exists a sequence $(y_i)_{i \in \omega}$
of elements in $S$ that is either strictly ascending, or strictly descending, or consisting of pairwise incomparable
elements in $(P,\le)$.
\end{lemma}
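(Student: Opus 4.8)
The plan is to build the desired sequence greedily, but the naive greedy approach fails because a single ``bad'' element can block us; instead I would combine a Zorn/maximality argument with an infinite Ramsey-type pigeonhole. First, partition $[S]^2$, the set of $2$-element subsets of $S$, into three colour classes according to the relation holding between the two elements: comparable with the first below the second, comparable with the first above the second, or incomparable. (To make this a genuine colouring of unordered pairs one fixes an arbitrary well-ordering of $S$ and records whether the $\le$-relation agrees or disagrees with that well-ordering, together with an ``incomparable'' class.) Since $S$ is infinite, it contains a countably infinite subset, and the infinite Ramsey theorem applied to this $3$-colouring of $[S]^2$ yields an infinite monochromatic subset $T\subseteq S$. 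The three colours correspond exactly to the three conclusions: if $T$ is monochromatic in the ``comparable'' colours then $(T,\le)$ is a chain, hence either has an infinite strictly ascending or an infinite strictly descending subsequence; if $T$ is monochromatic in the ``incomparable'' colour then $T$ is an infinite antichain and any enumeration of it gives pairwise incomparable $(y_i)$.

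The one point needing care is the chain case: an infinite chain need not be order-isomorphic to $\omega$ or to its reverse (think of $\mathbb{Z}$ or $\mathbb{Q}$), so I must extract a strictly monotone $\omega$-sequence from it. This is routine: enumerate the countable chain as $t_0,t_1,t_2,\dots$ and again apply Ramsey (or a direct diagonal argument) to the $2$-colouring of pairs $\{i,j\}$ with $i<j$ by ``$t_i < t_j$'' versus ``$t_i > t_j$''; an infinite monochromatic set gives either a strictly ascending or a strictly descending subsequence indexed by $\omega$. Alternatively, one observes directly that a countably infinite linear order either has a strictly increasing $\omega$-sequence or is well-ordered by the reverse order and hence, being countable, contains a strictly decreasing $\omega$-sequence. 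Either way the chain case is settled.

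I expect the main (and really only) obstacle to be purely bookkeeping: setting up the colouring so that it is a colouring of \emph{unordered} pairs while still encoding the directional information of $\le$, and handling the passage from ``infinite chain'' to ``$\omega$-indexed strictly monotone sequence.'' Neither is deep, but both require the auxiliary well-ordering or enumeration to be introduced cleanly. No large-cardinal or choice-heavy machinery is needed beyond the infinite Ramsey theorem (which already uses only a weak fragment of choice), so the lemma holds in ZFC without restriction on $|S|$ beyond infiniteness; we only ever use a countably infinite subset of $S$.
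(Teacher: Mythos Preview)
Your proof is correct, but it takes a different route from the paper's. You invoke the infinite Ramsey theorem on a $3$-colouring of pairs; the paper instead gives a direct, self-contained argument: assuming $S$ has no infinite ascending or descending chain (so $S$ satisfies both acc and dcc), it iteratively passes to the set of elements below a maximal element, noting that if $\max(S_i)$ were finite at every stage this would produce an infinite descending chain $m_0>m_1>\cdots$, contradicting dcc, so some $\max(S_i)$ is infinite and furnishes the antichain. The paper's argument is more elementary in that it avoids Ramsey entirely and works by a short pigeonhole/recursion; your approach is cleaner conceptually and generalises immediately (it is, after all, just Ramsey). One small redundancy in your write-up: once you colour pairs relative to a fixed enumeration of a countable subset, a monochromatic set in either ``comparable'' colour is already an $\omega$-indexed strictly monotone sequence when listed in enumeration order, so the second Ramsey pass in the chain case is unnecessary.
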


\begin{proof}
Let $S$ be an infinite subset of $P$. Assume that $S$ contains no infinite strictly
ascending or strictly descending chains. Then $S$ has the acc and dcc, and so does any
subset of $S$.

Let $S_0=S$, and let $\max(S_0)$ denote the set of elements maximal in $S_0$.
With acc, we have $\max(S_0)\not= \emptyset$.
Assume $\max(S_0)$ is finite. Then there exists some $m_{0}\in\max(S_0)$ such that
$S_{1}=\{x\in S_0:x<m_{0}\}$ is infinite. Assume $\max(S_{1})\not= \emptyset$ is finite. Then
there exists some $m_{1}\in\max(S_{1})$ such that $S_{2}=\{x\in S_{1} :x<m_{1}\}$ is infinite.
Continue the process. If $\max(S_{i})$ is finite for all $i$, then we
obtain an infinite sequence $m_{0}>m_{1}>\ldots >m_{i}>m_{i+1}>\ldots $, contradicting dcc.
Thus we must have encountered some $i$ such that $\max(S_{i})$
is infinite, and thus an infinite set of pairwise incomparable elements has been found.
\end{proof}

We can put the constructed sequence $(y_i)_{i \in \omega}$ to some good use to generalize the argumentation
of Lemma \ref{lem:idempow3}.

\begin{lemma} \label{lem:idempow5}
Let $\eta \in \operatorname{LAut}(FI(P))$ induce the identity map on $FI(P)/Z$. Then for every infinite set
$S\subseteq P$ there exists some infinite set $Y\subseteq S$ such that
\[
\eta(e_{x})\eta(e_{Y\setminus \{x\}}) = \eta(e_{Y\setminus \{x\}})\eta(e_{x})=0
\]
for all $x \in Y$.
\end{lemma}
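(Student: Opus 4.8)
Here is the proof I would give. The plan is to avoid any case distinction and argue purely with the two idempotents $p:=\eta(e_x)$ and $q:=\eta(e_Y)$, where $Y\subseteq S$ is \emph{any} infinite subset (for concreteness one may take $Y=\{y_i:i\in\omega\}$ for a sequence as in Lemma~\ref{lem:idempow4}, though the monotone/antichain structure is not actually needed). Since $\eta$ induces the identity on $FI(P)/Z$, we have $p\in e_x+Z$ and $q\in e_Y+Z$, and by Proposition~\ref{prop:triviallautoproperties}(b) both are idempotents; moreover $e_{Y\setminus\{x\}}=e_Y-e_x$ in $FI(P)$, so by $R$-linearity $\eta(e_{Y\setminus\{x\}})=q-p$. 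Hence it suffices to prove $pq=qp=p$, because then $(q-p)p=qp-p=0$ and $p(q-p)=pq-p=0$.

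The first and main step is the identity $pqp=p$. From the multiplication rule in $FI(P)$ one reads off, for any $p=\sum_{a\le b}p_{ab}e_{ab}$, that $pe_x=\sum_{u\le x}p_{ux}e_{ux}$ and $e_xp=\sum_{v\ge x}p_{xv}e_{xv}$; here $p_{xx}=1$ because $p\in e_x+Z$, and likewise $q_{xx}=1$ because $q\in e_Y+Z$ and $x\in Y$. Multiplying out $e_xp\cdot q\cdot pe_x$ and using that $e_{xv}e_{ab}\ne 0$ forces $v=a$ while $e_{xb}e_{ux}\ne 0$ forces $b=u$, the only nonzero contribution comes from $a=v=x$ and $b=u=x$, so that
\[
e_x\,p\,q\,p\,e_x=p_{xx}\,q_{xx}\,p_{xx}\,e_{xx}=e_x .
\]
On the other hand, since $e_x$ is a primitive idempotent and $p\in e_x+Z$ is idempotent, Corollary~\ref{cor:idempow2}(b) gives $p=pe_xp$. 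Substituting this for the first and then the last factor of $pqp$ we get $pqp=pe_x(pqp)$ and $pqp=(pqp)e_xp$; combining these, $pqp=pe_x(pqp)e_xp=p\bigl(e_x\,pqp\,e_x\bigr)p=pe_xp=p$.

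It remains to deduce $pq=qp=p$ from $pqp=p$. As $q=p+(q-p)$ is a sum of idempotents which is itself idempotent, expanding $q^2=q$ gives $p(q-p)+(q-p)p=0$, i.e.\ $pq+qp=2p$. Right-multiplying by $p$ yields $pqp+qp^2=2p^2$, so $p+qp=2p$ and $qp=p$; left-multiplying by $p$ yields $p^2q+pqp=2p^2$, so $pq+p=2p$ and $pq=p$. Therefore
\[
\eta(e_x)\eta(e_{Y\setminus\{x\}})=p(q-p)=0 \qquad\text{and}\qquad \eta(e_{Y\setminus\{x\}})\eta(e_x)=(q-p)p=0
\]
for every $x\in Y$, which is the assertion.

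The only delicate point is the identity $pqp=p$: the natural move of substituting $p=pe_xp$ must be arranged so as not to become circular, which is why the coefficientwise computation of $e_xpqpe_x=e_x$ is done first and only then fed into the reduction of $pqp$. Everything else is formal; in particular the argument is uniform in $|R|$ (no separate treatment of $|R|=2$ is needed) and works for an arbitrary infinite $Y\subseteq S$.
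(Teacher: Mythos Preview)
Your proof is correct and takes a genuinely different route from the paper. The paper first invokes Lemma~\ref{lem:idempow4} to extract an infinite $Y\subseteq S$ that is a strictly ascending chain, a strictly descending chain, or an antichain, and then handles the three cases separately using the coefficient estimate of Lemma~\ref{lem:idempow1} together with the pairwise orthogonality Lemma~\ref{lem:idempow3}. You bypass all of this: from $p=pe_xp$ (Corollary~\ref{cor:idempow2}(b)) and the direct computation $e_xpqpe_x=e_x$ you get $pqp=p$, and then the idempotency of $p$, $q-p=\eta(e_{Y\setminus\{x\}})$, and $q$ yields $pq+qp=2p$, whence $pq=qp=p$ by multiplying on either side by $p$. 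In fact your argument uses neither the infinitude of $Y$ nor any chain/antichain structure; it proves $\eta(e_x)\eta(e_X)=\eta(e_X)\eta(e_x)=0$ for \emph{every} $X\subseteq P$ with $x\notin X$, so it simultaneously subsumes Lemma~\ref{lem:idempow3} and renders the passage through Lemma~\ref{lem:idempow4} unnecessary. The paper's approach stays closer to the coordinatewise bookkeeping used elsewhere in the section, while yours is shorter, avoids case distinctions, and is uniform in $|R|$. One minor simplification: once $p=pe_xp$ is in hand you can write $pqp=(pe_xp)q(pe_xp)=p\,(e_xpqpe_x)\,p=pe_xp=p$ in a single line, without the two-step substitution.
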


\begin{proof}
With Lemma \ref{lem:idempow4}, we can choose a sequence $(y_i)_{i \in \omega}$ of elements in $S$ that is
either strictly ascending, or strictly descending, or consisting of pairwise incomparable elements in $(P,\le)$.
Let $Y=\{y_i \mid i \in \omega \}\subseteq S$. We distinguish the following three cases.

{\bf Case 1:} the elements $y_i$ are pairwise incomparable.\\
Assume $\eta(e_{x})\eta(e_{Y\setminus \{x\}})\not=0$ for some $x \in Y$. Pick $u\leq v\in P$ with $(\eta(e_{x})\eta(e_{Y\setminus \{x\}}))_{uv}$ $\neq 0$.
Then there exists some $u\le z\le v$ such that $(\eta(e_{x}))_{uz}\neq 0\neq (\eta(e_{Y\setminus \{x\}}))_{zv}$.
By Lemma \ref{lem:idempow1} we get $u\leq x\leq z\leq y\leq v$ for some $x\not= y \in Y$. Thus $x< y$, contradicting $x,y$ being incomparable.
Similarly, $\eta(e_{Y\setminus \{x\}})\eta(e_{x})=0$ follows.

{\bf Case 2:} the sequence $(y_i)_{i \in \omega}$ is strictly ascending.\\
Assume $\eta(e_{Y\setminus \{x\}})\eta(e_{x})\not=0$ for some $x=y_j \in Y$. With $Y'=\{y_i \mid i > j \}$ and Lemma \ref{lem:idempow3} we have
\begin{eqnarray*}
0\not= \eta(e_{Y\setminus \{x\}})\eta(e_{x}) &=& \eta\left(e_{Y'}+\sum_{i=0}^{j-1} e_{y_i}\right)\eta(e_{y_j})\\
&=& \eta(e_{Y'})\eta(e_{y_j}) +\sum_{i=0}^{j-1} \eta(e_{y_i})\eta(e_{y_j})= \eta(e_{Y'})\eta(e_x).
\end{eqnarray*}
Pick $u\leq v\in P$ with $(\eta(e_{Y'})\eta(e_{x}))_{uv}\neq 0$.
Then there exists some $u\le z\le v$ such that $(\eta(e_{Y'}))_{uz} \neq 0\neq (\eta(e_{x}))_{zv}$.
By Lemma \ref{lem:idempow1} we get $u\leq y\leq z\leq x\leq v$ for some $x\not= y= y_k \in Y'$. Thus $y_k=y<x= y_j$ with $k>j$,
contradicting $(y_i)_{i \in \omega}$ strictly ascending. This shows $\eta(e_{Y\setminus \{x\}})\eta(e_{x})=0$.

Since $\eta$ preserves idempotents, considering the idempotents $e_x$, $e_{Y\setminus \{x\}}$ and $e_Y$, we have
\begin{eqnarray*}
\eta(e_x)+\eta(e_{Y\setminus \{x\}}) &=& \eta(e_x+e_{Y\setminus \{x\}}) = \eta(e_Y)=\eta(e_Y)^2=\eta(e_x+e_{Y\setminus \{x\}})^{2}\\
&=& \eta(e_{x})^2+\eta(e_{Y\setminus \{x\}})^2+\eta(e_{x})\eta(e_{Y\setminus \{x\}})+\eta(e_{Y\setminus \{x\}})\eta(e_{x})\\
&=& \eta(e_{x})+\eta(e_{Y\setminus \{x\}})+\eta(e_{x})\eta(e_{Y\setminus \{x\}})+\eta(e_{Y\setminus \{x\}})\eta(e_{x}),
\end{eqnarray*}
 and
 \[
 \eta(e_{x})\eta(e_{Y\setminus \{x\}})+\eta(e_{Y\setminus \{x\}})\eta(e_{x})=0
 \]
 follows. Thus, $\eta(e_{Y\setminus \{x\}})\eta(e_{x})=0$ implies $\eta(e_{x})\eta(e_{Y\setminus \{x\}})=0$.

{\bf Case 3:} the sequence $(y_i)_{i \in \omega}$ is strictly descending.\\
This case is handled similar to Case 2.
\end{proof}

As an immediate consequence, we note the following.

\begin{theorem} \label{thm:idempow6}
Let $\eta \in \operatorname{LAut}(FI(P))$ induce the identity map on $FI(P)/Z$. Then for every infinite set
$S\subseteq P$ there exists some infinite set $Y\subseteq S$ such that
\[
\eta(e_{x})\eta(e_{Y}) = \eta(e_{Y})\eta(e_{x})=\eta(e_{x})
\]
for all $x \in Y$.
\end{theorem}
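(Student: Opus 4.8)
\textbf{Proof proposal for Theorem \ref{thm:idempow6}.}
The plan is to read off the statement as a direct computation from Lemma \ref{lem:idempow5}, so essentially no new work is needed. Given the infinite set $S\subseteq P$, apply Lemma \ref{lem:idempow5} to obtain an infinite subset $Y\subseteq S$ with
\[
\eta(e_{x})\eta(e_{Y\setminus\{x\}}) = \eta(e_{Y\setminus\{x\}})\eta(e_{x}) = 0 \mbox{\quad for all } x\in Y.
\]
I claim this same $Y$ works for Theorem \ref{thm:idempow6}. Fix $x\in Y$. Since $Y$ is the disjoint union of $\{x\}$ and $Y\setminus\{x\}$, we have $e_{Y}=e_{x}+e_{Y\setminus\{x\}}$ in $FI(P)$, and $R$-linearity of $\eta$ gives $\eta(e_{Y})=\eta(e_{x})+\eta(e_{Y\setminus\{x\}})$.

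Now I would compute directly, using that $\eta(e_{x})$ is idempotent (it equals $\varphi(e_{x})$ for some $\varphi\in\operatorname{Aut}(FI(P))$, or invoke Proposition \ref{prop:triviallautoproperties}(b)):
\[
\eta(e_{x})\eta(e_{Y}) = \eta(e_{x})\eta(e_{x}) + \eta(e_{x})\eta(e_{Y\setminus\{x\}}) = \eta(e_{x}) + 0 = \eta(e_{x}),
\]
and symmetrically
\[
\eta(e_{Y})\eta(e_{x}) = \eta(e_{x})\eta(e_{x}) + \eta(e_{Y\setminus\{x\}})\eta(e_{x}) = \eta(e_{x}) + 0 = \eta(e_{x}).
\]
Since $x\in Y$ was arbitrary, this proves the theorem.

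There is no real obstacle here: all the substance lives in Lemma \ref{lem:idempow5} (and ultimately in Lemma \ref{lem:idempow1} and the chain-condition argument of Lemma \ref{lem:idempow4}). The only things to be careful about are that $\eta$ is merely $R$-linear rather than multiplicative, so the decomposition $\eta(e_{Y})=\eta(e_{x})+\eta(e_{Y\setminus\{x\}})$ must be justified by additivity rather than by applying a single automorphism $\varphi$ to the whole product, and that one should cite the idempotence of $\eta(e_{x})$ explicitly. Everything else is formal.
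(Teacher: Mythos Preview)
Your proof is correct and is essentially identical to the paper's own argument: choose $Y$ via Lemma \ref{lem:idempow5}, split $\eta(e_{Y})=\eta(e_{x})+\eta(e_{Y\setminus\{x\}})$ by additivity, and use idempotence of $\eta(e_{x})$ together with the vanishing cross-terms. Your explicit justification of the idempotence and of additivity is, if anything, slightly more careful than the paper's presentation.
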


\begin{proof}
Choosing $Y$ as in Lemma \ref{lem:idempow5}, we have
\begin{eqnarray*}
\eta(e_{x})\eta(e_{Y})&=&\eta(e_{x})\eta(e_x+e_{Y\setminus \{x\}})=\eta(e_{x})(\eta(e_x)+\eta(e_{Y\setminus \{x\}}))\\
&=& \eta(e_{x})^2 +\eta(e_x)\eta(e_{Y\setminus \{x\}})=\eta(e_{x})^2=\eta(e_{x}),
\end{eqnarray*}
and $\eta(e_{Y})\eta(e_{x})=\eta(e_{x})$ follows similarly.
\end{proof}

We will next introduce an element $\beta\in I(P)$, in terms of $\eta$, crucial
to splitting off an inner automorphism $\psi_{\beta}$ of $\eta$. Of course, it
is essential to prove that $\beta\in FI(P)$. This requires quite some work
even if $\eta$ is an automorphism, cf. \cite{Khrip}. We have to develop some new ideas
to obtain the same result for the \textbf{local} automorphism $\eta$.

\begin{lemma} \label{lem:beta2}
Let $\eta \in \operatorname{LAut}(FI(P))$ induce the identity map on $FI(P)/Z$. Then
\[
\beta =\sum_{x\in P} e_{x}\eta (e_{x})
\]
defines an element $\beta \in FI(P)$.
\end{lemma}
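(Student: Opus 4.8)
The plan is to show that $\beta = \sum_{x\in P} e_x\eta(e_x)$ lies in $FI(P)$, i.e., that for every pair $u < v$ in $P$, only finitely many coefficients $\beta_{st}$ with $u \le s < t \le v$ are nonzero. Write $\eta(e_x) = e_x + \sum_{y<z}\alpha^{(x)}_{yz}e_{yz}$, which is legitimate since $\eta$ induces the identity on $FI(P)/Z$. Then $e_x\eta(e_x) = e_x\bigl(e_x + \sum_{y<z}\alpha^{(x)}_{yz}e_{yz}\bigr) = e_x + \sum_{x \le z,\, x<z}\alpha^{(x)}_{xz}e_{xz}$, because $e_x e_{yz} = e_{xz}$ only when $y=x$. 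Moreover, by Corollary \ref{cor:idempow2}(a) applied to the primitive idempotent $\eta(e_x)$, the coefficient $\alpha^{(x)}_{xz}$ equals $\alpha^{(x)}_{xx}\alpha^{(x)}_{xz} = \alpha^{(x)}_{xz}$ (no new information there), but the key structural fact is that $\alpha^{(x)}_{st}\ne 0$ forces $s \le x \le t$. Hence in the sum $\beta = \sum_x e_x\eta(e_x)$, a term at coordinate $e_{st}$ with $s \le t$ can only come from the single index $x = s$, so $\beta_{st} = \alpha^{(s)}_{st}$, and the diagonal coefficients are $\beta_{ss}=1$. Thus $\beta$ is well-defined as a formal sum with $\beta - e \in Z(FI(P))$; the real content is the finitary condition.

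Next I would fix $u < v$ and suppose, for contradiction, that the set $T = \{(s,t): u \le s < t \le v,\ \beta_{st}\ne 0\}$ is infinite. Since $\beta_{st} = \alpha^{(s)}_{st} = (\eta(e_s))_{st}$, infiniteness of $T$ forces the set $S = \{s : u \le s < t \le v \text{ for some } t \text{ with }(\eta(e_s))_{st}\ne 0\}$ to be infinite (each $s$ in $S$ contributes, for a fixed $s$, only finitely many $t$ because $\eta(e_s)\in FI(P)$). Now apply Theorem \ref{thm:idempow6} to the infinite set $S$: there is an infinite subset $Y\subseteq S$ such that $\eta(e_x)\eta(e_Y) = \eta(e_Y)\eta(e_x) = \eta(e_x)$ for all $x\in Y$, and by Lemma \ref{lem:idempow5} also $\eta(e_x)\eta(e_{Y\setminus\{x\}}) = \eta(e_{Y\setminus\{x\}})\eta(e_x) = 0$. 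The idea is that $\eta(e_Y)$ is an idempotent lying in $e_Y + Z(FI(P))$, so by Lemma \ref{lem:idempow1} its nonzero coefficients $(\eta(e_Y))_{st}$ require $s \le x \le t$ for some $x\in Y$; but for $x\in Y$ the relation $\eta(e_x)\eta(e_Y)=\eta(e_x)$ combined with orthogonality pins down these coefficients in terms of the $(\eta(e_x))_{st}$, and since each $(\eta(e_x))$ with $x$ lying in $[u,v]$ already has infinitely many relevant $x$'s "stacked" in the interval $[u,v]$, the element $\eta(e_Y)$ fails the finitary condition at the pair $(u,v)$ — contradicting $\eta(e_Y)\in FI(P)$.

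Making that last step precise is where I expect the main obstacle to be. The delicate point is to convert "infinitely many distinct $s\in Y\cap[u,v]$ each have a nonzero super-diagonal coefficient inside $[u,v]$" into "$\eta(e_Y)$ has infinitely many nonzero coefficients in the interval $[u,v]$." One clean route: since $\eta$ is a \emph{local} automorphism, for the single element $a = e_Y$ there is $\varphi\in\operatorname{Aut}(FI(P))$ with $\eta(e_Y)=\varphi(e_Y)$, and $\varphi(e_Y)$ is an idempotent in $e_Y+Z$; by Corollary \ref{cor:idempow2}(a) its coefficient at $(s,t)$, for $s,t$ such that exactly one $x\in Y$ satisfies $s\le x\le t$, is the product of the $(s,x)$- and $(x,t)$-entries of $\varphi(e_x)=\eta(e_x)$ — wait, one must instead use that $\eta(e_x)\eta(e_Y)=\eta(e_x)$ to identify $(\eta(e_Y))_{xt}$ with $(\eta(e_x))_{xt}$ for $t\ge x$, and dually $(\eta(e_Y))_{sx}$ with $(\eta(e_x))_{sx}$ for $s\le x$. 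Then for each $s\in Y$ with some nonzero $(\eta(e_s))_{st}$, $u\le s<t\le v$, we get $(\eta(e_Y))_{st}\ne 0$, and these coordinates are pairwise distinct as $s$ ranges over the infinite set; hence $\{(s,t): u\le s<t\le v,\ (\eta(e_Y))_{st}\ne 0\}$ is infinite, contradicting $\eta(e_Y)\in FI(P)$. Therefore $T$ is finite for every $u<v$, which is exactly the finitary condition, and $\beta\in FI(P)$.
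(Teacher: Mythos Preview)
Your overall strategy matches the paper's: reduce to an infinite set of first coordinates, invoke Theorem~\ref{thm:idempow6} to obtain an infinite $Y$ with $\eta(e_x)\eta(e_Y)=\eta(e_x)$ for $x\in Y$, and then argue that $\eta(e_Y)$ violates the finitary condition on $[u,v]$. The gap is in your last step, where you assert that $\eta(e_x)\eta(e_Y)=\eta(e_x)$ lets you ``identify $(\eta(e_Y))_{xt}$ with $(\eta(e_x))_{xt}$''. Expanding the $(x,t)$-coordinate gives
\[
(\eta(e_x))_{xt}=\sum_{x\le z\le t}(\eta(e_x))_{xz}(\eta(e_Y))_{zt}
=(\eta(e_Y))_{xt}+(\eta(e_x))_{xt}(\eta(e_Y))_{t}+\sum_{x<z<t}(\eta(e_x))_{xz}(\eta(e_Y))_{zt},
\]
so the identification holds only if both the endpoint term $(\eta(e_Y))_t$ and all middle terms $(\eta(e_x))_{xz}$ for $x<z<t$ vanish. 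Neither is guaranteed by your setup: $t$ might lie in $Y$, and $(\eta(e_x))_{xz}$ can be nonzero for intermediate $z$.

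The paper supplies exactly these two missing ingredients. First, for each $u$ in the set of first coordinates it passes to the set $T_u$ of \emph{minimal} elements $v$ with $(\eta(e_u))_{uv}\ne 0$; minimality forces $(\eta(e_u))_{uz}=0$ for all $u<z<v$, killing the middle sum. Second, it arranges that the chosen second coordinate $y$ lies outside the index set (the paper's $I$ or $J$), so that $(\eta(e_I))_y=0$. Achieving both simultaneously requires a case split: either some fixed $y$ lies in $T_u$ for infinitely many $u$ (then take $I=\{u:y\in T_u\}$, noting $y\notin I$), or not, in which case one recursively builds a sequence $(x_i,y_i)$ with $y_i\in T_{x_i}$ and all $x_i,y_i$ distinct. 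Your argument collapses once you try to push the computation through without these refinements.
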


\begin{proof}
Evidently, $\beta_{xy} =(\eta (e_{x}))_{xy}$ for all $x<y \in P$ by definition, and $\beta$
is a well-defined element of the incidence space $I(P)$. We need to show $\beta \in FI(P)$.

Assume $\beta \notin FI(P)$. Then there exist $a<b\in P$ for which the set
\[
W=\{(u,v): a\leq u<v\leq b,\beta_{uv}\neq 0\}
\]
is infinite. Let
\[
U=\{u\in P:\exists\, v\in P \mbox{ with } (u,v)\in W\},
\]
and for all $u\in U$ let
\[
S_u=\{v\in P:(u,v)\in W\}\neq \emptyset.
\]

If $S_u$ is infinite for some $u \in U$, then $\beta_{uv}=(\eta (e_{u}))_{uv}\neq 0$
for all $a\le v\le b$ with $v\in S_u$, a contradiction to $\eta (e_{u})\in FI(P)$.
Thus, $S_{u}\neq \emptyset$ is a finite set for all $u \in U$, and
\[
T_u=\{v\in S_u:v \mbox{ is minimal in } S_u\} \neq \emptyset.
\]
For $v\in T_u$ holds $\beta_{uv}=(\eta (e_{u}))_{uv}\neq 0$ but $(\eta (e_{u}))_{ut}=0$
for all $u<t<v$.

If the set $U$ is finite, then there exists some $u\in U$ with $S_u$ infinite, contradiction.
Thus $U$ is infinite, and we will distinguish between the following two cases.

{\bf Case 1:} $I=\{u\in U:y\in T_u\}=\{u\in U:(\eta (e_{u}))_{uy}\neq 0\}$ is infinite for some $y\in {\bigcup}_{u\in U}T_u$.\\
We have $y\notin I$ by definition, and with Theorem \ref{thm:idempow6}
we may assume
\[
\eta (e_{x})=\eta (e_{x})\eta (e_{I})
\]
for all $x\in I$, replacing $I$ by a suitable infinite subset of $I$ if need be.
We compute and compare the $e_{xy}$-coordinates of the terms in the last equation:
\begin{eqnarray*}
0 &\neq& (\eta (e_{x}))_{xy}=\sum_{x\leq t\leq y}(\eta(e_{x}))_{xt}(\eta (e_{I}))_{ty}\\
&=& (\eta (e_{x}))_{x} (\eta (e_{I}))_{xy}+(\eta (e_{x}))_{xy}(\eta (e_{I}))_{y}+\sum_{x< t< y}(\eta(e_{x}))_{xt}(\eta (e_{I}))_{ty}
\end{eqnarray*}
As $\eta$ induces the identity map on $FI(P)/Z$, we have $(\eta (e_{x}))_{x}=1$ and
$(\eta (e_{I}))_{y}=(e_{I})_{y}=0$ since $y\notin I$. Furthermore, as $y \in T_x$, we have $(\eta(e_{x}))_{xt}=0$ whenever $x<t<y$.
We obtain
\[
0\neq (\eta (e_{x}))_{xy}=(\eta (e_I))_{xy}
\]
for all $a\le x<y \le b$ with $x\in I$, a contradiction to $\eta (e_I)\in FI(P)$.

{\bf Case 2:} $\{u\in U:v\in T_u\}$ is finite for all $v\in {\bigcup}_{u\in U}T_u$.\\
We construct recursively distinct elements $x_i, y_i$ $(i \in \omega)$ with $y_i \in T_{x_i}$.
Start with arbitrary elements $x_0 \in U$ and $y_0 \in T_{x_0}$.
Given elements  $x_j, y_j$ $(j \le i)$, choose
\[
x_{i+1} \in U \setminus \left( \{x_j,y_j : j\le i\} \cup \bigcup_{j\le i} \{u\in U:x_j\in T_u\} \cup \bigcup_{j\le i} \{u\in U:y_j\in T_u\} \right)
\]
and $y_{i+1} \in T_{x_{i+1}}$. We have $(x_i,y_i)\in W$ by definition. Let $J=\{x_{i}:i<\omega \}$.
With Theorem \ref{thm:idempow6} we may assume
\[
\eta (e_{x_i})=\eta (e_{x_i})\eta (e_{J})
\]
for all $i\in \omega$, replacing $J$ by a suitable infinite subset of $J$ if need be.
We compute and compare the $e_{x_iy_i}$-coordinates of the terms in the last equation:
\begin{eqnarray*}
0 &\neq& (\eta (e_{x_{i}}))_{x_{i} y_{i}}=\sum_{x_{i}\leq t\leq y_{i}} (\eta (e_{x_{i}}))_{x_{i}t}\,(\eta (e_{J}))_{ty_{i}}\\
&=& (\eta (e_{x_{i}}))_{x_{i}}(\eta (e_{J}))_{x_iy_{i}}+(\eta (e_{x_{i}}))_{x_{i}y_i}(\eta (e_{J}))_{y_i}
+\sum_{x_{i}< t< y_{i}} (\eta (e_{x_{i}}))_{x_{i}t}\,(\eta (e_{J}))_{ty_{i}}
\end{eqnarray*}
As $\eta$ induces the identity map on $FI(P)/Z$, we have $(\eta (e_{x_i}))_{x_i}=1$ and
$(\eta (e_{J}))_{y_i}=(e_{J})_{y_i}=0$ since $y_i\notin J$. Furthermore, as $y_i \in T_{x_i}$, we have $(\eta(e_{x_i}))_{x_it}=0$ whenever $x_i<t<y_i$.
We obtain
\[
0\neq (\eta (e_{x_{i}}))_{x_{i} y_{i}}=(\eta (e_{J}))_{x_iy_{i}}
\]
for infinitely many distinct pairs $(x_i,y_i)\in W$ with $a\le x_i<y_i \le b$, a contradiction to $\eta (e_J)\in FI(P)$.
\end{proof}

We are all set for the main result of this section.

\begin{theorem}
Let $\eta \in \operatorname{LAut}(FI(P))$ induce the identity map on $FI(P)/Z$.
Then there exists a unit $\beta \in FI(P)$ with $\eta (e_{x})=\psi_\beta (e_{x})$ for all $x\in P$.
\end{theorem}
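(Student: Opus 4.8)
The plan is to show that $\beta=\sum_{x\in P}e_{x}\eta(e_{x})$ -- exactly the element of $FI(P)$ produced by Lemma~\ref{lem:beta2} -- is a unit which conjugates $e_{x}$ to $\eta(e_{x})$ for every $x\in P$. Throughout write $f_{x}=\eta(e_{x})$; recall these are idempotents ($\eta$ preserves idempotents by Proposition~\ref{prop:triviallautoproperties}) and are pairwise orthogonal, $f_{w}f_{x}=\eta(e_{w})\eta(e_{x})=0$ for $w\neq x$, by Lemma~\ref{lem:idempow3}.

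First I would check that $\beta$ is a unit. Since $\eta$ induces the identity on $FI(P)/Z$, each $f_{x}$ lies in $e_{x}+Z$, so $(f_{x})_{yy}=\delta_{xy}$ for all $y\in P$. In the expansion $\beta=\sum_{w}e_{w}f_{w}$ the summand $e_{w}f_{w}$ has nonzero coefficients only in ``row'' $w$, so the $e_{uv}$-coefficient of $\beta$ (for $u\le v$) is $(f_{u})_{uv}$; in particular $\beta_{xx}=(f_{x})_{xx}=1$ for all $x$. As every diagonal coefficient of $\beta$ is a unit of $R$, the unit criterion for $FI(P)$ gives $\beta^{-1}\in FI(P)$.

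The core of the proof is the identity $\beta f_{x}=e_{x}\beta$ for all $x\in P$; granting it, left multiplication by $\beta^{-1}$ yields $\eta(e_{x})=f_{x}=\beta^{-1}e_{x}\beta=\psi_{\beta}(e_{x})$. I would establish it by computing both sides through $\beta=\sum_{w}e_{w}f_{w}$ and showing each equals $e_{x}f_{x}$: on one hand $\beta f_{x}=\sum_{w}e_{w}(f_{w}f_{x})=e_{x}f_{x}$, using orthogonality of $\{f_{w}\}$ and $f_{x}^{2}=f_{x}$; on the other hand $e_{x}\beta=\sum_{w}(e_{x}e_{w})f_{w}=e_{x}f_{x}$, using $e_{x}e_{w}=\delta_{xw}e_{x}$ for the standard primitive idempotents. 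The one bookkeeping point is that distributing multiplication through the infinite sum $\sum_{w}e_{w}f_{w}$ is legitimate here; this is immediate because, for each fixed $u$, only the single summand $w=u$ contributes to any coefficient in row $u$, so every coefficient in sight is a finite sum.

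I do not expect a serious obstacle in this argument: the genuinely delicate fact -- that $\beta$ actually lies in $FI(P)$ and not merely in the incidence space $I(P)$ -- has already been settled in Lemma~\ref{lem:beta2}, and the orthogonality of the family $\{\eta(e_{x})\}$ is Lemma~\ref{lem:idempow3}, so what remains is the short formal computation above. (If the convention in force is $\psi_{\beta}(a)=\beta a\beta^{-1}$ rather than $\beta^{-1}a\beta$, one runs the mirror computation with $\gamma=\sum_{x}\eta(e_{x})e_{x}$: the same two orthogonality relations give $\gamma e_{x}=f_{x}e_{x}=f_{x}\gamma$ and $\beta\gamma=\sum_{x}e_{x}f_{x}e_{x}=\sum_{x}e_{x}=1$, whence $\gamma=\beta^{-1}$ and $\eta(e_{x})=\gamma e_{x}\gamma^{-1}$.)
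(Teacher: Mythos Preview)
Your proposal is correct and follows essentially the same approach as the paper: the same element $\beta=\sum_{x}e_{x}\eta(e_{x})$ is shown to be a unit via its diagonal entries, and the identity $\beta\,\eta(e_{x})=e_{x}\beta$ is obtained from the orthogonality of the $\eta(e_{x})$ (Lemma~\ref{lem:idempow3}) together with $e_{x}e_{w}=\delta_{xw}e_{x}$, exactly as in the paper. Your extra remark justifying the distribution of multiplication over the infinite sum is a welcome clarification but not a departure in method.
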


\begin{proof}
With Lemma \ref{lem:beta2}, we can choose $\beta =\sum_{x\in P} e_{x}\eta (e_{x}) \in FI(P)$.
Note that
\[
\beta_{x} =(\eta (e_{x}))_{x}=1
\]
is a unit of $R$ for all $x\in P$, which makes $\beta$ a unit of $FI(P)$, cf. Section \ref{sec:fipsdef}.
Furthermore, with Lemma \ref{lem:idempow3} we have
\begin{eqnarray*}
\beta \eta(e_{x}) = \sum_{y\in P} e_{y}\eta (e_{y})\eta(e_{x}) &=& e_{x}\eta (e_{x})^2 = e_{x}\eta (e_{x})\\
&=& e_{x}^2 \eta (e_{x})= e_x \sum_{y\in P} e_y\eta (e_{y}) =e_{x} \beta
\end{eqnarray*}
for all $x \in P$. Hence, $\eta(e_{x}) = \beta^{-1}e_{x} \beta = \psi_\beta (e_{x})$.
\end{proof}

In particular, replacing $\eta$ with $\psi_\beta^{-1} \circ \eta$, we may, without loss of
generality, assume that the local automorphism $\eta$ induces the identity map on $FI(P)/Z$
with  $\eta (e_{x})=e_{x}$ for all $x\in P$.

\subsection{Step 3: Splitting off $M_\sigma$} \mbox{}\medskip

Splitting off a suitable Schur multiplication $M_\sigma$ will be a refreshingly simple task.

\begin{theorem} \label{thm:step3}
Let $\eta \in \operatorname{LAut}(FI(P))$ with $\eta(e_{x}) = e_{x}$ for all $x \in P$. Then the following holds.
\begin{itemize}
\item[(a)] For all $x\le y \in P$, $\eta( e_{xy}) = \sigma_{xy} e_{xy}$ with $\sigma_{xy} \in R$.
\item[(b)] For all $x\le y \in P$, $\sigma_{xy}$ is a unit of $R$. Moreover, $\sigma _{xx}=1$ for all $x\in P$.
\item[(c)] For all $x\leq y\leq z \in P$, $\sigma _{xz}=\sigma _{xy}\sigma _{yz}$.
\end{itemize}
In particular, $\sigma$ induces a Schur multiplication $M_\sigma$ with $\eta (e_{xy})=M_\sigma (e_{xy})$ for all $x\le y\in P$.
\end{theorem}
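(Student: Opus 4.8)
The plan is to prove (a), (b), (c) in turn, using throughout that $\eta$ preserves idempotents (Proposition~\ref{prop:triviallautoproperties}) and that $\eta(Z)\subseteq Z$: indeed every $\varphi\in\operatorname{Aut}(FI(P))$ fixes $Z$ (Proposition~\ref{prop:propZ}(c)), hence $\eta(e_{xy})\in Z$ for all $x<y$. The central tool will be Corollary~\ref{cor:idempow2}(a), which constrains idempotents lying in a coset $e_w+Z$; the real content of the proof is choosing the right idempotents to feed into it.

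For (a), fix $x<y$ in $P$. I would first observe that $p=e_x+e_{xy}$ and $q=e_y+e_{xy}$ are idempotents: among the summands of $p$ the only nonzero products are $e_xe_x=e_x$ and $e_xe_{xy}=e_{xy}$, so $p^2=p$, and likewise $q^2=q$ from $e_ye_y=e_y$, $e_{xy}e_y=e_{xy}$. Hence $\eta(p)=e_x+\eta(e_{xy})$ and $\eta(q)=e_y+\eta(e_{xy})$ are idempotents lying in $e_x+Z$ and $e_y+Z$ respectively. Writing $\eta(e_{xy})=\sum_{u<v}c_{uv}e_{uv}$, Corollary~\ref{cor:idempow2}(a) applied to $\eta(p)$ gives $c_{uv}=c_{ux}c_{xv}$ with support confined to $\{(u,v):u\le x\le v\}$, and applied to $\eta(q)$ confines the support further to $\{(u,v):u\le y\le v\}$. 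Intersecting: a pair $(u,x)$ with $u<x$ would require $u\le y\le x$, contradicting $x<y$, so $c_{ux}=0$ and therefore $c_{uv}=c_{ux}c_{xv}=0$ whenever $u<x$; the support now lies in $\{(x,v):y\le v\}$, and for $v>y$ the pair $(y,v)$ cannot belong to it (wrong first coordinate), whence $c_{xv}=c_{xy}c_{yv}=0$. Only $c_{xy}$ survives, so $\eta(e_{xy})=\sigma_{xy}e_{xy}$ with $\sigma_{xy}:=c_{xy}$, and for $x=y$ the hypothesis $\eta(e_x)=e_x$ forces $\sigma_{xx}=1$. For (b), with $\sigma_{xx}=1$ already in hand, I would for $x<y$ invoke the local automorphism property to pick $\varphi\in\operatorname{Aut}(FI(P))$ with $\varphi(e_{xy})=\eta(e_{xy})=\sigma_{xy}e_{xy}$; applying $\varphi^{-1}$ gives $e_{xy}=\sigma_{xy}\varphi^{-1}(e_{xy})$ in $FI(P)$, and comparing $(x,y)$-coordinates yields $1=\sigma_{xy}\cdot(\varphi^{-1}(e_{xy}))_{xy}$, so $\sigma_{xy}$ is a unit of $R$.

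For (c), the degenerate cases (two of $x,y,z$ equal) reduce at once to $\sigma_{xx}=1$, so take $x<y<z$. The idea is to use the idempotent $d=e_y+e_{xy}+e_{yz}+e_{xz}$: among its summands the only nonzero products are $e_ye_y=e_y$, $e_ye_{yz}=e_{yz}$, $e_{xy}e_y=e_{xy}$ and $e_{xy}e_{yz}=e_{xz}$, so $d^2=d$, and $d\in e_y+Z$. Then $\eta(d)=e_y+\sigma_{xy}e_{xy}+\sigma_{yz}e_{yz}+\sigma_{xz}e_{xz}$ is again an idempotent in $e_y+Z$, and Corollary~\ref{cor:idempow2}(a) with the distinguished element taken to be $y$ (using $x\le y\le z$) gives $(\eta(d))_{xz}=(\eta(d))_{xy}\,(\eta(d))_{yz}$, that is $\sigma_{xz}=\sigma_{xy}\sigma_{yz}$. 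With (b) and (c) the family $\sigma=(\sigma_{xy})$ satisfies the unit and cocycle conditions in the definition of a Schur multiplication, so $M_\sigma\in\operatorname{Mult}(FI(P))$ is well defined and $M_\sigma(e_{xy})=\sigma_{xy}e_{xy}=\eta(e_{xy})$ for all $x\le y$.

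The one genuinely fiddly point is the bookkeeping in (a): tracking which pairs $(u,v)$ can support $\eta(e_{xy})$ as the two idempotent constraints from $\eta(p)$ and $\eta(q)$ are intersected. Everything else is a short computation. What makes the whole argument go through with only a $1$-local hypothesis, never invoking multiplicativity of $\eta$ or a $2$-local assumption, is that the off-diagonal generator $e_{xy}$ already interacts with the diagonal idempotents $e_x,e_y$ — and, for the cocycle, with the single idempotent $d$ — strongly enough that idempotent-preservation alone determines $\eta$ on it.
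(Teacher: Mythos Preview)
Your proof is correct and follows essentially the same route as the paper: the same idempotents $e_x+e_{xy}$, $e_y+e_{xy}$ drive part~(a), the same local-automorphism witness gives the unit property in~(b), and the same idempotent $e_y+e_{xy}+e_{yz}+e_{xz}$ yields the cocycle identity in~(c). The only cosmetic difference is that for~(a) and~(c) you invoke Corollary~\ref{cor:idempow2}(a) as a packaged tool, whereas the paper expands the idempotent equations directly (using in~(a) that $\eta(e_{xy})^2=0$ via a witnessing automorphism); your route is arguably a bit cleaner here.
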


\begin{proof}
For (a), we have $\eta(e_{xx}) = e_{xx}\in Re_{xx}$. Thus, we may assume $x<y$.

Applying $\eta$ to the idempotent $e_{x} + e_{xy}$ gives
\begin{eqnarray*}
e_{x} + \eta(e_{xy}) &=& \eta(e_{x}) + \eta(e_{xy}) =\eta(e_{x} + e_{xy}) = \eta(e_{x} + e_{xy})^2 = (e_{x} + \eta(e_{xy}))^2\\
&=& e_{x}+\eta(e_{xy})^2+e_{x}\eta(e_{xy})+\eta(e_{xy})e_{x}.
\end{eqnarray*}
Moreover, choosing some $\varphi \in \operatorname{Aut}(FI(P))$ with $\eta(e_{xy})=\varphi(e_{xy})$, we have
$\eta(e_{xy})^2=\varphi(e_{xy})^2=\varphi(e_{xy}^2)=\varphi(0)=0$, and the last equation simplifies to
\begin{align} \label{eq:step3a}
\eta(e_{xy})=e_{x}\eta(e_{xy})+\eta(e_{xy})e_{x}.
\end{align}
Similarly, from the idempotent $e_{y} + e_{xy}$ we infer
\begin{align} \label{eq:step3b}
\eta(e_{xy})=e_{y}\eta(e_{xy})+\eta(e_{xy})e_{y}.
\end{align}
Comparing coordinates on both sides of Equation \eqref{eq:step3a}, we have $(\eta(e_{xy}))_{uv}=0$ for $u\le v \in P$ unless either
$u=x$ or $v=x$. Similarly, \eqref{eq:step3b} gives $(\eta(e_{xy}))_{uv}=0$ for $u\le v \in P$ unless either $u=y$ or $v=y$. Thus,
$(\eta(e_{xy}))_{xy}$ is the only possible nontrivial entry of $\eta(e_{xy})$, and $\eta(e_{xy})\in R e_{xy}$.

For (b), let $\eta( e_{xy}) = \sigma_{xy} e_{xy}$ and choose some $\varphi \in \operatorname{Aut}(FI(P))$ with
$\eta(e_{xy})=\varphi(e_{xy})$. With $a=\varphi^{-1}(e_{xy})\in FI(P)$ we have
\[
\varphi(e_{xy})=\eta(e_{xy})=\sigma_{xy} e_{xy}=\sigma_{xy} \varphi(a)= \varphi(\sigma_{xy}a).
\]
Hence $e_{xy}=\sigma_{xy}a$, and looking at the $e_{xy}$-coordinates of this equation gives $1=\sigma_{xy}a_{xy}$.
Thus, $\sigma_{xy}$ is a unit, with $\sigma _{xx}=1$ evident from $\eta(e_{xx}) = e_{xx}$.

For (c), with $\sigma_{yy}=1$ the statement trivially holds if $x=y$ or $y=z$, and we need to check
$\sigma _{xz}=\sigma _{xy}\sigma _{yz}$ only for the case $x<y<z$. For that purpose, we will
investigate the idempotent element $a=e_{y}+e_{xy}+e_{yz}+e_{xz}$. First, note that
\[
a=e_{y}+e_{xy}+e_{yz}+e_{xz}=(e_y+e_{xy})(e_y+e_{yz})=a e_y a
\]
confirms $a$ as an idempotent, cf. Corollary \ref{cor:idempow2b}. We have
\[
\eta(a) = \eta(e_{y})+\eta(e_{xy})+\eta(e_{yz})+\eta(e_{xz})
= e_y+\sigma_{xy}e_{xy}+\sigma_{yz}e_{yz}+\sigma_{xz}e_{xz},
\]
and after squaring
\begin{eqnarray*}
\eta(a) = \eta(a)^2 &=& (e_y+\sigma_{xy}e_{xy}+\sigma_{yz}e_{yz}+\sigma_{xz}e_{xz})^2\\
&=& e_y+\sigma_{xy}e_{xy}+\sigma_{yz}e_{yz}+\sigma_{xy}\sigma_{yz}e_{xz}.
\end{eqnarray*}
Comparing these last two equations, we infer $\sigma _{xz}=\sigma _{xy}\sigma _{yz}$.
\end{proof}

In particular, replacing $\eta$ with $M_\sigma^{-1} \circ \eta$, we may, without loss of
generality, assume that the local automorphism $\eta$ induces the identity map on $FI(P)/Z$
with  $\eta (e_{xy})=e_{xy}$ for all $x\le y\in P$. It remains to show $\eta =\operatorname{id}$.

\subsection{Step 4: Finish} \mbox{}\medskip

It will be convenient to talk about diagonal elements.

\begin{definition}
We call $d\in FI(P)$ \emph{diagonal} if $d_{xy}=0$ for all $x<y \in P$.
\end{definition}

The following lemma provides a first very powerful boost towards proving $\eta =\operatorname{id}$.

\begin{lemma} \label{lem:step4a}
Let $\eta \in \operatorname{LAut}(FI(P))$ induce the identity map on $FI(P)/Z$ with $\eta(e_{x}) = e_{x}$ for all $x \in P$.
Then $\eta(d)=d$ for all diagonal elements $d={\sum}_{x\in P} d_{x}e_{x}$.
\end{lemma}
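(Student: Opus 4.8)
Fix a diagonal element $d = \sum_{x\in P} d_x e_x$ and a specific coordinate $y\in P$; it suffices to show that $(\eta(d))_y = d_y$ and that $\eta(d)$ has no off-diagonal entries, i.e. $(\eta(d))_{uv}=0$ for all $u<v$. Since $\eta$ already induces the identity on $FI(P)/Z$, we know $(\eta(d))_x = d_x$ for \emph{all} $x$ automatically, so in fact the only thing to prove is that $\eta(d)$ is diagonal. The natural route is to exploit that $\eta$ is a local automorphism together with the normalization $\eta(e_x)=e_x$: for any single element $a$ (in particular $a=d$, and also $a = d+e_{xy}$ for chosen $x<y$) there is $\varphi = \psi_f\circ M_\sigma\circ\widehat\rho \in \operatorname{Aut}(FI(P))$ with $\eta(a)=\varphi(a)$, and the constraints $\eta(e_x)=e_x$ for all $x$ will force $\widehat\rho = \operatorname{id}$, so that $\varphi = \psi_f\circ M_\sigma$ induces the identity on $FI(P)/Z$ and sends diagonal elements to diagonal elements.

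**Key steps.** First I would pin down $\rho$. Given $d$, pick $\varphi_d\in\operatorname{Aut}(FI(P))$ with $\eta(d)=\varphi_d(d)$; but that alone does not constrain $\rho_d$ since $d$ may have repeated coordinates. Instead, the cleaner move is: for a \emph{fixed} $x<y$, consider $a = e_x + e_{xy}$, which is an idempotent, or better yet use the already-available fact (Theorem~\ref{thm:step3}, if we are willing to invoke the full normalization $\eta(e_{xy})=e_{xy}$) — but Lemma~\ref{lem:step4a} is stated only under the weaker hypothesis $\eta(e_x)=e_x$, so I should not assume $\eta(e_{xy})=e_{xy}$. So I would argue directly: choose $\varphi\in\operatorname{Aut}(FI(P))$, $\varphi=\psi_f\circ M_\sigma\circ\widehat\rho$, with $\eta(d + \lambda_1 e_{x_1}+\cdots)=\varphi(d+\cdots)$ suitably, forcing $\rho(x)=x$ on enough points; since $\eta(e_x)=e_x$ for all $x$ and $\eta$ is additive, applying $\eta$ to $d+e_x$ and to $e_x$ and subtracting gives $\eta(d+e_x)-\eta(e_x)=\eta(d)$ regardless, so I must instead compare $\eta(d+e_x)=\varphi'(d+e_x)$ with the constraint $\varphi'(e_x)$ being a primitive idempotent of the form $e_{\rho'(x)}+(\text{off-diagonal})$, matched against the forced value. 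Concretely: choose $d$ with all coordinates $d_x$ distinct and nonzero — that is the key trick, as in Theorem~\ref{thm:surjectivelautisaut}. For such $d$, writing $\eta(d)=\varphi(d)$ with $\varphi=\psi_f\circ M_\sigma\circ\widehat\rho$, the diagonal part of $\varphi(d)$ is $\sum_x d_x e_{\rho(x)}$, while $\eta$ induces the identity mod $Z$, forcing $\sum_x d_x e_{\rho(x)} = \sum_x d_x e_x$; distinctness of the $d_x$ gives $\rho=\operatorname{id}$. Hence $\varphi = \psi_f\circ M_\sigma$ induces the identity mod $Z$, and crucially $\varphi$ maps the diagonal subalgebra $D=\prod_x Re_x$ into itself: indeed $M_\sigma(d)=d$ on diagonal elements and $\psi_f(d) = f^{-1}df$; so $\eta(d)=\varphi(d)=\psi_f(d)$. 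Then, to kill the conjugation, observe $\psi_f(d)$ is diagonal iff $f^{-1}df$ is diagonal iff $d$ commutes with $f$ appropriately; but more simply, the diagonal part of $\psi_f(d)$ is $d$ itself (since $\psi_f$ induces $\operatorname{id}$ mod $Z$), so it remains to handle off-diagonal entries of $\psi_f(d)$. For a general diagonal $d$ I would then reduce to the distinct-coordinates case: write an arbitrary diagonal $d$ as $d = d' + d''$ with both $d',d''$ having pairwise-distinct nonzero coordinates (possible over a ring with $|R|$ large enough, mirroring the $b,c$ decomposition in Theorem~\ref{thm:surjectivelautisaut}), so by additivity $\eta(d)=\eta(d')+\eta(d'')$.

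**The main obstacle.** The real difficulty is showing $\eta(d)$ has no off-diagonal part, i.e. eliminating the inner automorphism $\psi_f$ above: for a single diagonal $d$, the $f=f_d$ and $\sigma=\sigma_d$ depend on $d$, and $\psi_{f_d}(d)$ need not equal $d$ — a diagonal matrix conjugated by a non-diagonal unit generally acquires off-diagonal entries \emph{unless} $d$ happens to be central or $f$ happens to centralize $d$. So the crux is a \emph{coordinate-comparison argument}: fix $x<y$, and show $(\eta(d))_{xy}=0$. I would do this by choosing a clever auxiliary element — e.g. apply the local-automorphism property to $d + e_{xy}$ (using Theorem~\ref{thm:step3} is not allowed here, but one can still write $\eta(d+e_{xy})=\varphi(d+e_{xy})$ for some $\varphi=\psi_f\circ M_\sigma\circ\widehat\rho$, with $\rho$ again forced to be the identity if $d$ has distinct coordinates), and then exploit that $\varphi(d+e_{xy}) = \psi_f(d) + \sigma_{xy}\psi_f(e_{xy})$ while simultaneously $\eta(d+e_{xy}) = \eta(d)+\eta(e_{xy})$, matching the $e_{xy}$-coordinate. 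Combined with the analogous identity for just $d$ (giving $\eta(d)=\psi_g(d)$ for possibly different $g$), and by varying which off-diagonal slot $e_{xy}$ we perturb, one forces all off-diagonal entries of $\eta(d)$ to vanish. The bookkeeping here — tracking how $\psi_f$ spreads the diagonal entries of $d$ into the $(x,y)$-slot and showing the contributions cancel — is the technical heart of the argument, and I would organize it by first treating the case where $x,y$ lie in a two-element chain (so only $e_x, e_y, e_{xy}$ are involved and the relevant corner of $FI(P)$ is the $2\times 2$ upper-triangular matrix algebra over $R$) and then bootstrapping to the general interval via Lemma~\ref{lem:idempow1} and Corollary~\ref{cor:idempow2}.
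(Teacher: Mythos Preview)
Your proposal has a genuine gap at exactly the point you flag as ``the main obstacle,'' and the suggested workaround does not close it.

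First, the reduction to diagonals with pairwise distinct nonzero coordinates is illegitimate in general: nothing in the hypothesis bounds $|P|$ by $|R|$. When $R=\mathbb{Z}_2$ and $|P|$ is large (finite or infinite), or when $R$ is infinite but $|P|>|R|$, no such $d$ exists, and the $d=d'+d''$ splitting idea inherits the same obstruction. So the argument forcing $\rho=\operatorname{id}$ from distinctness of the $d_x$ is unavailable. (The paper sidesteps this entirely: it never needs $\rho=\operatorname{id}$, only the much weaker fact $\widehat\rho(d^{(E)})=d^{(E)}$ as \emph{elements}, which follows immediately from $\eta$ inducing the identity on $FI(P)/Z$ and both sides being diagonal.)

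Second, and more seriously, even granting $\eta(d)=\psi_f(d)=f^{-1}df$ for some $f$ depending on $d$, you have not explained how perturbing by $e_{xy}$ kills the off-diagonal entries. Applying the local property to $d+e_{xy}$ yields $\eta(d)+\eta(e_{xy})=\psi_{f'}(d)+\sigma'_{xy}\psi_{f'}(e_{xy})$ with a \emph{different} unit $f'$, so you are comparing $\psi_f(d)$ with $\psi_{f'}(d)$ and there is no cancellation mechanism. The ``bookkeeping'' you allude to is the whole problem.

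The paper's proof uses a completely different and sharper idea. Suppose $\eta(d)=d+j$ with $j\in Z$, $j\neq 0$, and pick $u<v$ with $j_{uv}\neq 0$ and $v$ \emph{minimal} (so $j_{ut}=0$ for all $u<t<v$). Now use the normalization $\eta(e_u)=e_u$, $\eta(e_v)=e_v$ to strip off the $u$ and $v$ diagonal entries: with $d^{(E)}=\sum_{x\neq u,v}d_xe_x$ one gets $\eta(d^{(E)})=d^{(E)}+j$ as well. Writing $\eta(d^{(E)})=f^{-1}d^{(E)}f$, the $(u,v)$-entry equals $\sum_{u<t<v}(f^{-1})_{ut}d_t f_{tv}$, a sum over \emph{strictly interior} $t$ because $d^{(E)}$ vanishes at $u$ and $v$; this produces a $w$ with $u<w<v$ and $(f^{-1})_{uw}d_wf_{wv}\neq 0$. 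Multiplying the identity $d^{(E)}+j=f^{-1}d^{(E)}f$ on the right by the idempotent $f^{-1}e_wf$ and reading off the $(u,v)$-entry then forces $j_{ut}\neq 0$ for some $u<t\leq w<v$, contradicting the minimality of $v$. The two moves you are missing are precisely (i) deleting the $u,v$ entries of $d$ via $\eta(e_u)=e_u$, $\eta(e_v)=e_v$, and (ii) the minimal-$v$ descent combined with multiplication by the conjugated projector $f^{-1}e_wf$.
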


\begin{proof}
We have $\eta(d)=d+j$ for some $j\in Z$. For a contradiction, let us assume that $j\neq0$. Thus, we can choose $u<v\in P$ with $j_{uv}\neq0$.
The set
\[
W=\{t: u< t \leq v,j_{ut}\neq 0\}
\]
is finite. Without loss of generality, we may assume $W=\{v\}$, replacing $v$ by $\min W$ if need be.
Thus, $j_{uv} \neq 0$ but
\begin{align} \label{eq:step4z}
j_{ut}=0 \mbox{ for all } u<t<v.
\end{align}
Let $E=P\setminus \{u,v\}$, and set $d^{(E)}={\sum}_{x\in E}d_{x}e_{x}$. We have
\begin{align} \label{eq:step4a}
d=d_{u}e_{u}+d_{v}e_{v}+d^{(E)}.
\end{align}
Applying $\eta$ gives
\begin{align} \label{eq:step4b}
\eta (d)=\eta (d_{u}e_{u}+d_{v}e_{v}+d^{(E)})=d_{u}e_{u}+d_{v}e_{v}+\eta (d^{(E)}).
\end{align}
On the other hand, we have
\begin{align} \label{eq:step4c}
\eta(d)=d+j=d_{u}e_{u}+d_{v}e_{v}+d^{(E)}+j,
\end{align}
and we infer
\begin{align} \label{eq:step4d}
\eta(d^{(E)})=d^{(E)}+j
\end{align}
from comparing \eqref{eq:step4b} and \eqref{eq:step4c}.

Choose some $\varphi \in \operatorname{Aut}(FI(P))$ with $\eta(d^{(E)})=\varphi(d^{(E)})$.
With Theorem~\ref{thm:charaut} we have $\varphi = \psi_{f} \circ M_{\sigma} \circ \widehat \rho$.
Note that $\widehat \rho (d^{(E)})$ is a diagonal element and that diagonal elements are fixed
under Schur multiplications. Hence
\begin{align} \label{eq:step4e}
\eta(d^{(E)})=(\psi_{f} \circ M_{\sigma})(\widehat \rho(d^{(E)}))=\psi_{f} (\widehat \rho(d^{(E)})) = f^{-1}(\widehat \rho(d^{(E)}))f,
\end{align}
and
\[
\eta(d^{(E)})+Z = f^{-1}(\widehat \rho(d^{(E)}))f +Z = \widehat \rho(d^{(E)}) +Z.
\]
As $\eta$ induce the identity map on $FI(P)/Z$, we infer $\widehat \rho(d^{(E)})=d^{(E)}$.
Thus, \eqref{eq:step4e} becomes
\[
\eta(d^{(E)})=f^{-1}d^{(E)}f
\]
for some unit $f\in FI(P)$. Together with \eqref{eq:step4d} we have
\begin{align} \label{eq:step4f}
d^{(E)}+j=f^{-1}d^{(E)}f.
\end{align}

Now consider the $e_{uv}$-coordinate of the last equation. We have
\[
0\neq j_{uv}=(d^{(E)}+j)_{uv}=(f^{-1}d^{(E)}f)_{uv}=\sum_{u\leq t\leq v,t\in E}(f^{-1})_{ut}d_{t}f_{tv}.
\]
Note that in the latter summation we actually have $u<t<v$ since $u,v\notin E$. This implies
\begin{align} \label{eq:step4g}
(f^{-1})_{uw}d_{w}f_{wv} \neq0 \mbox{ for some } u<w<v.
\end{align}
With  \eqref{eq:step4f} we get
\begin{align} \label{eq:step4h}
(d^{(E)}+j)(f^{-1}e_{w}f)=(f^{-1}d^{(E)}f)(f^{-1}e_{w}f)=f^{-1}d^{(E)}e_{w}f=f^{-1}d_{w}e_{w}f.
\end{align}
Considering the $e_{uv}$-coordinate of the last equation leads to
\[
(d^{(E)}f^{-1}e_{w}f+jf^{-1}e_{w}f)_{uv}=(d_{w}f^{-1}e_{w}f)_{uv}=(f^{-1})_{uw}d_{w}f_{wv} \ne 0.
\]
Note that $(d^{(E)}f^{-1}e_{w}f)_{uv}=0$ since $u\notin E$, and we get
\[
0\neq (jf^{-1}e_{w}f)_{uv}=\sum_{u<t\leq w}j_{ut}(f^{-1})_{tw}f_{wv},
\]
where we have a strict inequality $u<t$ since $j\in Z$. It follows that $j_{ut}\neq0$ for some
$u<t\leq w<v$, contradicting \eqref{eq:step4z}.
\end{proof}

For our final theorem we need one last crucial definition.

\begin{definition}
For any $u \le v \in P$ let
\[
L_{uv}=\{ a \in FI(P): a_{xy}=0 \mbox{ for all } u\le x\le y\le v\}.
\]
\end{definition}

We summarize some of the remarkable properties of $L_{uv}$ as a separate lemma.

\begin{lemma} \label{lem:step4b}
Let $\eta \in \operatorname{LAut}(FI(P))$ induce the identity map on $FI(P)/Z$.
Then $L_{uv} \vartriangleleft FI(P)$ is a two-sided ideal for all $u\le v \in P$,
and $\eta(L_{uv}) \subseteq L_{uv}$ holds.
\end{lemma}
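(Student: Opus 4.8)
My plan is to dispatch the ideal property by a coordinate computation and then spend the real effort on the inclusion $\eta(L_{uv})\subseteq L_{uv}$. For the ideal property, the key observation is that $u\le x\le y\le v$ forces $x,y\in[u,v]$, and that then $x\le z\le y$ forces $z\in[u,v]$ as well. Hence, for $a\in L_{uv}$, $b\in FI(P)$ and $u\le x\le y\le v$, every coefficient $a_{xz}$ occurring in $(ab)_{xy}=\sum_{x\le z\le y}a_{xz}b_{zy}$ vanishes (as $u\le x\le z\le v$), so $(ab)_{xy}=0$; symmetrically $(ba)_{xy}=\sum_{x\le z\le y}b_{xz}a_{zy}=0$ since $u\le z\le y\le v$. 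As $L_{uv}$ is obviously an $R$-submodule, $L_{uv}\vartriangleleft FI(P)$ follows.

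Before turning to $\eta$, I would record how $L_{uv}$ reacts to the three canonical automorphism types of Theorem~\ref{thm:charaut}. The computation above, applied to $f^{-1}$ and to $f$, gives $\psi_f(L_{uv})=L_{uv}$ for every unit $f\in FI(P)$; $M_\sigma(L_{uv})=L_{uv}$ is clear because $M_\sigma$ only rescales coordinates by units; and $\widehat\rho(L_{uv})=L_{\rho(u)\rho(v)}$ for an order automorphism $\rho$. Consequently an automorphism $\varphi=\psi_f\circ M_\sigma\circ\widehat\rho$ satisfies $\varphi(L_{uv})\subseteq L_{uv}$ as soon as $\rho([u,v])=[u,v]$; and since any order automorphism of $P$ that preserves the interval $[u,v]$ restricts to an order automorphism of $[u,v]$, it then also fixes the least element $u$ and the greatest element $v$ of that interval. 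So the only point that needs forcing is $\rho([u,v])=[u,v]$.

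For the inclusion $\eta(L_{uv})\subseteq L_{uv}$ I would first reduce to the case $\eta(e_x)=e_x$ for all $x$: by the theorem following Lemma~\ref{lem:beta2} there is a unit $\beta$ with $\eta(e_x)=\psi_\beta(e_x)$ for all $x$, so $\eta'=\psi_\beta^{-1}\circ\eta$ is again a local automorphism inducing the identity on $FI(P)/Z$ and now fixing each $e_x$, and since $\psi_\beta(L_{uv})=L_{uv}$ it is enough to prove $\eta'(L_{uv})\subseteq L_{uv}$. By Lemma~\ref{lem:step4a}, $\eta'$ fixes every diagonal element, in particular the diagonal idempotent $q=\sum_{w\in[u,v]}e_w\in FI(P)$. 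Given $a\in L_{uv}$, I split off its diagonal part (which lies in $L_{uv}$ and is fixed by $\eta'$) to assume $a\in Z\cap L_{uv}$, and I put $b=a+q$. Choose $\varphi=\psi_f\circ M_\sigma\circ\widehat\rho\in\operatorname{Aut}(FI(P))$ with $\eta'(b)=\varphi(b)$. Reducing modulo $Z$ and using Proposition~\ref{prop:inducedauto}, the automorphism of $FI(P)/Z\cong\prod_{x\in P}Re_x$ induced by $\varphi$ is $\widehat\rho$, and it fixes the class of $b$, which — since $a\in Z$ — is precisely $\sum_{w\in[u,v]}e_w$. Hence $\rho([u,v])=[u,v]$, so by the previous paragraph $\widehat\rho(a)\in L_{uv}$ and $\widehat\rho(q)=q$. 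Since $M_\sigma$ fixes diagonals, this gives
\[
\eta'(b)=\varphi(a+q)=\psi_f\bigl(M_\sigma(\widehat\rho(a))\bigr)+\psi_f(q),
\]
and comparison with $\eta'(b)=\eta'(a)+q$ yields $\eta'(a)=\psi_f(M_\sigma(\widehat\rho(a)))+\bigl(\psi_f(q)-q\bigr)$. The first summand lies in $L_{uv}$ by the stability statements, while $\psi_f(q)-q\in L_{uv}$ follows from a short check that $q$ and $\psi_f(q)=f^{-1}qf$ share the same $(x,y)$-coordinate for every $u\le x\le y\le v$ (namely that of the identity of $FI(P)$). Therefore $\eta'(a)\in L_{uv}$, as desired.

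The hard part is exactly the control of the order-automorphism $\rho$ hidden in the witness $\varphi$: a priori $\varphi$ could shuffle $[u,v]$ with its complement, and nothing about $a$ by itself prevents this. The device that removes the difficulty is the perturbation $b=a+q$, whose diagonal class is the indicator function of $[u,v]$; since $\eta'$ fixes that class modulo $Z$, the permutation $\rho$ is forced to stabilize $[u,v]$ setwise and hence to fix $u$ and $v$.
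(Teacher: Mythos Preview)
Your argument is correct and follows essentially the same strategy as the paper: perturb $a$ by a diagonal ``indicator'' element so that, modulo $Z$, the witness automorphism $\varphi=\psi_f\circ M_\sigma\circ\widehat\rho$ is forced to satisfy $\rho([u,v])=[u,v]$. The paper uses the complementary indicator (it adds $d$ so that $(d+a)_x=0$ exactly on $[u,v]$ and $=1$ off it, then applies the special case directly to $d+a$), whereas you strip $a$ down to $Z$ and add $q=e_{[u,v]}$; this costs you the extra verification that $\psi_f(q)-q\in L_{uv}$, which you carry out correctly. One point where you are actually more careful than the paper: the invocation of Lemma~\ref{lem:step4a} requires $\eta(e_x)=e_x$, a hypothesis not present in the statement of Lemma~\ref{lem:step4b}; you handle this by the reduction $\eta'=\psi_\beta^{-1}\circ\eta$, while the paper tacitly relies on the running assumption from the end of Step~2.
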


\begin{proof}
First we show that $L_{uv}$ is a two-sided ideal.
Let $a \in L_{uv}$ and $\gamma\in FI(P)$. Then
\[
(\gamma a)_{xy}=\sum_{x\leq t \leq y}\gamma_{xt} a_{ty}
\]
for all $x\le y \in P$. If $u\leq x\leq y\leq v$, then $a_{ty}=0$ for all
$x\leq t \leq y$. Thus $(\gamma a)_{xy}=0$, and we infer $\gamma a \in L_{uv}$.
In a similar way, $a \gamma \in L_{uv}$ follows.

For $\eta(L_{uv}) \subseteq L_{uv}$, start with some $a \in L_{uv}$ such that
\begin{align} \label{eq:step4i}
a_x=0 \mbox{ if and only if } u \le x\le v.
\end{align}
Choose some $\varphi \in \operatorname{Aut}(FI(P))$ with $\eta(a)=\varphi(a)$.
With Theorem~\ref{thm:charaut} we have $\varphi = \psi_{f} \circ M_{\sigma} \circ \widehat \rho$.
By definition, $\psi_{f}(L_{uv})= f^{-1}L_{uv}f \subseteq L_{uv}$ and
$M_{\sigma}(L_{uv}) \subseteq L_{uv}$ are evident, and we only need to
show that $\widehat{\rho }(a)\in L_{uv}$. Note that $\eta$ but also both
$\psi_{f}$ and $M_{\sigma}$ induce the identity map on $FI(P)/Z$. Thus
\[
\sum_{x\in P} a_x e_{x} +Z=a+Z=\eta (a)+Z = \widehat{\rho} (a) +Z = \sum_{x\in P} a_x e_{\rho(x)} +Z= \sum_{x\in P} a_{\rho^{-1}(x)} e_x +Z,
\]
and $a_x=a_{\rho^{-1}(x)}$ follows for all $x\in P$. In particular, with \eqref{eq:step4i} we have
\[
u\le x \le v \iff a_x =0 \iff a_{\rho^{-1}(x)} =0 \iff u\le \rho^{-1}(x) \le v
\]
and
\[
u\le x \le y \le v \Longrightarrow u\le \rho^{-1}(x) \le \rho^{-1}(y) \le v \Longrightarrow a_{ \rho^{-1}(x) \rho^{-1}(y)}=0.
\]
Thus
\[
\widehat{\rho }(a) = \sum_{x\le y} a_{xy} e_{\rho(x)\rho(y)} = \sum_{x\le y} a_{\rho^{-1}(x) \rho^{-1}(y)} e_{xy} \in L_{uv}.
\]
This shows $\widehat{\rho }(a)\in L_{uv}$ and $\eta(L_{uv}) \subseteq L_{uv}$ under condition \eqref{eq:step4i}.

Now let $a$ be any element of $L_{uv}$. Note that
\[
d = \left(\sum_{x\in P}e_x - \sum_{u\le x\le v} e_x \right) -\sum_{x\in P} a_xe_x
\]
is a diagonal element in $L_{uv}$, and that
\[
d+a = \left(\sum_{x\in P}e_x - \sum_{u\le x\le v} e_x \right) +\sum_{x<y} a_{xy}e_{xy}\in L_{uv}
\]
satisfies condition \eqref{eq:step4i}. Thus $\eta(d+a) \in L_{uv}$, and with Lemma \ref{lem:step4a} also
\[
\eta(a)=\eta(d+a)-\eta(d)=\eta(d+a)-d \in L_{uv}.
\]
We infer that $\eta (L_{uv})\subseteq L_{uv}$ for all $u\leq v\in P$.
\end{proof}

We are all set to complete the proof of Theorem \ref{thm:mainlautoffip}.

\begin{theorem} \label{thm:step4finish}
Let $\eta \in \operatorname{LAut}(FI(P))$ induce the identity map on $FI(P)/Z$ with
$\eta( e_{xy}) = e_{xy}$ for all $x\le y \in P$. Then $\eta = \operatorname{id}_{FI(P)}$.
\end{theorem}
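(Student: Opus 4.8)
The plan is to reduce everything to a coordinate-wise statement and then exploit the ideals $L_{uv}$. Since $\eta$ induces the identity on $FI(P)/Z$, for every $a\in FI(P)$ we have $\eta(a)-a\in Z$, so the diagonal coordinates are automatically correct: $(\eta(a))_{xx}=a_{xx}$. Hence it suffices to fix arbitrary $u<v$ in $P$ and an arbitrary $a=\sum_{x\le y}a_{xy}e_{xy}\in FI(P)$, and to show $(\eta(a))_{uv}=a_{uv}$; once this is established for every such pair, $\eta(a)=a$ follows for all $a$, and thus $\eta=\operatorname{id}_{FI(P)}$.

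To isolate the coordinate $(u,v)$, I would split $a$ according to the interval $I=[u,v]=\{x\in P:u\le x\le v\}$, writing $a=a^{I}+(a-a^{I})$ with $a^{I}=\sum_{u\le x\le y\le v}a_{xy}e_{xy}$ the truncation of $a$ to $I$. The key point — and this is where the defining finiteness condition of $FI(P)$ enters — is that $a^{I}$ decomposes further as $a^{I}=d+z$ with $d=\sum_{u\le x\le v}a_{xx}e_{xx}$ diagonal and $z=\sum_{u\le x<y\le v}a_{xy}e_{xy}$ a \emph{finite} $R$-linear combination of the $e_{xy}$; finiteness of $z$ is exactly the requirement that $\{(x,y):u\le x<y\le v,\ a_{xy}\neq 0\}$ be finite. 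In particular $a^{I}\in FI(P)$, and hence $a-a^{I}\in FI(P)$ as well.

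Now $(a-a^{I})_{xy}=0$ for all $u\le x\le y\le v$, i.e.\ $a-a^{I}\in L_{uv}$, so Lemma~\ref{lem:step4b} yields $\eta(a-a^{I})\in L_{uv}$ and therefore $(\eta(a-a^{I}))_{uv}=0$. On the other hand, $\eta(d)=d$ by Lemma~\ref{lem:step4a}, while $\eta(z)=\sum_{u\le x<y\le v}a_{xy}\eta(e_{xy})=z$ because $z$ is a finite sum, $\eta$ is $R$-linear, and $\eta(e_{xy})=e_{xy}$ by hypothesis; hence $\eta(a^{I})=d+z=a^{I}$. Adding the two contributions, $(\eta(a))_{uv}=(\eta(a^{I}))_{uv}+(\eta(a-a^{I}))_{uv}=(a^{I})_{uv}+0=a_{uv}$, as required.

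The substantive work has already been carried out in Lemmas~\ref{lem:step4a} and~\ref{lem:step4b}, so the remaining argument is essentially bookkeeping; the one point I would be careful to verify is the legitimacy of the decomposition $a=a^{I}+(a-a^{I})$ inside $FI(P)$, i.e.\ that $a^{I}\in FI(P)$. This is precisely the step where the finitary hypothesis does its job: it forces the off-diagonal part $z$ of $a^{I}$ to be a finite sum, which is exactly what makes $R$-linearity together with $\eta(e_{xy})=e_{xy}$ strong enough to pin $\eta$ down on $a^{I}$, and hence on $a$ itself.
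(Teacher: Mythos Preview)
Your proof is correct and follows essentially the same approach as the paper: isolate the $(u,v)$-coordinate via the ideal $L_{uv}$ (Lemma~\ref{lem:step4b}), then handle the remaining interval part by splitting into a diagonal piece (Lemma~\ref{lem:step4a}) and a finite off-diagonal piece on which $\eta(e_{xy})=e_{xy}$ and $R$-linearity apply. The only cosmetic difference is the order of the decomposition---the paper pulls out the full diagonal $\sum_{x\in P}a_xe_x$ first and then the finite sum over $[u,v]$, whereas you first truncate to $a^{I}$ and then split $a^{I}=d+z$; your version is in fact slightly cleaner in that $a-a^{I}\in L_{uv}$ holds on the nose.
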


\begin{proof}
Let $a\in FI(P)$. For any $u\le v\in P$ holds
\[
a = \sum_{x\le y} a_{xy}e_{xy} \in \sum_{x \in P} a_{x}e_{x} + \sum_{u\le x\le y \le v} a_{xy}e_{xy} +L_{uv}.
\]
With Lemma \ref{lem:step4b}, we infer
\[
\eta(a) \in \eta\left( \sum_{x \in P} a_{x}e_{x} \right) + \eta\left(\sum_{u\le x\le y \le v} a_{xy}e_{xy}\right) +L_{uv}.
\]
Note that the first sum describes a diagonal element, while the second sum is finite. Thus, with Lemma \ref{lem:step4a},
\[
\eta(a) \in \sum_{x \in P} a_{x}e_{x} + \sum_{u\le x\le y \le v} a_{xy}\eta(e_{xy}) +L_{uv}
= \sum_{x \in P} a_{x}e_{x} + \sum_{u\le x\le y \le v} a_{xy}e_{xy} +L_{uv}.
\]
In particular, $(\eta(a))_{uv}=a_{uv}$. As this holds for all $u\le v\in P$, we infer $\eta(a)=a$.
\end{proof}

\end{document}